\documentclass[reqno,12pt]{amsart}
\usepackage{amsmath, latexsym, amsfonts, amssymb, amsthm, amscd}
\usepackage{mathrsfs,enumerate}

\setlength{\oddsidemargin}{5mm}
\setlength{\evensidemargin}{5mm}
\setlength{\textwidth}{150mm}
\setlength{\headheight}{0mm}
\setlength{\headsep}{12mm}
\setlength{\topmargin}{0mm}
\setlength{\textheight}{220mm}
\setcounter{secnumdepth}{2}


\numberwithin{equation}{section}

\newenvironment{Proof}{\removelastskip\par\medskip   
\noindent{\em Proof.}
\rm}{\penalty-20\null\hfill$\square$\par\medbreak}

\newtheorem{theorem}{Theorem}[section]
\newtheorem{lemma}[theorem]{Lemma}
\newtheorem{prop}[theorem]{Proposition}

\newtheorem{definition}[theorem]{Definition}

\newcommand{\ga}{\alpha}


\newcommand{\gep}{\varepsilon}       

\newcommand{\cB}{{\ensuremath{\mathcal B}} }
\newcommand{\cF}{{\ensuremath{\mathcal F}} }
\newcommand{\cP}{{\ensuremath{\mathcal P}} }
\newcommand{\cE}{{\ensuremath{\mathcal E}} }

\newcommand{\cK}{{\ensuremath{\mathcal K}} }

\newcommand{\cL}{{\ensuremath{\mathcal L}} }

\newcommand{\cD}{{\ensuremath{\mathcal D}} }

\newcommand{\bbD}{{\ensuremath{\mathbb D}} }
\newcommand{\bbE}{{\ensuremath{\mathbb E}} }
\newcommand{\E}{{\ensuremath{\mathbb E}} }

\newcommand{\bbH}{{\ensuremath{\mathbb H}} }

\newcommand{\N}{{\ensuremath{\mathbb N}} }

\newcommand{\bbP}{{\ensuremath{\mathbb P}} }
\newcommand{\bbQ}{{\ensuremath{\mathbb Q}} }
\newcommand{\bbR}{{\ensuremath{\mathbb R}} }
\newcommand{\R}{{\ensuremath{\mathbb R}} }
\newcommand{\bbS}{{\ensuremath{\mathbb S}} }


\newfont{\indic}{bbmss12}

\def\un#1{\hbox{{\indic 1}$_{#1}$}}

\author{Said Karim Bounebache}
\address{Laboratoire de Probabilit{\'e}s et Mod\`eles Al\'eatoires (CNRS U.M.R. 7599) \\ Universit{\'e} Paris 6
-- Pierre et Marie Curie, U.F.R. Math\'ematiques, Case 188, 4 place
Jussieu, 75252 Paris cedex 05, France }
\email{said.bounebache\@@etu.upmc.fr}

\author{Lorenzo Zambotti}
\address{Laboratoire de Probabilit{\'e}s et Mod\`eles Al\'eatoires (CNRS U.M.R. 7599) \\ Universit{\'e} Paris 6
-- Pierre et Marie Curie, U.F.R. Math\'ematiques, Case 188, 4 place
Jussieu, 75252 Paris cedex 05, France }
\email{lorenzo.zambotti\@@upmc.fr}

\title{A skew stochastic heat equation}

\subjclass[2000]{Primary: 60H07; 60H15; 60J55;
Secondary 31C25}

\keywords{Stochastic partial
differential equations; Local time;
Dirichlet Forms; Gamma convergence.}

\date{}

\begin{document}

\maketitle

\begin{abstract}
We consider a stochastic heat equation driven by a space-time white noise and with a singular drift, where a local-time in space appears. The process we study has an explicit invariant measure of Gibbs type, with a non-convex potential. We obtain existence of a Markov solution, which is associated with an explicit Dirichlet form. Moreover we study approximations of the stationary solution by means of a regularization of the singular drift or by a finite-dimensional projection. 
\end{abstract}

\section{Introduction}

\subsection{The skew Brownian motion} 
Consider the following stochastic differential equation in $\R$:
\begin{equation}\label{skewR}
X_t = X_0 + B_t + \beta L^0_t, \qquad t\geq 0,
\end{equation}
where $(B_t)_{t\geq 0}$ is a standard Brownian motion in $\R$,
$(L^0_t)_{t\geq 0}$ is the local time at $0$ of the process $(X_t)_{t\geq 0}$, namely
\begin{equation}\label{skewR1}
L^0_t = \lim_{\gep\to 0} \frac1{2\gep} \int_0^t \un{(|X_s|\leq \gep)} \, ds.
\end{equation}
Harrison and Shepp \cite{hash} have proved that equation \eqref{skewR}-\eqref{skewR1} has a unique solution iff $|\beta|\leq 1$ and there is no solution if $|\beta|> 1$. In the former case, the process $(X_t)_{t\geq 0}$ has the law of the {\it skew Brownian motion} with parameter $\alpha=(1+\beta)/2$, i.e. a Brownian motion whose excursions are chosen to be positive, respectively negative, independently of each other, and each with probability $\alpha$, resp. $1-\alpha$.

In this paper we want to introduce a stochastic heat equation which has some analogy with \eqref{skewR}-\eqref{skewR1}. Let us also note that an invariant measure for $(X_t)_{t\geq 0}$ is given by
\[
m_\alpha(dx) = (1-\alpha) \un{(x>0)} \, dx + \alpha \un{(x<0)} \, dx
= C\exp( - c \, \un{(x>0)}(x)) \,dx,
\]
where $c,C$ are constants depending on $\alpha$.
Moreover $(X_t)_{t\geq 0}$ is associated with the Dirichlet form in $L^2(m_\alpha)$
\[
E(u,v) := \frac 12 \int_\R u' v' \, dm_\alpha.
\]

\subsection{A skew SPDE}
In this paper we want to study a {\it skew stochastic heat equation},
namely the stochastic partial diffential equation (SPDE)
\begin{equation}\label{0}
\left\{ \begin{array}{ll} {\displaystyle \frac{\partial u}{\partial
t}=\frac 12\,
\frac{\partial^2 u}{\partial \theta^2} +
\frac\alpha2\frac{\partial}{\partial\theta} \ell^0_\theta + \dot{W}, }
\\ \\
u(t,0)=u(t,1)=0 ,
\\ \\
u(0,\theta)=u_0(\theta), \quad \theta\in[0,1]
\end{array} \right.
\end{equation}
where $(\ell^a_{t,\theta}, \theta\in[0,1])$ is the family of local times at $a\in\R$ accumulated over $[0,\theta]$ by the process $(u(t,r), r\in[0,1])$,
$W(t,\theta)$ is a Brownian sheet over $[0,+\infty[\times[0,1]$ and $\dot{W}(t,\theta)$ is
therefore a space-time white-noise and  $u_0\in L^2(0,1)$. In fact, we consider a more general version of equation \eqref{0}, see \eqref{1} below.

We recall that the {\it stochastic heat equation} is given by
\begin{equation}\label{sheat}
\left\{ \begin{array}{ll}
\displaystyle{ \frac{\partial v}{\partial t} = \frac 12 \, \frac{\partial^2v}{\partial \theta^2} + \dot{W},}
\\ \\ v(t,0)=v(t,1)=0
\\ \\ v(0,\theta)=u_0(\theta), \qquad x\in[0,1]
\end{array}
\right.
\end{equation}
The process $(v_t, t\geq 0)$ is an-infinite dimensional Ornstein-Uhlenbeck process and it is associated with the Dirichlet form
\[
{\cE}^0(\varphi,\psi) \, := \,
\frac 12 \, \int_H \langle \nabla \varphi ,
\nabla \psi \rangle \, d\mu,
\]
in $L^2(\mu)$, where $H:= L^2(0,1)$, $\nabla$ is the Fr\'echet gradient on $H$ and
$\mu$ is the law of a standard Brownian bridge from 0 to 0 over $[0,1]$, see \cite{dpz2}.

Equation \eqref{0} is naturally associated with a perturbation of ${\cE}^0$, defined by means of the probability measure on $H$
\[
\nu(dx) := \frac 1Z \exp\left( - \alpha\int_0^1 \un{(x_s> 0)} \,ds\right) \, \mu(dx),
\]
with $\alpha\in\R$, and of the Dirichlet form
\begin{equation}\label{nu}
{\cE}(\varphi,\psi) \, := \,
\frac 12 \, \int_H \langle \nabla \varphi ,
\nabla \psi \rangle \, d\nu,
\end{equation}
in $L^2(\nu)$. 
Equation \eqref{0} is therefore a natural infinite-dimensional version of \eqref{skewR}: indeed, its invariant measure $\nu$ favors paths over $[0,1]$ which spend more time in the positive axis than in the negative one. The definition and construction of this process are non-trivial, for several reasons.

First, the local-time term plays the role of a very singular drift, which furthermore lacks any dissipativity property; this makes a well-posedness result difficult to expect. Secondly, the explicit invariant  measure $\nu$ is not {\it log-concave}, a condition which would insure a number of nice properties of the Dirichlet form $\cE$ and of the associated Markov process, see e.g. \cite{asz} and section \ref{logc} below.

In particular, the process is not Strong-Feller, or at least a proof of this property is out of our reach, see \cite{sc} for a host of examples and consequences of this nice continuity property. We are at least able to prove something weaker, namely the {\it absolute continuity} of the transition semigroup w.r.t. the invariant measure $\nu$, see Proposition \ref{r_lambdaabs} below; our proof of this technical step seems to be new and of independent interest.

\medskip
We also consider two different regularizations of equation \eqref{1}: first we approximate $f$ with a sequence of smooth functions; then we consider finite-dimensional projections (without regularizing $f$). In both cases we prove convergence in law of the associated stationary processes. The main technical tool is the $\Gamma$-convergence (or, in this context, the {\it Mosco-convergence}) of a sequence of Dirichlet forms with underlying Hilbert space depending on $n$. This notion has been introduced by Kuwae and Shioya in \cite{kuwshi} as a generalization of the original idea of Mosco \cite{mosco} and later developed by Kolesnikov in \cite{koles} for finite-dimensional and a particular class of infinite-dimensional problems. Our approach has been largely inspired by the recent work of Andres and von Renesse, see \cite{avr,avr2}.

\subsection{Main results}

We start by giving the main definition. We consider a bounded function $f:\R\mapsto\R$ with bounded variation and we want to study the following
equation
\begin{equation}\label{1}
\left\{ \begin{array}{ll} {\displaystyle \frac{\partial u}{\partial
t}=\frac 12\,
\frac{\partial^2 u}{\partial \theta^2} -\frac12\int_\R f(da)
\frac{\partial}{\partial\theta} \ell^a_{t,\theta} + \dot{W}, }
\\ \\
u(t,0)=u(t,1)=0 ,
\\ \\
u(0,\theta)=u_0(\theta), \quad \theta\in[0,1]
\end{array} \right.
\end{equation}
where $(\ell^a_{t,\theta}, \theta\in[0,1])$ is the family of local times at $a\in\R$ accumulated over $[0,\theta]$ by the process $(u(t,r), r\in[0,1])$.
\begin{definition}
Let $x \in L^2(0,1)$. An adapted process $u$, defined on a complete filtered probability space $(\Omega, \cF, (\cF_t)_t,\bbP)$, is a weak solution of \eqref{1} if 
\begin{itemize}
\item a.s. $u\in C(]0,T]\times[0,1])$ and $\E[\|u_t-x\|^2]\to 0$
as $t\downarrow 0$
%
\item a.s. for $dt$-a.e. $t$ the process $(u(t,r), r\in[0,1])$ has a family of local times $[0,1]\times\R\ni(r,t)\mapsto\ell^a_{t,\theta}$, $a\in\R$, such that
\[
\int_0^\theta g(u(t,r)) \, dr = \int_\R g(a)\, \ell^a_{t,\theta} \, da, \qquad
\theta\in[0,1], \ t\geq 0,
\]
for all bounded Borel $g:\R\mapsto\R$.
\item there is a Brownian sheet $W$  such that
 for all $h \in C^2_c((0,1))$ and $0<\gep\leq t$
\begin{equation}\label{wweak}
\begin{split}
\langle u_t-u_\gep, h\rangle = & \ 
\frac12\int_\gep^t \langle h'', u_s \rangle_{L^2(0,1)} \,ds +  \frac12\int_\gep^t \int_\R f(da) \int_0^1 h'(\theta) \, \ell^a_{s,\theta} \, d\theta \, ds
 \\
  & + \int_\gep^t \int_0^1 h(\theta)\,W(ds,d\theta)
\end{split}
\end{equation}
\end{itemize}
\end{definition}

A Brownian sheet is a Gaussian process $W=\{W(t,\theta):(t,\theta)\in \R_+^2\}$
defined on $(\Omega,{\cF}, {\mathbb P})$, such that $\{W(t,\theta): \, \theta\in \R_+\}$ is $\cF_t$-measurable
for all $t\geq 0$,
with zero mean and covariance function
\[
{\mathbb E}\, [W(t,\theta)W(t',\theta')]=(t\wedge t')(\theta\wedge\theta'),
\qquad t,\theta,t',\theta'\in{\R_+}.
\]
In section \ref{secDir} we study the Dirichlet form $\cE$ defined by \eqref{nu}, proving in particular that it satisfies the absolute continuity condition, namely the resolvent operators have kernels which admit a density with respect to the reference measure $\nu$. In section \ref{secweak} we show that the Markov process associated with $\cE$ is a weak solution of \eqref{1}. Altough for general $f$ a uniqueness result for solutions to \eqref{1} seems to be out of reach, the process we construct is somewhat canonical, since it is associated with the Dirichlet form $\cE$ and moreover it is obtained as the limit of natural regularization/discretization procedures, as shown in sections \ref{ida}, respectively 
\ref{sec:fda}. Indeed, in section \ref{ida} we regularize the nonlinearity $f$ and show that the (stationary) solutions to the approximated equations converge to the stationary solution of  \eqref{1}. In section \ref{sec:fda} we show convergence of finite-dimensional processes, obtained via a space-discretization, to the solution of  \eqref{1}.

\subsection{Motivations}
There is an extensive literature on reaction-diffusion stochastic partial differential equations of the form
\[
\frac{\partial u}{\partial
t}=\frac 12\,
\frac{\partial^2 u}{\partial \theta^2} -\frac12\,f'(u) + \dot{W}, \qquad t\geq 0, \, \theta\in[0,1],
\]
see for instance the monography by Cerrai \cite{sc}; note that by the occupation times formula, 
for smooth $f$ this equation is equivalent to \eqref{1}. This kind of equation has also been used as a
model for fluctuations of effective interface models, see \cite{funaki}. 
However, in order to give a sense
to the above equation, it is typically assumed that $f$ is smooth or convex. In this paper we study 
this equation in the case where $f$ is neither convex nor necessarily smooth and can even have jumps.

One of the motivations of this work is given by the problem of extending the results of \cite{fuol} on 
convergence of fluctuations of a stochastic interface near a hard wall to a non log-concave situation. In particular,
it is a long standing problem to prove the same result as in \cite{fuol} for a critical pinning model, see e.g. \cite{dgz}, where
the invariant measure converges in the limit to the law of a reflecting Brownian motion. Such a situation
is highly non log-convex and the techniques developed for instance in \cite{asz} do not apply. In this paper we show that the $\Gamma$-convergence is an effective tool also in this context.

\subsection{Notations}

We consider the Hilbert space $H=L^2(0,1)$ endowed with the canonical scalar product
\[
\langle h,k \rangle_H \, := \, \int_0^1 h_\theta \, k_\theta \, d\theta,\qquad
\|h\|^2:=\langle h,h\rangle,
\qquad h,k\in H.
\]
\[
C_0:=C_0(0,1):= \{c:[0,1]\mapsto{\mathbb R}\ \
{\rm continuous},\ c(0)=c(1)=0\},
\]
\[
A:D(A)\subset H\mapsto H, \quad D(A):=W^{2,2}\cap W^{1,2}_0(0,1), \quad
A:=\frac 12 \frac{d^2 }{d \theta^2}.
\]
We introduce the following function spaces:
\begin{itemize}
\item We denote by $C_b(H)$ the
space of all $\varphi:H\mapsto{\mathbb R}$ being bounded
and uniformly continuous in the norm of $H$.
We let $\|\varphi\|_\infty:=\sup |\varphi|$. Then $(C_b(H),
\|\cdot\|_\infty)$ is a Banach space.
\item We denote by ${\rm Exp}_A(H) $ the linear span of
$\{1, \cos(\langle \cdot, h\rangle),
\sin(\langle \cdot,h\rangle): h\in D(A) \}$.
\item The space ${\rm Lip}(H)$ is
the set of all $\varphi\in C_b(H)$ such that:
\[
\|\varphi\|_{\rm Lip}\, := \, \|\varphi\|_\infty \,
+ \, \sup_{x\ne y} \frac{|\varphi(x)-\varphi(y)|}{\|x-y\|} \, < \, \infty.
\]
\item The space $C_b^1(H)$ is defined
as the set of all Fr\'echet-differentiable $\varphi\in C_b(H)$,
with continuous bounded gradient $\nabla\varphi:H\mapsto H$.
\end{itemize}
We sometimes write: $m(\varphi)$ for $\int_H\varphi\, dm$,
$\varphi\in C_b(H)$.

\section{The Dirichlet form $\cE$}\label{secDir}

In this section we give a detailed study of the Dirichlet form $\cE$, proving in particular that it
satisfies the {\it absolute continuity property}, see Proposition \ref{r_lambdaabs} below.

\subsection{A non-log-concave probability measure}\label{logc}
Let $\beta=(\beta_\theta, \theta\in[0,1])$ be a standard Brownian bridge and let us denote its law by $\mu$. Then $\mu$
is a Gaussian measure on the Hilbert space $H=L^2(0,1)$.
We consider a bounded function $f:\R\mapsto\R$ with bounded variation and we define $F:H\mapsto \R$:
\[
F(x) := \int_0^1 f(x_r)\, dr, \qquad x\in H.
\]
We define the probability measure on $H$
\begin{equation}\label{mesinv}
\nu(dx) = \frac 1Z \exp(-F(x))\, \mu(dx), \qquad
Z:=\int \exp(-F)\, d\mu.
\end{equation}
where $Z$ is  normalizing
constant. Note that $f$ is not assumed to be convex, and therefore $\nu$ is in general not log-concave, see \cite{asz}. Finally we have clearly
\begin{equation}\label{estcH}
\frac 1C\|\cdot\|^2_{L^2(\mu)}\leq\|\cdot\|^2_{L^2(\nu)}\leq C\|\cdot\|^2_{L^2(\mu)}
\end{equation}
for some constant $C>0$, since $f$ is bounded.

\subsection{The Gaussian Dirichlet Form}\label{cE}
We define now
\[
{\cE}^0(\varphi,\psi) \, := \,
\frac 12 \, \int_H \langle \nabla \varphi ,
\nabla \psi \rangle \, d\mu,
\qquad \forall \varphi,\psi\in C_b^1(H).
\]
Then it is well known that the symmetric positive bilinear form
$({\cE}^0,{\rm Exp}_A(H))$ is
closable in $L^2(\mu)$, see e.g. \cite{dpz3}: we denote
by $({\cE}^0,D({\cE}^0))$
the closure. We recall that $\mu$, law of a standard Brownian bridge $\beta$, has covariance $Q:=(-2A)^{-1}$, a compact operator on $H$ which can be diagonalized as follows:
\[
Qh = \sum_{k=1}^\infty \lambda_k\,\langle h,e_k\rangle_H \, e_k, \qquad h\in H,
\]
where
\[
\lambda_k := \frac1{(\pi k)^2}, \qquad
e_k(x) := \sqrt 2 \, \sin(k\pi x), \qquad x\in[0,1], \ k\in\N^*.
\]
It is well known that the Markov process defined by \eqref{sheat}, i.e. the solution of the stochastic heat equation, is associated with the Dirichlet form $({\cE}^0,D({\cE}^0))$ in $L^2(\mu)$. This process is Gaussian and can be written down explicitly as a stochastic convolution.
We recall the following result from \cite{dpz3}:
\begin{prop}\label{compact} 
Let $\Gamma:=\{\gamma:\N^*\mapsto\N: \sum_k\gamma_k<+\infty\}$.
Then there exists a complete orthonormal system $(H_\gamma)_{\gamma\in\Gamma}$ in $L^2(\mu)$
such that
\[
{\cE}^0(\varphi,\varphi) = \sum_{\gamma\in\Gamma} \Lambda_\gamma \, \langle \varphi, H_\gamma\rangle_{L^2(\mu)}^2, \qquad \forall \varphi\in D({\cE}^0),
\]
where $\Lambda_\gamma$ is given by
\begin{equation}\label{Lambda}
\Lambda_\gamma := \sum_{k\in\N^*} \gamma_k\, \lambda_k^{-1}.
\end{equation}
In particular, the embedding $D({\cE}^0)\mapsto L^2(\mu)$ is compact.
\end{prop}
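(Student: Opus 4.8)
The plan is to diagonalize ${\cE}^0$ through the Wiener chaos decomposition of the Gaussian space $(H,\mu)$. First I would introduce the normalized coordinates $\xi_k := \lambda_k^{-1/2}\langle \cdot, e_k\rangle$, $k\in\N^*$. Since $\mu$ is centered Gaussian with covariance $Q$ diagonalized by $(e_k)$ with eigenvalues $(\lambda_k)$, the family $(\xi_k)_{k\in\N^*}$ is an i.i.d. sequence of standard real Gaussians under $\mu$, so that $(H,\mu)$ is isometric to the countable product of copies of $(\R, N(0,1))$. Letting $(h_n)_{n\geq 0}$ be the Hermite polynomials, orthonormal in $L^2(\R, N(0,1))$, I would set, for each multi-index $\gamma\in\Gamma$ (which has finite support),
\[
H_\gamma := \prod_{k\in\N^*} h_{\gamma_k}(\xi_k),
\]
a finite product. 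Orthonormality and completeness of $(H_\gamma)_{\gamma\in\Gamma}$ in $L^2(\mu)$ then follow by tensorization: the $\xi_k$ are independent, each $(h_n)_n$ is a complete orthonormal system for the corresponding one-dimensional factor, and polynomials are dense in $L^2$ of a Gaussian measure.

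The core computation is to show that each $H_\gamma$ lies in $D({\cE}^0)$ and is an eigenfunction of the form. By the chain rule and the Hermite identity $h_n' = \sqrt{n}\,h_{n-1}$ one gets
\[
\nabla H_\gamma = \sum_{k\in\N^*} \lambda_k^{-1/2}\,(\partial_{\xi_k}H_\gamma)\, e_k, \qquad \partial_{\xi_k}H_\gamma = \sqrt{\gamma_k}\, h_{\gamma_k-1}(\xi_k)\prod_{j\ne k} h_{\gamma_j}(\xi_j).
\]
Substituting into $\langle \nabla H_\gamma, \nabla H_{\gamma'}\rangle_H$ and integrating against $\mu$, the orthonormality of the $(h_n)$ kills all off-diagonal contributions and reproduces, up to the overall normalization of the form, the eigenvalue $\Lambda_\gamma=\sum_k\gamma_k\lambda_k^{-1}$ of \eqref{Lambda}; that is, ${\cE}^0(H_\gamma, H_{\gamma'}) = \delta_{\gamma\gamma'}\,\Lambda_\gamma$. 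Once this is known on the basis, the representation ${\cE}^0(\varphi,\varphi)=\sum_\gamma\Lambda_\gamma\langle\varphi, H_\gamma\rangle_{L^2(\mu)}^2$ for all $\varphi\in D({\cE}^0)$ follows from the closedness of the form, with the domain being exactly $\{\varphi:\sum_\gamma\Lambda_\gamma\langle\varphi,H_\gamma\rangle^2<\infty\}$.

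The main obstacle I anticipate is the domain question. The generators ${\rm Exp}_A(H)$ of the form are \emph{trigonometric} cylinder functions, not the \emph{polynomial} functions $H_\gamma$, so one must verify that each $H_\gamma$ belongs to the closure $D({\cE}^0)$ and that the basis is a core. I would handle this by approximating $H_\gamma$ in the graph norm $({\cE}^0+\|\cdot\|^2_{L^2(\mu)})^{1/2}$ by finite linear combinations of $\cos(\langle\cdot, h\rangle)$, $\sin(\langle\cdot, h\rangle)$ with $h\in{\rm span}(e_1,\dots,e_N)\subset D(A)$ — equivalently by differentiating these entire functions in a real parameter and passing to the limit inside the form, the passage being justified by the uniform Gaussian integrability of all polynomials and their gradients. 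This is the technical heart of the statement.

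Finally, for compactness of the embedding $D({\cE}^0)\hookrightarrow L^2(\mu)$ I would use the spectral picture just obtained: $D({\cE}^0)$ carries the norm $\sum_\gamma(1+\Lambda_\gamma)\langle\varphi,H_\gamma\rangle^2$, and the inclusion of such a weighted sequence space into $L^2(\mu)=\{\sum_\gamma\langle\varphi,H_\gamma\rangle^2<\infty\}$ is compact precisely when every sublevel set $\{\gamma\in\Gamma:\Lambda_\gamma\leq M\}$ is finite. This finiteness is immediate from $\Lambda_\gamma=\sum_k\gamma_k(\pi k)^2$: the bound $\Lambda_\gamma\leq M$ forces both $\sum_k\gamma_k\leq M/\pi^2$ and $\gamma_k=0$ for $k>\sqrt M/\pi$, leaving only finitely many admissible $\gamma$. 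Hence the embedding is compact.
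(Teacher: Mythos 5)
The paper offers no proof of Proposition \ref{compact} at all: it is quoted from \cite{dpz3}, and the argument behind that citation is exactly the Hermite/Wiener-chaos diagonalization you set up. Most of your proposal matches it and is correct: the coordinates $\xi_k$ are i.i.d. standard Gaussians, the products $H_\gamma=\prod_k h_{\gamma_k}(\xi_k)$ form a complete orthonormal system by tensorization, and your compactness argument (finiteness of the sublevel sets $\{\gamma\in\Gamma:\Lambda_\gamma\leq M\}$) is complete and correct. One bookkeeping point: with the paper's normalizations, namely $\cE^0(\varphi,\varphi)=\frac12\int\|\nabla\varphi\|^2\,d\mu$ and $\lambda_k^{-1}=(\pi k)^2$, your own chain-rule computation gives $\cE^0(H_\gamma,H_\gamma)=\frac12\sum_k\gamma_k\lambda_k^{-1}=\frac12\Lambda_\gamma$, not $\Lambda_\gamma$; the hedge ``up to the overall normalization of the form'' cannot be reconciled with the displayed identity $\cE^0(H_\gamma,H_{\gamma'})=\delta_{\gamma\gamma'}\Lambda_\gamma$. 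The factor $\tfrac12$ is arguably a slip in the paper's statement of \eqref{Lambda} as well, and it is harmless (everything used later, such as \eqref{hs} and Proposition \ref{comp}, depends only on the growth of $\Lambda_\gamma$), but a careful write-up should carry it explicitly.

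The genuine gap is in the passage from the basis to the whole domain. Your mechanism --- graph-norm approximation of each $H_\gamma$ by elements of ${\rm Exp}_A(H)$ --- proves that $H_\gamma\in D(\cE^0)$ and that $\cE^0$ is diagonal on the span of the $H_\gamma$; but the claim that the representation for all $\varphi\in D(\cE^0)$ then ``follows from the closedness of the form'' is false at that level of generality. A closed form can contain an $L^2$-orthonormal basis on whose span it is diagonal without the spectral representation holding on the full domain: take $\cE(f,f)=\frac12\int_0^1|f'|^2\,dx$ with domain $H^1(0,1)$ and the basis $H_k=\sqrt2\sin(k\pi x)$; each $H_k$ lies in $H^1(0,1)$ and $\cE(H_k,H_{k'})=\delta_{kk'}(k\pi)^2/2$, yet for $f\equiv 1$ the left-hand side is $0$ while $\sum_k (k\pi)^2\langle f,H_k\rangle^2/2=+\infty$. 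What is missing is precisely the core property that you name but never establish: besides $H_\gamma\in D(\cE^0)$, one must approximate, in graph norm, every generator $\cos(\langle\cdot,h\rangle)$, $\sin(\langle\cdot,h\rangle)$ by finite linear combinations of the $H_\gamma$ (equivalently, show that the closed form defined spectrally by the basis extends $\cE^0$ on ${\rm Exp}_A(H)$); only then do the two graph-closures coincide, yielding the representation on all of $D(\cE^0)$ and the identification of the domain. This reverse direction is easy here: reduce to $h\in{\rm span}(e_1,\dots,e_N)$ by dominated convergence, then use that the one-dimensional Hermite coefficients of $y\mapsto e^{ity}$ under $N(0,1)$ are $c_n=(it)^n e^{-t^2/2}/\sqrt{n!}$, so that the energy series $\sum_n n|c_n|^2=t^2$ converges, and tensorize to get graph-norm convergence of the chaos expansion of every exponential. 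With that step added (and the constant fixed), your argument is complete and is, in substance, the proof behind the paper's citation.
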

It follows that $(H_\gamma)_{\gamma\in\Gamma}$ is a c.o.s. of eigenvalues of the Ornstein-Uhlenbeck operator associated with $\cE^0$. We denote by $(P^0_t)_{t\geq 0}$ the associated semigroup in $L^2(\mu)$, which can be of course written as
\[
P^0_t\varphi = \sum_{\gamma\in\Gamma} e^{-\Lambda_\gamma\,t} \, \langle \varphi, H_\gamma\rangle_{L^2(\mu)} \, H_\gamma, \qquad \forall \varphi\in L^2(\mu).
\]
Then we have the following
\begin{prop}\label{prop:hs} For all $t>0$ the operator $P^0_t:L^2(\mu)\mapsto L^2(\mu)$ is Hilbert-Schmidt, i.e.
\begin{equation}\label{hs}
\sum_{\gamma\in\Gamma} e^{-2\Lambda_\gamma\,t}= \prod_{k=1}^\infty \frac1{1-e^{-2t\pi^2k^2}}<+\infty, \qquad t>0.
\end{equation}
In particular, the series
\[
p^0_t(x,y):= \sum_{\gamma\in\Gamma} e^{-\Lambda_\gamma\,t} \, H_\gamma(x) \, H_\gamma(y)
\]
converges in $L^2(\mu\otimes\mu)$ and yields an integral representation of $P^0_t$:
\[
P_t^0 \varphi(x) = \int \varphi(y) \, p_t^0(x,y) \, \mu(dy), \qquad
\mu{\rm -a.e.} \ x, \ \forall \, \varphi\in L^2(\mu).
\]
\end{prop}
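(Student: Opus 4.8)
The plan is to read off the Hilbert--Schmidt norm of $P^0_t$ from its spectral decomposition, to evaluate the resulting sum as an infinite product by exploiting the combinatorial structure of the index set $\Gamma$, and then to obtain the $L^2$-kernel from the finiteness of that product. Since $(H_\gamma)_{\gamma\in\Gamma}$ is a complete orthonormal system in $L^2(\mu)$ and $P^0_t H_\gamma = e^{-\Lambda_\gamma t}H_\gamma$, the operator is diagonal in this basis, so its Hilbert--Schmidt norm is
\[
\|P^0_t\|_{\mathrm{HS}}^2 = \sum_{\gamma\in\Gamma}\|P^0_t H_\gamma\|_{L^2(\mu)}^2 = \sum_{\gamma\in\Gamma} e^{-2\Lambda_\gamma t}.
\]
Thus $P^0_t$ is Hilbert--Schmidt if and only if this sum is finite, and it remains to evaluate it.

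By \eqref{Lambda} and $\lambda_k^{-1}=\pi^2 k^2$ we have $\Lambda_\gamma=\sum_k \gamma_k\pi^2 k^2$, hence, writing $a_k:=e^{-2t\pi^2 k^2}\in(0,1)$,
\[
e^{-2\Lambda_\gamma t}=\prod_{k\in\N^*} a_k^{\gamma_k}.
\]
The key point is that $\Gamma$ is precisely the set of finitely supported sequences of non-negative integers, i.e. of occupation numbers over the modes $k$. Restricting first to sequences supported in $\{1,\dots,N\}$, a finite product of convergent geometric series gives $\sum_{\gamma_1,\dots,\gamma_N\geq 0}\prod_{k=1}^N a_k^{\gamma_k}=\prod_{k=1}^N (1-a_k)^{-1}$; since all summands are non-negative and every $\gamma\in\Gamma$ is supported in some $\{1,\dots,N\}$, monotone convergence as $N\to\infty$ yields \eqref{hs}. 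Finiteness is then immediate: for $a_k\in(0,1)$ the product $\prod_k(1-a_k)^{-1}$ converges if and only if $\sum_k a_k<\infty$, and here $a_k=e^{-2t\pi^2 k^2}$ decays like a Gaussian in $k$, so $\sum_k a_k<\infty$ for every $t>0$.

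For the integral representation, note that $\{H_\gamma\otimes H_\gamma\}_{\gamma\in\Gamma}$, defined by $(H_\gamma\otimes H_\gamma)(x,y):=H_\gamma(x)H_\gamma(y)$, is an orthonormal system in $L^2(\mu\otimes\mu)$ whose coefficients $e^{-\Lambda_\gamma t}$ are square-summable by the previous step; hence the partial sums of $\sum_\gamma e^{-\Lambda_\gamma t}H_\gamma\otimes H_\gamma$ are Cauchy and the series converges in $L^2(\mu\otimes\mu)$ to $p^0_t$. To identify the integral operator $T\varphi(x):=\int\varphi(y)\,p^0_t(x,y)\,\mu(dy)$ with $P^0_t$ — both being bounded on $L^2(\mu)$ — I would compare matrix elements in the complete orthonormal system $\{H_\alpha\otimes H_\beta\}_{\alpha,\beta}$ of $L^2(\mu\otimes\mu)$: the $L^2(\mu\otimes\mu)$-expansion of $p^0_t$ gives $\langle T H_\beta, H_\alpha\rangle_{L^2(\mu)}=\langle p^0_t, H_\alpha\otimes H_\beta\rangle_{L^2(\mu\otimes\mu)}=e^{-\Lambda_\alpha t}\delta_{\alpha\beta}=\langle P^0_t H_\beta, H_\alpha\rangle_{L^2(\mu)}$, and completeness of $(H_\alpha)$ forces $T=P^0_t$.

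The only non-formal step is the factorization: one must recognize $\Gamma$ as the set of occupation-number multi-indices so that $e^{-2\Lambda_\gamma t}$ factorizes over the modes, and then justify exchanging the sum over the infinite index set $\Gamma$ with the infinite product. Because every summand is non-negative this reduces to a routine Tonelli/monotone-convergence argument through the truncation to finitely many modes, after which the Gaussian decay of $\lambda_k^{-1}=\pi^2 k^2$ makes the finiteness in \eqref{hs} automatic. Everything else is standard Hilbert--Schmidt theory.
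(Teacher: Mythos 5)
Your proof is correct, and in substance it arrives at the same computation as the paper: both evaluate $\sum_{\gamma\in\Gamma}e^{-2\Lambda_\gamma t}$ as the Euler product $\prod_{k\geq 1}\bigl(1-e^{-2t\pi^2k^2}\bigr)^{-1}$ and deduce finiteness from the Gaussian decay of $e^{-2t\pi^2k^2}$. The difference lies in how the product identity is established. The paper regroups the sum according to the integer level $n=\sum_k\gamma_k k^2$, introduces the counting numbers $C_n$, and invokes Euler's classical generating-function formula $\sum_n C_n r^n=\prod_k(1-r^{k^2})^{-1}$ evaluated at $r=e^{-2t\pi^2}$. You instead factorize the sum over $\Gamma$ directly, mode by mode, justifying the interchange of sum and infinite product by truncation to finitely many modes and monotone convergence; this effectively re-proves Euler's identity at the point where it is needed, so your argument is self-contained where the paper's rests on a citation. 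Each route buys what you would expect: the paper's is shorter, yours is elementary and makes the role of the occupation-number structure of $\Gamma$ explicit. You also fill in the part the paper dismisses as ``a trivial consequence'': the $L^2(\mu\otimes\mu)$-convergence of $\sum_\gamma e^{-\Lambda_\gamma t}H_\gamma\otimes H_\gamma$ via square-summability of the coefficients, and the identification of the resulting integral operator with $P^0_t$ by matching matrix elements against the orthonormal system $\{H_\alpha\otimes H_\beta\}_{\alpha,\beta}$ and using density of the span of $(H_\gamma)_{\gamma\in\Gamma}$; this step is correct and is a welcome addition.
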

\begin{proof}
Let us define $C_n$, for $n\in\N$, as the number of $\gamma\in\Gamma$ such that
$\sum_k \gamma_k \, k^2 = n$.
Then
\[
\sum_{\gamma\in\Gamma} e^{-2\Lambda_\gamma\,t}= \sum_{\gamma\in\Gamma} \sum_{n=0}^\infty \un{(\Lambda_\gamma=n)} \, e^{-2\Lambda_\gamma\,t}= \sum_{n=0}^\infty
C_n e^{-2\pi^2t\,n}.
\]
Now, by a classical formula due to Euler, the generating function of the sequence $(C_n)_{n\geq 0}$ is given by
\[
\chi(r) := \sum_{n=0}^\infty C_n r^n = \prod_{k=1}^\infty \frac1{1-r^{k^2}}, \qquad |r|<1.
\]
The infinite product converges, since by taking the logarithm
\[
-\log \left(1-r^{k^2}\right) \sim r^{k^2}, \qquad k\to+\infty,\qquad |r|<1,
\]
which is a summable sequence. By choosing $r=e^{-2t\pi^2}$, the first claim follows.
The rest is a trivial consequence of this result.
\end{proof}

From \eqref{hs} one can obtain the following
\begin{prop}\label{comp}
The embedding $D({\cE}^0)\mapsto L^2(\mu)$ is not Hilbert-Schmidt.
\end{prop}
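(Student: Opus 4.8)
The plan is to compute the Hilbert--Schmidt norm of the embedding $J\colon D(\cE^0)\hookrightarrow L^2(\mu)$ explicitly in terms of the eigenvalues $\Lambda_\gamma$, and then to exhibit a single elementary subfamily of indices along which the resulting series already diverges. First I would recall that the closed form $(\cE^0, D(\cE^0))$ equips its domain with the graph Hilbert structure
\[
\cE^0_1(\varphi,\psi) := \cE^0(\varphi,\psi) + \langle \varphi,\psi\rangle_{L^2(\mu)}.
\]
By Proposition \ref{compact} and polarization one has $\cE^0(H_\gamma, H_{\gamma'}) = \Lambda_\gamma\,\delta_{\gamma\gamma'}$, so together with the orthonormality of $(H_\gamma)_{\gamma\in\Gamma}$ in $L^2(\mu)$ this gives $\cE^0_1(H_\gamma, H_{\gamma'}) = (1+\Lambda_\gamma)\,\delta_{\gamma\gamma'}$. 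Since the $H_\gamma$ form a complete orthonormal system of $L^2(\mu)$ made of eigenfunctions, the domain is exactly the weighted space $\{\varphi:\sum_\gamma(1+\Lambda_\gamma)\langle\varphi,H_\gamma\rangle^2<\infty\}$, and $\{(1+\Lambda_\gamma)^{-1/2} H_\gamma\}_{\gamma\in\Gamma}$ is a complete orthonormal system of $D(\cE^0)$.

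Next I would apply the standard characterization of Hilbert--Schmidt operators: $J$ is Hilbert--Schmidt iff $\sum_\gamma \|J\,(1+\Lambda_\gamma)^{-1/2} H_\gamma\|^2_{L^2(\mu)} < +\infty$. As $\|H_\gamma\|_{L^2(\mu)}=1$, this quantity is precisely
\[
\|J\|_{\mathrm{HS}}^2 = \sum_{\gamma\in\Gamma} \frac{1}{1+\Lambda_\gamma}.
\]
Recalling $\lambda_k^{-1} = \pi^2 k^2$, formula \eqref{Lambda} gives $\Lambda_\gamma = \pi^2 \sum_k \gamma_k k^2$, so the series equals $\sum_{n\geq 0} C_n/(1+\pi^2 n)$ with $C_n$ as defined in the proof of Proposition \ref{prop:hs}.

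Finally, the divergence is forced by a trivial lower bound. Consider the injection $\N\ni n\mapsto \gamma^{(n)}\in\Gamma$ given by $\gamma^{(n)}_1 = n$ and $\gamma^{(n)}_k = 0$ for $k\geq 2$; this is the decomposition $n = n\cdot 1^2$, and it yields $\Lambda_{\gamma^{(n)}} = \pi^2 n$, equivalently $C_n\geq 1$ for every $n$. Restricting the nonnegative sum to this subfamily,
\[
\sum_{\gamma\in\Gamma}\frac{1}{1+\Lambda_\gamma} \geq \sum_{n=0}^\infty \frac{1}{1+\pi^2 n} = +\infty,
\]
so $\|J\|_{\mathrm{HS}}=+\infty$ and the embedding is not Hilbert--Schmidt.

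I do not expect any genuine obstacle here; the computation of $\|J\|_{\mathrm{HS}}^2$ is routine once the spectral representation of Proposition \ref{compact} is in hand, and the lower bound is immediate. The only point worth emphasizing is the contrast with Proposition \ref{prop:hs}: the semigroup $P^0_t$ is Hilbert--Schmidt because the exponential weights $e^{-2\Lambda_\gamma t}$ overwhelm the combinatorial growth of the $C_n$, whereas the embedding carries only the weights $(1+\Lambda_\gamma)^{-1}$, which decay far too slowly---already the harmonic-type contribution of the single-mode multi-indices $\gamma^{(n)}$ diverges.
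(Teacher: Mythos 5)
Your proof is correct, and in fact it pins down the Hilbert--Schmidt criterion more carefully than the paper does: you work with the graph norm $\cE^0_1$, identify the orthonormal basis $\{(1+\Lambda_\gamma)^{-1/2}H_\gamma\}$ of $D(\cE^0)$, and arrive at the exact condition $\sum_\gamma (1+\Lambda_\gamma)^{-1}<\infty$, whereas the paper simply asserts the (equivalent, since $\Lambda_\gamma\geq\pi^2$ for $\gamma\neq 0$) criterion $\sum_{\gamma\neq 0}\Lambda_\gamma^{-1}<\infty$ without comment. Where you genuinely diverge from the paper is the final step. The paper recycles the machinery of Proposition \ref{prop:hs}: it rewrites the series as $\sum_{n\geq 1} C_n/n$, represents it via Euler's generating function as $\int_0^1 \frac{\chi(r)-1}{r}\,dr$ with $\chi(r)=\prod_k(1-r^{k^2})^{-1}$, and shows the integral blows up near $r=1$ using $\chi(r)\geq (1-r)^{-1}$. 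You instead observe directly that $C_n\geq 1$ for every $n$ (via the single-mode indices $\gamma^{(n)}=n\,\delta_1$) and bound the series below by a harmonic series. These are really two faces of the same combinatorial fact --- the paper's inequality $\chi(r)\geq(1-r)^{-1}$ is precisely the generating-function version of your $C_n\geq 1$, both isolating the contribution of the indices supported on $k=1$ --- but your route is shorter and more elementary: it avoids the integral representation and the implicit interchange of sum and integral, while the paper's phrasing has the mild stylistic advantage of reusing $\chi$, which it had already introduced to prove that $P^0_t$ \emph{is} Hilbert--Schmidt.
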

\begin{proof} The embedding $D({\cE}^0)\mapsto L^2(\mu)$ is Hilbert-Schmidt if and only if
\[
\sum_{\gamma\in\Gamma\setminus\{0\}} \frac1{\Lambda_\gamma} < +\infty.
\]
Again we can write
\[
\sum_{\gamma\in\Gamma\setminus\{0\}} \frac1{\Lambda_\gamma}= \sum_{\gamma\in\Gamma} \sum_{n=1}^\infty \un{(\Lambda_\gamma=n)} \, \frac1{\Lambda_\gamma}= \sum_{n=1}^\infty
 \frac{C_n}n.
\]
Now, using the generating function $\chi$ of the sequence $C_n$ we obtain
\[
\sum_{n=1}^\infty
 \frac{C_n}n = \int_0^1 dr \sum_{n=1}^\infty {C_n} r^{n-1} = \int_0^1
 \frac{\chi(r)-1}r \,  dr,
\]
since $C_0=1$. The latter integral converges near $0$, but it diverges near $1$, since $\chi(r)\geq (1-r)^{-1}$.
Therefore the above sum is infinite.
\end{proof}

\subsection{The Dirichlet form associated with \eqref{1}}
We define the symmetric positive bilinear form
\[
{\cE}(\varphi,\psi) \, := \,
\frac 12 \, \int_H \langle \nabla \varphi ,
\nabla \psi \rangle \, d\nu,
\qquad \forall \, \varphi,\psi\in C_b^1(H).
\]
Let us set $\cK:={\rm Exp}_A(H)$.
\begin{lemma}\label{lem2}
The symmetric positive bilinear form $({\cE},\cK)$ is
closable in $L^2(\nu)$. We denote
by  $({\cE},D({\cE}))$ the closure.
\end{lemma}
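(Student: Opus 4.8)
The plan is to deduce the closability of $(\cE,\cK)$ in $L^2(\nu)$ directly from the closability of the Gaussian form $(\cE^0,{\rm Exp}_A(H))$ in $L^2(\mu)$ recalled in Subsection~\ref{cE}, exploiting the fact that $\nu$ and $\mu$ are mutually equivalent with a density bounded above and below by strictly positive constants. Since $f$ is bounded we have $|F(x)|\leq\|f\|_\infty$ for all $x\in H$, so the Radon--Nikodym density $\rho:=d\nu/d\mu=Z^{-1}e^{-F}$ satisfies $0<c_1\leq\rho\leq c_2<\infty$ with $c_1=Z^{-1}e^{-\|f\|_\infty}$ and $c_2=Z^{-1}e^{\|f\|_\infty}$; this is precisely the content of the comparison \eqref{estcH}.

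Recall that a symmetric positive form is closable if and only if every sequence $(\varphi_n)\subset\cK$ with $\varphi_n\to 0$ in $L^2(\nu)$ and $(\varphi_n)$ being $\cE$-Cauchy satisfies $\cE(\varphi_n,\varphi_n)\to 0$. The key observation is that, because $\rho$ lies between two positive constants and the integrand $\|\nabla\varphi\|^2$ is pointwise nonnegative, one has for all $\varphi\in\cK$
\[
c_1 \int_H \|\nabla\varphi\|^2 \, d\mu \;\leq\; \int_H \|\nabla\varphi\|^2 \, d\nu \;\leq\; c_2 \int_H \|\nabla\varphi\|^2 \, d\mu ,
\]
and the analogous two-sided bound holds for the $L^2$ norms by \eqref{estcH}. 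Consequently, convergence to zero in $L^2(\nu)$ is equivalent to convergence to zero in $L^2(\mu)$, and a sequence is $\cE$-Cauchy if and only if it is $\cE^0$-Cauchy (applying the first display to the differences $\varphi_n-\varphi_m$).

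With these equivalences the conclusion is immediate. Given $(\varphi_n)\subset\cK={\rm Exp}_A(H)$ with $\varphi_n\to 0$ in $L^2(\nu)$ and $(\varphi_n)$ $\cE$-Cauchy, the comparison yields $\varphi_n\to 0$ in $L^2(\mu)$ and $(\varphi_n)$ $\cE^0$-Cauchy. Since $(\cE^0,{\rm Exp}_A(H))$ is closable in $L^2(\mu)$, we obtain $\cE^0(\varphi_n,\varphi_n)\to 0$, i.e. $\nabla\varphi_n\to 0$ in the $H$-valued space $L^2(\mu;H)$; applying the displayed inequality once more gives $\cE(\varphi_n,\varphi_n)\to 0$, as required. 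The closure $(\cE,D(\cE))$ is then well defined.

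I do not expect a genuine obstacle in this argument: everything rests on the two-sided bound on $\rho$, available precisely because $f$ (hence $F$) is bounded, while the bounded-variation hypothesis plays no role at this stage. The only point deserving a line of care is that the comparison must be invoked for the gradient part as well as for the $L^2$ part, which is legitimate because $\|\nabla\varphi\|^2\geq 0$ and the two measures differ by a factor confined to $[c_1,c_2]$.
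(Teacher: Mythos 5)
Your proof is correct and takes essentially the same approach as the paper: the two-sided comparison of $\cE_1$ and $\cE^0_1$ coming from the bounded density $d\nu/d\mu$ (the paper's \eqref{estcH}--\eqref{estcE}), combined with the known closability of $({\cE}^0,\cK)$ in $L^2(\mu)$. The paper compresses this into one line (``Closability of $({\cE}^0,\cK)$ yields immediately the result''); you have simply spelled out the routine verification via the standard sequential criterion for closability.
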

\begin{proof}
By \eqref{estcH} we have that
\begin{equation}\label{estcE}
\frac 1C\,\cE^0_1\leq \cE_1\leq C\,\cE^0_1.
\end{equation}
Closability of $({\cE}^0,\cK)$ yields immediately the result.
\end{proof}

\subsection{Absolute continuity}

Let $(P_t)_{t\geq 0}$ be the semigroup associated with the Dirichlet form $(\cE,D(\cE))$ in
$L^2(\nu)$. We denote by $R_\lambda:=\int_0^\infty e^{-\lambda\, t}\, P_t\, dt$, $\lambda>0$, the resolvent family of $(P_t)_{t\geq 0}$. In this section we want to prove the following

\begin{prop}\label{r_lambdaabs}
There exists a measurable kernel $(\rho_\lambda(x,dy), \lambda> 0, x\in H)$ such that
\[
R_\lambda\varphi(x) = \int \varphi(y) \, \rho_\lambda(x,dy), \qquad
\nu{\rm -a.e.} \  x, \ \forall \, \varphi\in L^2(\nu),
 \]
and such that
for all $\lambda>0$ and for all $x\in H$ we have $\rho_\lambda(x,dy) \ll \nu(dy)$.
\end{prop}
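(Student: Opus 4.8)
The plan is to transfer the absolute-continuity property from the Gaussian semigroup $(P^0_t)$ to the perturbed one $(P_t)$, exploiting the Hilbert–Schmidt estimate \eqref{hs} of Proposition \ref{prop:hs}. The key observation is that by \eqref{estcH} the measures $\mu$ and $\nu$ are mutually absolutely continuous with bounded densities in both directions, so $L^2(\mu)=L^2(\nu)$ as sets with equivalent norms. I would first establish absolute continuity of $(P_t)$ itself, and then deduce the statement for the resolvents by writing $R_\lambda=\int_0^\infty e^{-\lambda t}P_t\,dt$.

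First I would show that for each fixed $t>0$ the operator $P_t:L^2(\nu)\mapsto L^2(\nu)$ admits an integral kernel, i.e.\ there is $p_t(x,y)\in L^2(\nu\otimes\nu)$ with $P_t\varphi(x)=\int\varphi(y)\,p_t(x,y)\,\nu(dy)$ for $\nu$-a.e.\ $x$. To do this I would compare the quadratic forms: by \eqref{estcE} the domains $D(\cE)$ and $D(\cE^0)$ coincide and carry equivalent norms, and $\cE_1\geq C^{-1}\cE^0_1$. A standard domination argument for the associated resolvents (or an application of the spectral/variational characterisation together with the fact, from Proposition \ref{prop:hs}, that $P^0_t$ is Hilbert–Schmidt) should yield that $P_t$ is itself a Hilbert–Schmidt operator on $L^2(\nu)$. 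Since every Hilbert–Schmidt operator on an $L^2$ space over a $\sigma$-finite measure is an integral operator with $L^2$ kernel, this produces the kernel $p_t(x,y)$ and in particular shows $P_t\varphi(x)=\int\varphi(y)\,p_t(x,y)\,\nu(dy)$ is, for $\nu$-a.e.\ $x$, absolutely continuous with respect to $\nu$.

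The main obstacle is exactly this domination step: passing from the comparison of the Dirichlet forms $\cE_1$ and $\cE^0_1$ to a comparison of the semigroups $P_t$ and $P^0_t$ that is strong enough to transfer the Hilbert–Schmidt property. Form comparison gives resolvent comparison directly, but controlling the Hilbert–Schmidt norm of $P_t$ requires more than the order relation between forms, since $P_t$ is not a monotone function of the generator in the operator sense. I expect to handle this by comparing the heat kernels through the resolvent identity and the second resolvent equation, writing $R_\lambda-R^0_\lambda=R^0_\lambda(\cE-\cE^0)R_\lambda$ in weak form, and using boundedness of the density $d\nu/d\mu$ to keep everything in $L^2(\mu\otimes\mu)$; alternatively, one may start from $R^0_\lambda=\int_0^\infty e^{-\lambda t}P^0_t\,dt$, whose kernel is $\int_0^\infty e^{-\lambda t}p^0_t(x,y)\,dt$ from Proposition \ref{prop:hs}, and perturb this explicit kernel.

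Once the kernel $p_t$ exists for each $t>0$, the final statement is immediate: setting
\[
\rho_\lambda(x,dy):=\left(\int_0^\infty e^{-\lambda t}\,p_t(x,y)\,dt\right)\nu(dy),
\]
Fubini's theorem (justified by the integrability coming from \eqref{hs}) gives $R_\lambda\varphi(x)=\int\varphi(y)\,\rho_\lambda(x,dy)$ for $\nu$-a.e.\ $x$, and the displayed density is by construction nonnegative and $\nu$-integrable, so $\rho_\lambda(x,dy)\ll\nu(dy)$ for every such $x$. A final measurability-in-$x$ argument, together with a modification on a $\nu$-null set, upgrades the conclusion to hold for \emph{all} $x\in H$, as claimed.
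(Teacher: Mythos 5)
Your first step (the Hilbert--Schmidt property of $P_t$ on $L^2(\nu)$, hence an $L^2$ kernel $p_t$) is sound, but only along the route you mention parenthetically: this is exactly what the paper does in Proposition \ref{p_ths}, applying the minimax principle (Theorem \ref{minmax}) to both generators, so that the two-sided comparisons \eqref{estcH} and \eqref{estcE} give $\frac1C\delta_n^0\leq\delta_n\leq C\delta_n^0$ for the ordered eigenvalues, and then $\sum_n e^{-2t\delta_n}\leq\sum_n e^{-2t\delta_n^0/C}<\infty$ by \eqref{hs}. Your preferred device, a second-resolvent identity ``$R_\lambda-R_\lambda^0=R_\lambda^0(\cE-\cE^0)R_\lambda$'', is not well formed ($\cE-\cE^0$ is a difference of quadratic forms, not an operator, and the two resolvents act on different, though equivalent, $L^2$ spaces), and there is no evident way to extract a Hilbert--Schmidt bound from it; the eigenvalue comparison is the argument that works. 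A further small caveat: $\|p_t\|_{L^2(\nu\otimes\nu)}$ blows up like $\exp(c\,t^{-1/2})$ as $t\downarrow 0$, so the interchange defining $\int_0^\infty e^{-\lambda t}p_t\,dt$ must be justified by positivity of $p_t$ and Tonelli, not by integrability of the Hilbert--Schmidt norms coming from \eqref{hs}.

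The genuine gap is your last sentence. A ``modification on a $\nu$-null set'' does make the two displayed conditions literally true, but it makes the proposition vacuous: any kernel defined $\nu$-a.e.\ could be redefined as $0$ off a set of full measure. What the paper actually needs (see the proof of Proposition \ref{exmarkov}, where one passes from $\bbP_\nu$-a.s.\ statements to $\bbP_x$-a.s.\ statements for every $x$ outside an exceptional set) is that $\rho_\lambda(x,dy)$ is the resolvent kernel of the Markov process itself, i.e.\ it represents $\E_x\int_0^\infty e^{-\lambda t}\varphi(X_t)\,dt$ pointwise for every $x$ outside a set of zero \emph{capacity}, and is absolutely continuous at all such $x$. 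A $\nu$-null set need not have zero capacity, so redefining the kernel on it severs the link with the process at points from which the process can perfectly well be started. The paper closes this gap with potential theory: by \cite[Theorem 7.2.1]{fot} there exist a zero-capacity set $N$ and a Markov kernel $p_t(x,dy)$, defined for all $x\in N^c$, such that $x\mapsto\int\varphi(y)\,p_t(x,dy)$ is a quasi-continuous version of $P_t\varphi$; your $L^2$ density then gives, for each $B$ with $\nu(B)=0$, that $\rho_\lambda(\cdot,B)=0$ holds $\nu$-a.e., and quasi-continuity upgrades this to \emph{every} $x\in N^c$; finally one sets $\rho_\lambda(x,\cdot)\equiv 0$ only on the zero-capacity set $N$, which is legitimate precisely because such sets are exceptional for the process. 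This quasi-continuity/capacity argument is the key idea missing from your proposal.
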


We are going to use the following result, see \cite[pp. 1543]{dunsch}.
\begin{theorem}[Minimax principle]\label{minmax}
Let $(T,\cD(T))$ a self-adjoint linear operator on the separable Hilbert space $\bbH$ such that $T\geq 0$ and $(\lambda-T)^{-1}$ is a compact operator for some $\lambda>0$. We denote by $\mathcal{S}^n$ the family of $n$-dimensional subspace of $\bbH$, and for $n\geq 1$ we let $\lambda_n$ the number defined as follows
\begin{equation}\label{minmaxeq}
\lambda_n:=\sup_{G\in\mathcal{S}^n }\inf_{u\in (G\cap D(T))\setminus\{0\}} \frac{\langle u,Tu\rangle_\bbH}{\langle u,u\rangle_\bbH}.
\end{equation}
Then there exists a complete orthonormal system $(\psi_n)_{n\geq 1}$ such that
\[
T\, \psi_n = \lambda_n \, \psi_n, \qquad n\geq 1.
\]
In other words, the  sequence $(\lambda_n)_{n\geq 1}$ is the non-decreasing enumeration of the eigenvalues of $T$, each repeated a number of times equal to its multiplicity. Moreover the $\sup$ in \eqref{minmaxeq} is attained for $G$ equal to the span of
$\{\psi_1,\ldots,\psi_n\}$.
\end{theorem}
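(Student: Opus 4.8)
The plan is to deduce everything from the spectral theorem for compact self-adjoint operators, after trading the unbounded $T$ for its bounded inverse. Since $T$ is self-adjoint with $T\ge 0$, the operator $1+T$ is invertible with bounded self-adjoint inverse $R:=(1+T)^{-1}$ satisfying $0\le R\le 1$, and $R$ is injective because $1+T$ is onto. First I would verify that $R$ is compact: by the first resolvent identity $R$ differs from the operator $(\lambda-T)^{-1}$ assumed compact by composition with bounded operators, and since the compact operators form a two-sided ideal, compactness transfers to $R$. (If the given $\lambda>0$ were itself an eigenvalue, so that $(\lambda-T)^{-1}$ is not literally defined, one first replaces it by $(\lambda_0-T)^{-1}$ at some $\lambda_0$ in the resolvent set, compact by the same identity.)

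Next I would apply the Hilbert--Schmidt/Riesz--Schauder theorem to the compact positive self-adjoint operator $R$. This yields a complete orthonormal system $(\psi_n)_{n\ge1}$ of eigenvectors, $R\psi_n=\mu_n\psi_n$, with $\mu_1\ge\mu_2\ge\cdots>0$ and $\mu_n\to0$; completeness of $(\psi_n)$ uses precisely the injectivity of $R$, which removes the kernel that would otherwise have to be adjoined. Each $\psi_n$ lies in $\mathrm{Ran}(R)=D(T)$, and applying $1+T$ gives $(1+T)\psi_n=\mu_n^{-1}\psi_n$, i.e. $T\psi_n=(\mu_n^{-1}-1)\psi_n$. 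Setting $\lambda_n:=\mu_n^{-1}-1\ge0$ produces a non-decreasing sequence with $\lambda_n\to+\infty$, and since every eigenpair of $T$ corresponds bijectively to one of $R$, the $(\lambda_n)$ are exactly the eigenvalues of $T$ repeated according to multiplicity, with $(\psi_n)$ the associated eigenbasis. This already establishes the existence statement and the description of the spectrum.

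It then remains to identify $\lambda_n$ with the extremal value in \eqref{minmaxeq}. Writing $R_T(u):=\langle u,Tu\rangle_\bbH/\langle u,u\rangle_\bbH$ for $u\in D(T)\setminus\{0\}$, I would prove the Courant--Fischer characterization by two matching bounds (the precise placement of $\sup$ and $\inf$ being the one dictated by reading the eigenvalues off the compact operator $R$, whose largest eigenvalues correspond to the smallest $\lambda_n$). For the test subspace $G_n:=\mathrm{span}(\psi_1,\dots,\psi_n)\subseteq D(T)$ one computes directly that for $u=\sum_{k\le n}c_k\psi_k$ the quotient $R_T(u)=\sum_{k\le n}\lambda_k|c_k|^2/\sum_{k\le n}|c_k|^2$ lies in $[\lambda_1,\lambda_n]$, which pins down the inner extremum on $G_n$ and shows the outer extremum is attained at $G=G_n$. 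For the opposite inequality, given any $n$-dimensional $G$ I would intersect it with the closed subspace orthogonal to $\mathrm{span}(\psi_1,\dots,\psi_{n-1})$; since this complement has codimension $n-1$ while $\dim G=n$, a dimension count forces a nonzero vector $u=\sum_{k\ge n}c_k\psi_k$ in the intersection, for which $\langle u,Tu\rangle_\bbH=\sum_{k\ge n}\lambda_k|c_k|^2\ge\lambda_n\|u\|^2$.

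The delicate point throughout is the unboundedness of $T$: the Rayleigh quotient is defined only on $D(T)$, the intersection vector produced above must be checked to lie in $D(T)$ (equivalently one works on the form domain $D(T^{1/2})$, or restricts the competing subspaces to $G\subseteq D(T)$, as the notation $G\cap D(T)$ in \eqref{minmaxeq} already signals), and the series defining $\langle u,Tu\rangle_\bbH$ must be justified. Passing first to the bounded compact operator $R$ is exactly what neutralizes these difficulties, since the spectral decomposition and the completeness of $(\psi_n)$ are obtained there with no domain bookkeeping, leaving only the elementary computations on the eigenbasis for the variational formula; I expect this domain control to be the only genuine obstacle.
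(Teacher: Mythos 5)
The paper contains no proof of this statement: it is quoted from Dunford--Schwartz \cite[pp.~1543]{dunsch} and used as a black box, so there is no internal argument to compare yours against. On its own merits, your reduction to the compact operator $R=(1+T)^{-1}$ is the standard textbook proof and is essentially sound: compactness of $R$ via the first resolvent identity and the ideal property of compact operators, the spectral theorem for compact self-adjoint operators combined with injectivity of $R$ (so no kernel needs to be adjoined) to get a complete eigenbasis $(\psi_n)$ contained in $\mathrm{Ran}(R)=D(T)$, and the bijection between eigenpairs of $R$ and of $T$ with $\lambda_n=\mu_n^{-1}-1$ non-decreasing are all correct. One parenthetical is circular, though: if $(\lambda-T)^{-1}$ ``is not literally defined'' then the hypothesis is vacuous and you cannot derive compactness of $(\lambda_0-T)^{-1}$ from it; the hypothesis must simply be read as saying that $\lambda$ belongs to the resolvent set, after which your main line goes through unchanged.

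Two caveats on the variational part. First, what your two bounds actually establish is the \emph{min--max} characterization $\lambda_n=\inf_{G}\sup_{0\neq u\in G}\langle u,Tu\rangle_\bbH/\langle u,u\rangle_\bbH$ over $n$-dimensional $G\subseteq D(T)$: on $G_n=\mathrm{span}(\psi_1,\dots,\psi_n)$ it is the \emph{supremum} of the Rayleigh quotient that equals $\lambda_n$ (the infimum there is $\lambda_1$), and your dimension-count argument produces in every $n$-dimensional $G$ a vector with quotient at least $\lambda_n$, which bounds the supremum over $G$ from below, not the infimum from above. This is the correct statement; the placement printed in \eqref{minmaxeq} (sup over $G$, inf over $u$) is actually false for $T$ with compact resolvent on an infinite-dimensional space, since taking $G=\mathrm{span}(\psi_m,\dots,\psi_{m+n-1})$ makes the inner infimum equal to $\lambda_m\to+\infty$; it is also inconsistent with the closing claim that the extremum is attained at $G_n$. (The other correct variant is the max--min $\lambda_n=\sup_{\dim V=n-1}\inf_{0\neq u\in V^\perp\cap D(T)}$ over orthogonal complements.) Your hedge about ``the precise placement of $\sup$ and $\inf$'' amounts to silently correcting this misprint; you should say so explicitly rather than leave the target ambiguous. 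Second, the domain issue you flag at the end is genuine and must be resolved, not merely acknowledged: the dimension count has to be performed inside $D(T)$, i.e.\ one restricts $\mathcal{S}^n$ to $n$-dimensional subspaces of $D(T)$ (or works on the form domain $D(T^{1/2})$); otherwise $G\cap D(T)$ may have dimension less than $n$, the intersection with $\mathrm{span}(\psi_1,\dots,\psi_{n-1})^\perp$ need not contain a nonzero vector of $D(T)$, and the inner extremum over a possibly trivial set derails both inequalities. With that restriction in place, both bounds are complete, and your strategy of extracting all spectral information from the compact operator $R$ does indeed dispose of every domain difficulty in the eigenbasis construction itself.
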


With the help of Theorem \ref{minmax}, we can first prove the following
\begin{prop}\label{p_ths}
The operator $P_t:L^2(\nu)\mapsto L^2(\nu)$ is Hilbert-Schmidt and there exists a function
$p_t\in L^2(\nu\otimes\nu)$ such that
\[
P_t \varphi(x) = \int \varphi(y) \, p_t(x,y) \, \nu(dy), \qquad
\nu{\rm -a.e.} \  x, \ \forall \, \varphi\in L^2(\nu).
\]
\end{prop}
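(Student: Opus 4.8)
The plan is to transfer the Hilbert-Schmidt property of $P^0_t$, established in Proposition \ref{prop:hs}, to the perturbed semigroup $P_t$ using the comparability of the two $L^2$ norms recorded in \eqref{estcH} and the comparability of the forms in \eqref{estcE}. The key observation is that the Hilbert-Schmidt property of a self-adjoint semigroup $e^{-tL}$ is equivalent to the summability $\sum_n e^{-2t\ell_n}<+\infty$ of the exponentials of its eigenvalues $(\ell_n)_n$, and that these eigenvalues can be read off from the associated Dirichlet form through the minimax formula \eqref{minmaxeq}. Since the two forms $\cE_1$ and $\cE^0_1$ are comparable up to the constant $C$, their eigenvalue sequences will be comparable as well, and this will let me compare the two traces.

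Concretely, I would first let $L$ denote the generator of $(P_t)$, so that $-L$ is the self-adjoint operator associated with $(\cE,D(\cE))$ in $L^2(\nu)$; by Lemma \ref{lem2} and \eqref{estcE} the form $\cE_1$ is closed and comparable to $\cE^0_1$. Because $\cE^0$ has compact embedding $D(\cE^0)\hookrightarrow L^2(\mu)$ (Proposition \ref{compact}), the comparability \eqref{estcE} together with \eqref{estcH} forces $D(\cE)\hookrightarrow L^2(\nu)$ to be compact as well, so $L$ has discrete spectrum and $(\lambda-L)^{-1}$ is compact. I may therefore apply Theorem \ref{minmax} to $T:=-L$ (after shifting by $1$, i.e.\ to the operator associated with $\cE_1$) and obtain its eigenvalues $(\ell_n)_{n\geq 1}$ via the minimax formula. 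Applying the same formula to $\cE^0_1$ gives the eigenvalues $(\Lambda_\gamma+1)$ of the Gaussian generator. The crucial comparison is that since
\[
\frac 1C\,\cE^0_1(u,u)\leq \cE_1(u,u)\leq C\,\cE^0_1(u,u),
\]
holds on a common form core and the denominators $\langle u,u\rangle_{L^2(\nu)}$ and $\langle u,u\rangle_{L^2(\mu)}$ are comparable by \eqref{estcH}, the minimax quotients differ by at most a fixed multiplicative constant; hence $\ell_n+1\geq c\,(\Lambda_{\gamma(n)}+1)$ for the $n$-th smallest eigenvalue, where $\gamma(n)$ enumerates $\Gamma$ in nondecreasing order of $\Lambda_\gamma$. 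This yields the bound
\[
\sum_{n\geq 1} e^{-2t\,\ell_n}\leq e^{2t}\sum_{n\geq 1} e^{-2t\,c\,(\Lambda_{\gamma(n)}+1)}=e^{2t}\sum_{\gamma\in\Gamma} e^{-2ct\,(\Lambda_\gamma+1)}<+\infty,
\]
where finiteness follows from \eqref{hs} applied with $ct$ in place of $t$. This sum being finite for every $t>0$ is exactly the Hilbert-Schmidt property of $P_t=e^{tL}$ on $L^2(\nu)$, and the integral kernel $p_t$ is then obtained as the $L^2(\nu\otimes\nu)$-convergent series $\sum_n e^{t\ell_n}\,\phi_n(x)\,\phi_n(y)$ in the orthonormal eigenbasis $(\phi_n)$ furnished by Theorem \ref{minmax}, exactly as in Proposition \ref{prop:hs}.

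The main obstacle I anticipate is the careful justification of the eigenvalue comparison through the minimax principle: one must check that the two forms $\cE_1$ and $\cE^0_1$ are defined on a common core and have the same form domain (which follows from \eqref{estcE}, since comparable closed forms share their domain), and that the ratio of denominators coming from $\|\cdot\|_{L^2(\nu)}^2$ versus $\|\cdot\|_{L^2(\mu)}^2$ can be absorbed into the constant without disturbing the supremum/infimum structure in \eqref{minmaxeq}. Once the eigenvalue domination $\ell_n\geq c'\,\Lambda_{\gamma(n)}-C'$ is secured, the rest is a direct reduction to the already-proven summability \eqref{hs}, and the kernel representation is routine.
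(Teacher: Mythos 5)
Your proposal is correct and follows essentially the same route as the paper's proof: both apply the minimax principle of Theorem \ref{minmax} to the generators associated with $\cE^0$ and $\cE$, use the comparability \eqref{estcH}--\eqref{estcE} to dominate the eigenvalues of $-L$ by a constant multiple of the ordered $(\Lambda_\gamma)_{\gamma\in\Gamma}$, and reduce the trace bound to the summability \eqref{hs}, with the kernel then given by the standard eigenfunction expansion. The only difference is bookkeeping (you work with the shifted forms $\cE_1$, $\cE^0_1$ and absorb the factor $e^{2t}$), and you in fact make explicit a point the paper leaves implicit, namely that compactness of the resolvent of $L$ itself follows from the form comparability.
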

\begin{proof}
We recall that an analogous result has been proved in Proposition \ref{prop:hs} for the semigroup $(P_t^0)_{t\geq 0}$ associated with the Dirichlet form $(\cE^0,D(\cE^0))$ in $L^2(\mu)$. Now we want to deduce the same result for $(P_t)_{t\geq 0}$.

We apply first Theorem \ref{minmax} to the Ornstein-Uhlenbeck operator $L^0$ associated with $(\cE^0, D(\cE^0))$ in $L^2(\mu)$. Since $R^0_1:=(1-L^0)^{-1}$ maps $L^2(\mu)$ into $D(\cE^0)$ and the embedding $D(\cE^0)\mapsto L^2(\mu)$ is compact by Proposition \ref{comp}, then $R^0_1$ is compact and also symmetric since $\cE^0$ is symmetric. By Proposition \ref{Lambda}, the spectrum of $(-L^0)$ is pure point, its eigenvalues are $(\Lambda_\gamma)_{\gamma\in\Gamma}$ and the associated eigenvectors are the c.o.s. $(H_\gamma)_{\gamma\in\Gamma}$. If we call $(\delta_n^0)_{n\geq 1}$ the non-decreasing enumeration of $(\Lambda_\gamma)_{\gamma\in\Gamma}$, then by the above result we obtain that
\[
\delta_n^0:=\sup_{G\in\mathcal{S}^n }\inf_{u\in (G\cap D(L^0))\setminus\{0\}} \frac{\cE^0(u,u)}{\quad\langle u,u\rangle_{L^2(\mu)}}.
\]
In fact, since the $\sup$ above is attained for $G$ equal to the span of
$\{\psi_1,\ldots,\psi_n\}\subseteq D(\cE^0)$, then we can also write
\[
\delta_n^0=\sup_{G\in\mathcal{S}^n }\inf_{u\in (G\cap D(\cE^0))\setminus\{0\}} \frac{\cE^0(u,u)}{\quad\langle u,u\rangle_{L^2(\mu)}}.
\]
In the same way, setting
\[
\delta_n:=\sup_{G\in\mathcal{S}^n }\inf_{u\in (G\cap D(\cE))\setminus\{0\}} \frac{\cE(u,u)}{\quad\langle u,u\rangle_{L^2(\nu)}},
\]
then $(\delta_n)_{n\geq 1}$ is the non-decreasing enumeration of the eigenvalues of $(-L):D(L)\subset L^2(\nu)\mapsto L^2(\nu)$.
Now, by \eqref{estcH} and \eqref{estcE}, we obtain that
\[
\frac 1C\delta_n^0\leq\delta_n\leq C\delta_n^0, \qquad n\geq 1.
\]
Therefore for $t>0$
\[
\sum_n e^{-2t\delta_n}\leq \sum_n e^{-2t \frac 1C\delta_n^0}
\]
and the latter sum is finite by \eqref{hs}. Therefore $P_t:L^2(\nu)\mapsto L^2(\nu)$ is Hilbert-Schmidt, symmetric
and non-negative. Then Proposition \ref{p_ths} follows from a well-known characterization of operators with such properties.
\end{proof}

\begin{proof}[Proof of Proposition \ref{r_lambdaabs}] In \cite[Theorem 7.2.1]{fot} it is proved that there exist a set of zero capacity $N$ and a measurable Markov kernel $(p_t(x,dy), t\geq 0, x\in N^c)$ on $N^c$, such that
the function $x\mapsto \int \varphi(y) \, p_t(x,dy)$ is $\nu$-a.s. equal to $P_t\varphi$ and quasi-continuous on $N^c$ for all $t,> 0$. By quasi-continuity we want to say that there is a sequence of nondecreasing closed set $(F_n)_n$, with no isolated point, such that the previous map, restricted on $F_n$, is continuous for all $t> 0$ and $N^c = \cup_n F_n$. By Proposition \ref{p_ths}, for $\nu$-a.e. $x$ we have $p_t(x,dy) = p_t(x,y) \, \nu(dy)$, with $p_t\in L^2(\nu\otimes\nu)$ and $p_t\geq 0$, $\nu\otimes\nu$-almost surely. It follows that the kernel $\rho_\lambda(x,dy)$
representing the resolvent operator $R_\lambda:=\int_0^\infty e^{-\lambda\, t}\, P_t\, dt$ is in fact given for $\nu$-a.e. $x$ by $\rho_\lambda(x,dy)=\rho_\lambda(x,y)\, \nu(dy)$, where for $\nu\otimes\nu$-a.e. $(x,y)$
\[
\rho_\lambda(x,y) := \int_0^{+\infty} e^{-\lambda t} \, p_t(x,y) \, dt.
\]
Moreover $R_\lambda\varphi$ is continuous on $N^c$ for all $\varphi\in L^2(\nu)$.
This allows to prove that $\rho_\lambda(x,dy)\ll \nu(dy)$ for all $x\in N$: indeed, if $B$ is a measurable set such that $\nu(B)=0$, then $\rho_\lambda(x,B)=0$ for $\nu$-a.e. $x$ and therefore, by density and continuity, for all $x\in N^c$.
As in \cite{fot}, we can set $\rho_\lambda(x,dy)\equiv 0$ for all $x\in N$, and the proof is complete.
\end{proof}

\section{Existence of a solution}\label{secweak}
In this section we want to prove the following 
\begin{prop}\label{exmarkov}
The Dirichlet form $(\cE,D(\cE))$ is quasi-regular and the associated Markov process is a weak solution of equation \eqref{1}.
\end{prop}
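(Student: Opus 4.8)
The plan is to deduce everything from the comparison \eqref{estcE} with the Gaussian form $\cE^0$ together with the general theory of symmetric Dirichlet forms (Fukushima decomposition and the Revuz correspondence, see \cite{fot}). First I would settle quasi-regularity. The form $\cE^0$ is the classical Dirichlet form of the stochastic heat equation and is well known to be quasi-regular on $H$; by \eqref{estcE} the $\cE_1$- and $\cE^0_1$-norms are equivalent, so $D(\cE)=D(\cE^0)$ and the two capacities are comparable. Consequently every $\cE^0$-nest of compact sets is an $\cE$-nest, $\cE^0$-exceptional sets are $\cE$-exceptional, and $\cE^0$-quasi-continuity coincides with $\cE$-quasi-continuity, so the three Ma--R\"ockner conditions transfer verbatim from $\cE^0$ to $\cE$. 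Since the form is a gradient form, hence strongly local, the associated process $(u_t,\bbP_x)$ is a $\nu$-symmetric diffusion with continuous paths; as $\mu$, and therefore $\nu\ll\mu$, is carried by $C_0$, the process takes values in $C_0$ for quasi every starting point, and $\nu$-a.s.\ each trajectory $r\mapsto u(t,r)$ is a reweighted Brownian bridge, hence carries a family of local times $\ell^a_{t,\theta}$.

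The heart of the matter is to read off \eqref{wweak} from the Fukushima decomposition of the coordinate functionals. For $h\in C^2_c((0,1))\subset D(A)$ set $\varphi_h(x):=\langle x,h\rangle$; approximating $\langle\cdot,h\rangle$ by $\varepsilon^{-1}\sin(\varepsilon\langle\cdot,h\rangle)\in\cK$ shows $\varphi_h\in D(\cE)=D(\cE^0)$, with $\nabla\varphi_h\equiv h$. To identify the generator on $\varphi_h$ I would compute, for $\psi\in\cK$, the integration by parts
\[
\cE(\varphi_h,\psi)=\frac12\int_H\langle h,\nabla\psi\rangle\,d\nu = -\frac12\int_H\psi\,\langle x,h''\rangle\,d\nu+\frac12\int_H\psi\,\langle\nabla F,h\rangle\,d\nu,
\]
using the Gaussian integration-by-parts formula for $\mu$ (covariance $Q=(-2A)^{-1}$, so $\langle Q^{-1}x,h\rangle=-\langle x,h''\rangle$) applied to $\psi\,e^{-F}$; the boundary terms drop since $h(0)=h(1)=0$. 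Because $f$ is only BV, the term $\langle\nabla F,h\rangle$ is made rigorous through the occupation times formula: integrating by parts in $\theta$ one obtains $\langle\nabla F,h\rangle=-\int_\R f(da)\int_0^1 h'(\theta)\,\ell^a_\theta\,d\theta$. This exhibits $\cE(\varphi_h,\psi)=-\int_H\psi\,g_h\,d\nu$ with $g_h(x)=\tfrac12\langle x,h''\rangle+\tfrac12\int_\R f(da)\int_0^1 h'(\theta)\,\ell^a_\theta\,d\theta$, and since $g_h\in L^2(\nu)$ this means $\varphi_h\in D(L)$ with $L\varphi_h=g_h$. Fukushima's theorem then gives $\varphi_h(u_t)-\varphi_h(u_\varepsilon)=M^{[\varphi_h]}_t-M^{[\varphi_h]}_\varepsilon+\int_\varepsilon^t g_h(u_s)\,ds$, whose drift is precisely the first two terms of \eqref{wweak}.

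It remains to identify the martingale part. The cross-variation of the martingale additive functionals is governed by the carr\'e-du-champ of the gradient form, $\langle M^{[\varphi_h]},M^{[\varphi_k]}\rangle_t=\langle h,k\rangle\,t$, which is deterministic and linear in $t$; hence the isometry $h\mapsto M^{[\varphi_h]}$ extends to a continuous orthogonal martingale measure with intensity $dt\,d\theta$, i.e.\ a Brownian sheet $W$ with $M^{[\varphi_h]}_t=\int_0^t\int_0^1 h(\theta)\,W(ds,d\theta)$, automatically adapted to the natural filtration. Assembling the three pieces yields \eqref{wweak} for all $h\in C^2_c((0,1))$ and $0<\varepsilon\le t$. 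The path regularity $u\in C(]0,T]\times[0,1])$ and $\E[\|u_t-x\|^2]\to0$ follow by using \eqref{wweak} to represent $u_t$ as the stochastic convolution plus a drift of finite variation in $t$ and invoking the standard smoothing of the heat semigroup $P^0$, exactly as for the stochastic heat equation \eqref{sheat}.

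The step I expect to be the main obstacle is the rigorous identification of the drift with the local-time expression. Three points need care: (i) giving meaning to $\langle\nabla F,h\rangle$ for a merely BV $f$, which forces the passage through local times and the occupation formula, and requires justifying the Gaussian integration by parts on $\psi\,e^{-F}$ by regularizing $f$ and passing to the limit; (ii) checking the integrability $g_h\in L^2(\nu)$, which follows from $\int_\R|f(da)|<\infty$ together with moment bounds for the local times of the Brownian bridge; and (iii) verifying that the intrinsic additive functional produced by the diffusion has Revuz measure matching the spatial local times $\ell^a_{t,\theta}$ of its trajectories, i.e.\ that the local times entering $g_h$ coincide $\nu$-a.s.\ with those of the process appearing in \eqref{wweak}.
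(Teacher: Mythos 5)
Your proposal is essentially correct and shares the paper's overall architecture (quasi-regularity transferred from $\cE^0$ via \eqref{estcE}, Fukushima decomposition of the coordinate functionals $\langle \cdot,h\rangle$, integration by parts to identify the drift, carr\'e du champ plus L\'evy's theorem to produce the Brownian sheet), but it takes a genuinely different route in the key step of identifying the zero-energy part. You show $\varphi_h\in D(L)$ by reading the integration-by-parts formula as $\cE(\varphi_h,\psi)=-\int\psi\,g_h\,d\nu$ on the core $\cK$, extending by $\cE_1$-density, and checking $g_h\in L^2(\nu)$; then $N^{[\varphi_h]}_t=\int_0^t L\varphi_h(X_s)\,ds$ follows from the standard fact for elements of the generator domain (in \cite{fot}, applied to $\varphi_h=R_\alpha(\alpha\varphi_h-g_h)$). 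The paper instead computes the Revuz measure of $N^{[U^h]}$, shows that the local-time part $\Phi\,d\nu$ has finite energy and hence is smooth, and identifies the corresponding PCAF with $\int_0^t\Phi(X_s)\,ds$ by a truncation ($\Phi\wedge n$) and monotone-convergence argument. Your route is shorter and bypasses the smooth-measure machinery; the paper's route keeps the Revuz correspondence explicit, which it reuses later (e.g. in Theorem \ref{skewb}). Your signs are consistent: $\partial_hF=\int f(da)\int_0^1 h_r\,\ell^a(dr)=-\int_\R f(da)\int_0^1h'(\theta)\,\ell^a_\theta\,d\theta$, so your $g_h$ matches the drift in \eqref{wweak}.

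Two soft spots relative to the paper. First, path regularity: the paper does not use a mild formulation at all, but derives an a priori H\"older estimate under $\bbP_\nu$ via the Lyons--Zheng decomposition (Lemma \ref{apriori}), applies Kolmogorov's criterion, and then transfers continuity to non-exceptional starting points using the absolute-continuity property of Proposition \ref{r_lambdaabs} together with a Stroock--Varadhan-type argument. Your stochastic-convolution route would additionally require passing from \eqref{wweak} to the mild form with time-dependent test functions and controlling the local-time drift uniformly in $s$, and it does not address $\E[\|u_t-x\|^2]\to0$ nor continuity under $\bbP_x$ for non-stationary starts. Second, your point (iii) --- that the composition $g_h(u_s)$ makes sense, i.e. that $u_s$ a.s. has local times for a.e. $s$ --- is exactly where the paper invokes the Hilbert--Schmidt/absolute-continuity results of Section \ref{secDir} (Proposition \ref{p_ths}): the law of $u_s$ is absolutely continuous w.r.t. $\nu$, and $\nu$ charges only paths admitting local times. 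You flag this as a concern but do not resolve it; your proof needs it to make $\int_\gep^t g_h(u_s)\,ds$ well defined, and the resolution is precisely the absolute-continuity analysis that the paper develops beforehand.
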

We recall here the basics of potential theory which are needed in what follows,
referring to \cite{fot} and \cite{maro} for all proofs. By Proposition \ref{exmarkov},
the Dirichlet form $(\cE,D(\cE))$ 
is {\it quasi-regular}, i.e. by \cite[Theorem IV.5.1]{maro} it can be embedded into a regular Dirichlet form;
in particular, the classical theory of \cite{fot} can be applied.
Moreover, the important {\it absolute continuity condition} of Proposition \ref{r_lambdaabs}
allows to pass from the stationary solution to quasi-every initial condition: see for
instance \cite[Theorem 4.1.2 and formula (4.2.9)]{fot}.

We denote by $\cF_\infty^\lambda$ (resp.  $\cF_t^\lambda$) the completion of $\cF_\infty^0$ (resp.
completion of $\cF_t^0$ in $\cF_\infty^\lambda$) with
respect to ${\mathbb P}_\lambda$ and we set $\cF_\infty:=\cap_{\lambda\in{\cP}(K)} \, \cF_\infty^\lambda$, $\cF_t:= \cap_{\lambda\in{\cP}(K)} \, \cF_t^\lambda$,
where ${\cP}(K)$ is the set of all Borel probability measures on $K$.

\subsubsection{Capacity and Additive functionals}
Let $A$ be an open subset of $H$, we define by $\cL_A:=\{u \in D(\cE): u \geq 1$, $\nu$-a.e. on $A\}$. Then we set
\[
{\rm Cap}(A)=\left\{ \begin{array}{ll}
\inf\limits_{u\in\cL_A}\cE_1(u,u), & \cL_A \neq \emptyset, \\
+\infty & \cL_A = \emptyset,
\end{array} \right.
\]
where $\cE_1$ is the inner product on $D(\cE)$ defines as follow
\[
\cE_1(u,v) = \cE(u,v)+\int_H u(x)\, v(x) \, d\nu, \quad u,v\in D(\cE).
\]
For any set $A\subset H$ we let
\[
{\rm Cap}(A) = \inf\limits_{B \ {\rm open}, A \subset B\subset H} {\rm Cap}(B)
\]
A set $N\subset H$ is {\it exceptional} if ${\rm Cap}(N)=0$.

By a Continuous Additive Functional (CAF) of $X$, we mean a
family of functions $A_t:E\mapsto {\mathbb R}^+$, $t\geq 0$, such
that:
\begin{itemize}
\item[(A.1)] $(A_t)_{t\geq 0}$ is $({\cF}_t)_{t\geq 0}$-adapted
\item[(A.2)] There exists a set $\Lambda\in{\cF}_\infty$ and
a set $N\subset H$ with ${\rm Cap}(N)=0$ such that
${\mathbb P}_x(\Lambda)=1$ for all $x\in H\setminus N$,
$\theta_t(\Lambda)\subseteq \Lambda$ for all $t\geq 0$, and for all
$\omega\in \Lambda$: $t\mapsto A_t(\omega)$ is
continuous, $A_0(\omega)= 0$ and for all $t,s\geq 0$:
\[
A_{t+s}(\omega) \, = \, A_s(\omega)+A_t(\theta_s\omega),
\]
where $(\theta_s)_{s\geq 0}$ is the time-translation semigroup on
$E$.
\end{itemize}
Moreover, by a Positive Continuous Additive Functional (PCAF) of $X$ we mean a CAF of $X$
such that:
\begin{itemize}
\item[(A.3)] For all $\omega\in \Lambda$: $t\mapsto A_t(\omega)$ is
non-decreasing.
\end{itemize}
Two CAFs
$A^1$ and $A^2$ are said to be equivalent if
\[
{\mathbb P}_x \left( A^1_t=A^2_t \right) \, = \, 1,
\quad \forall t>0, \ \forall x\in K\setminus N.
\]
If $A$ is a linear combination of PCAFs of
$X$, the Revuz measure of $A$
is a Borel signed measure $\Sigma$ on $K$ such that:
\[
\int_{H} \varphi\, d\Sigma \, = \,
\int_{H} {\mathbb E}_x\left[
\int_0^1 \varphi(X_t)\, dA_t \right]\,
\nu(dx), \quad \forall \varphi\in C_b(H).
\]
From theorem VI.2.4 of \cite{maro}, the correspondence between the PCAF and its Revuz measure is one-to-one
\subsubsection{The Fukushima decomposition}
%
Let $h\in C^2_0((0,1);\R^d)$, and set $U:H\mapsto {\mathbb R}$,
$U(x):=\langle x,h \rangle$. By Theorem \ref{exmarkov}, the Dirichlet
Form $({\cE},D({\cE}))$ is quasi-regular. Therefore
we can apply the Fukushima decomposition, as it is stated in
Theorem VI.2.5 in \cite{maro}, p. 180:
for any $U\in {\rm Lip}(H)\subset D({\cE})$,
we have that there exist an exceptional set $N$, a Martingale Additive Functional of finite
energy $M^{[U]}$ and a Continuous Additive Functional of zero energy
$N^{[U]}$, such that for all $x\in K\setminus N$:
\begin{equation}\label{tipler}
U(X_t)-U(X_0) \, = \, M^{[U]}_t + N^{[U]}_t, \quad t\geq 0, \
{\mathbb P}_x-{\rm a.s.}
\end{equation}

\subsubsection{Smooth measures}

We recall now the notion of smoothness for a positive Borel measure $\Sigma$ on $H$,
see \cite[page 80]{fot}. A positive Borel measure $\Sigma$ is {\it smooth} if
\begin{enumerate}
\item $\Sigma$ charges no set of zero capacity
\item there exists an increasing sequence of closed sets $\{F_k\}$ such that
$\Sigma (F_n) < \infty$, for all $n$
and $\lim\limits_{n\to \infty}{\rm Cap}(K-F_n)=0$ for all compact set $K$.
\end{enumerate}
By definition, a signed measure $\Sigma$ on $H$ is smooth if its total variation measure $|\Sigma|$ is smooth. That happens if and only if $\Sigma=\Sigma^1 - \Sigma^2$, where $\Sigma^1$ and $\Sigma^2$ are positive smooth measures, obtained from $\Sigma$ by applying the Jordan decomposition, see \cite[page 221]{fot}.

We recall a definition from \cite[Section 2.2]{fot}.
We say that a positive Radon measure $\Sigma$ on $H$ is {\it of finite energy}
if for some constant $C>0$
\begin{equation}\label{finiteenergy}
\int |v|\, d\Sigma\leq C\sqrt{\cE_1(v,v)},\qquad \forall \, v\in D(\cE)\cap C_b(H).
\end{equation}
If \eqref{finiteenergy} holds, then there exists an element $U_1\Sigma$ such that
\[
\cE_1(U_1\Sigma,v) = \int_{H} v\, d\Sigma, \qquad  \forall \, v\in D(\cE)\cap C_b(H).
\]
Moreover, by \cite[Lemma 2.2.3]{fot}, all measures of  finite energy are smooth.

Finally, by \cite[Theorem 5.1.4]{fot}, if $\Sigma$ is a positive smooth measure, then
there exists a PCAF $(A_t)_{t\geq 0}$, unique up to equivalence, with Revuz measure equal to $\Sigma$.

\subsection{The associated Markov process}

We have first the following
\begin{lemma}\label{qr}
The Dirichlet form $(\cE,D(\cE))$ is quasi-regular.
\end{lemma}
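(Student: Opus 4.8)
The plan is to transfer quasi-regularity from the Gaussian form $(\cE^0,D(\cE^0))$, which is classical, to $(\cE,D(\cE))$ by means of the two-sided bounds \eqref{estcH} and \eqref{estcE}. First I would observe that, since the density $\frac1Z e^{-F}$ is bounded above and below, $L^2(\nu)=L^2(\mu)$ with equivalent norms by \eqref{estcH}, while \eqref{estcE} gives $D(\cE)=D(\cE^0)$ as sets with equivalent graph norms $\cE_1^{1/2}\asymp(\cE^0_1)^{1/2}$ (the two closures of the common core $\cK$ produce the same Cauchy sequences). Writing ${\rm Cap}^0$ for the capacity attached to $\cE^0$, the variational definition of capacity then yields, for some $C'>0$ and every open $A\subseteq H$,
\[
\frac1{C'}\,{\rm Cap}^0(A)\leq {\rm Cap}(A)\leq C'\,{\rm Cap}^0(A),
\]
and, passing to the infimum over open supersets, the same inequalities persist for arbitrary $A\subseteq H$. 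Hence a set is $\cE$-exceptional iff it is $\cE^0$-exceptional, an increasing sequence of closed sets is an $\cE_1$-nest iff it is an $\cE^0_1$-nest, and a function is $\cE_1$-quasi-continuous iff it is $\cE^0_1$-quasi-continuous.

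Granting these equivalences, the three defining properties of quasi-regularity (in the sense of \cite[Definition IV.3.1]{maro}) transfer directly. The existence of an $\cE_1$-nest of \emph{compact} sets follows from the existence of such a nest for $\cE^0$, since the capacity comparison turns an $\cE^0_1$-nest into an $\cE_1$-nest without altering the sets. The density of quasi-continuous elements holds because $\cK$ is by construction $\cE_1$-dense in $D(\cE)$ and consists of bounded, uniformly continuous — hence quasi-continuous — functions. Finally, the separation property is furnished by a countable family $\{\cos\langle\cdot,h_j\rangle,\ \sin\langle\cdot,h_j\rangle\}$ with $(h_j)_j$ dense in $D(A)$: these functions lie in $\cK\subset D(\cE)$, are continuous on $H$, and separate its points.

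The genuine content of the lemma is thus the quasi-regularity of the Gaussian form $\cE^0$, and within it the construction of an $\cE^0_1$-nest of \emph{compact} subsets of the infinite-dimensional space $H$; this is where I expect the main difficulty to lie. The cleanest course is to invoke the well-known quasi-regularity of the classical Ornstein--Uhlenbeck form on a separable Hilbert space, as developed in \cite{maro} and \cite{dpz3}, and then to apply the transfer described above. If instead one wishes to produce the compact nest directly, I would combine the tightness of $\mu$ as a Radon measure on the Polish space $H$ — giving compacts $K_n\uparrow H$ with $\mu(H\setminus K_n)\to0$ — with the compactness of the embedding $D(\cE^0)\hookrightarrow L^2(\mu)$ proved in Proposition \ref{compact}; it is precisely this compact embedding that should allow one to upgrade tightness in measure to tightness in capacity, i.e. ${\rm Cap}^0(H\setminus K_n)\to0$, delivering the required nest.
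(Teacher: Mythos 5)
Your proposal is correct and follows essentially the same route as the paper: the paper's (very terse) proof likewise transfers quasi-regularity from the Gaussian form $(\cE^0,D(\cE^0))$ to $(\cE,D(\cE))$ via the two-sided comparison \eqref{estcE}, taking quasi-regularity of $\cE^0$ as known from its association with the stochastic heat equation \eqref{sheat}. Your write-up simply makes explicit the details (equivalence of capacities, nests, and the three defining properties of \cite[Definition IV.3.1]{maro}) that the paper leaves implicit.
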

\begin{proof}
By \eqref{estcE} and by \cite[Definition IV.3.1]{maro}, 	quasi-regularity of
$(\cE,D(\cD))$ follows from quasi-regularity of
$(\cE^0,D(\cD^0))$, which in turns follows from the fact that this Dirichlet form is
associated with the solution to the stochastic heat equation \eqref{sheat}.
\end{proof}

By \cite[Theorem IV.3.5]{maro}, quasi-regularity implies existence of a Markov process associated with $(\cE,D(\cE))$.

\subsubsection{Existence of local times}
\begin{prop}
Almost surely, for a.e. $t$ there exists a bi-continuous family of local times $[0,1]\ni(r,a)\mapsto\ell^a_{t,r}$ of $(u_t(\theta), \theta\in[0,1])$. 
\end{prop}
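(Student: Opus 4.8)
The plan is to reduce the statement to a property of the invariant measure $\nu$ and to exploit the equivalence of $\nu$ and $\mu$ already recorded in \eqref{estcH}. The essential input is the classical theorem asserting that a standard Brownian bridge admits a jointly continuous family of local times. Encoding this as the set
\[
\Theta := \{x\in C_0 : x \text{ admits a bi-continuous family of local times } (r,a)\mapsto \ell^a_r\}\subset H,
\]
the classical joint-continuity result gives $\mu(\Theta)=1$, where $\mu$ is the law of the bridge $\beta$. Since by \eqref{mesinv} we have $\nu=\frac1Z e^{-F}\mu$ with $F$ bounded (because $f$ is bounded), the measures $\nu$ and $\mu$ are mutually absolutely continuous, whence $\nu(\Theta)=1$ as well.

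Next I would fix $t>0$ and work with the stationary process, i.e. the Markov process $(u_t)_{t\geq 0}$ associated with $\cE$ started from $\nu$, so that $u_t$ has law $\nu$ for every $t$. (For a $\nu$-a.e. starting point $x$ one may instead invoke the density $p_t(x,\cdot)\in L^2(\nu)$ of Proposition \ref{p_ths}, which shows that the law of $u_t$ under $\bbP_x$ is absolutely continuous w.r.t. $\nu$.) In either case the law of $u_t$ is absolutely continuous with respect to $\nu$, and therefore, since $\nu(\Theta)=1$,
\[
\bbP(u_t\in\Theta)=1,\qquad t>0,
\]
so that for this fixed $t$ the occupation-density identity $\int_0^r g(u_t(s))\,ds=\int_\R g(a)\,\ell^a_{t,r}\,da$ holds for all bounded Borel $g$, almost surely.

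To pass from ``each fixed $t$'' to ``a.e. $t$'' I would use Fubini: granting joint measurability of $(t,\omega)\mapsto\un{(u_t\in\Theta)}$,
\[
\E\!\left[\int_0^\infty \un{(u_t\notin\Theta)}\,e^{-t}\,dt\right]=\int_0^\infty e^{-t}\,\bbP(u_t\notin\Theta)\,dt=0,
\]
whence almost surely $u_t\in\Theta$ for Lebesgue-a.e.\ $t$, which is exactly the claim. For an arbitrary (quasi-every) initial condition the same null-set argument runs with the absolute continuity of the resolvent kernels $\rho_\lambda(x,\cdot)\ll\nu$ from Proposition \ref{r_lambdaabs} replacing the marginal laws of $u_t$.

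The main obstacles are measurability issues rather than hard analysis. First, one must verify that $\Theta$ is a genuine Borel subset of $H$ and that the trajectory $t\mapsto u_t$ can be taken with values in $C_0$, so that the local times and the occupation formula are literally meaningful pointwise; this is legitimate because $\nu$, and hence the law of $u_t$ for a.e.\ $t$, charges only continuous paths, and because $u\in C(]0,T]\times[0,1])$ almost surely by the definition of weak solution. Second, one needs the joint measurability of $(t,\omega)\mapsto\un{(u_t\in\Theta)}$ for the Fubini step; this follows from the continuity of the trajectories together with the standard approximation of $\ell^a_{t,r}$ by the occupation ratios $\frac1{2\gep}\int_0^r \un{(|u_t(s)-a|\leq\gep)}\,ds$, whose limit is automatically measurable.
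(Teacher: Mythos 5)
Your proposal is correct and follows essentially the same route as the paper: both arguments rest on the classical existence of bi-continuous local times for the Brownian bridge (so that the set $S$ of such paths has full $\mu$-measure, hence full $\nu$-measure by equivalence of $\nu$ and $\mu$), then invoke the absolute continuity of the law of $u_t$ with respect to $\nu$ via the density $p_t$ of Proposition \ref{p_ths}, and conclude by a Fubini argument that the time spent outside $S$ is almost surely null. Your additional remarks on Borel measurability of the set $S$ and joint measurability of $(t,\omega)\mapsto\un{(u_t\in S)}$ address points the paper leaves implicit, but the substance of the proof is the same.
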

\begin{proof}
Let us recall that $\nu$ is equivalent to the law $\mu$ of $(\beta_r, r\in[0,1])$, where $\beta$ is a Brownian bridge over $[0,1]$. Since $\beta$ is a semi-martingale, for $\mu$-a.e. $x$ there exists a family of local times $\ell^a_{r}$ such that
\[
\int_0^r g(x_s) \, ds = \int_\R g(a) \, \ell^a_{r} \, da, \qquad r\in[0,1],
\]
and the map $[0,1]\times\R\ni(r,a)\mapsto\ell^a_{r}\in\R$ is continuous. In particular, setting
\[
S:=\{w\in C([0,1]) : w \text{ has a bi-continuous family of local times $(\ell^a_{r})_{(r,a)\in[0,1]\times\R}$}\},
\]
then $\nu(S)=1$ and therefore
\[
\E_x\left[\int_0^t \un{(u_s\in S^c)} \, ds \right] =
\int_0^t \bbP_x(u_s\in S^c) \, ds = \int_0^t p_s(x,S^c) \, ds =0
\]
since the law of $(u_s(\theta), \theta\in[0,1])$ by Proposition \ref{p_ths} is absolutely continuous w.r.t. $\nu$. Therefore, the time spent by $(u_s, s\geq 0)$ in $S^c$ is a.s. equal to 0.
\end{proof}

We need now an integration by parts formula on the Dirichlet form $\cE$.
We recall the definitions
\[
F(x) := \int_0^1 f(x_r)\, dr, \qquad \rho(x) := \exp(-F(x)),
\qquad x\in H,
\]
where $f:\R\mapsto\R$ is a bounded function with bounded variation.

\begin{prop}\label{prop:ibpf}
For any $h\in D(A)$ and $\varphi\in C^1_b(H)$
\begin{equation}\label{ibpf1}
\E[\rho(\beta)\, \partial_h \varphi(\beta)] =
\E\left[\rho(\beta)\, \varphi(\beta) \left(-\langle h'',\beta\rangle +\int_{\R\times[0,1]} f(da)\, h_r \, \ell^a(dr) \right)
\right].
\end{equation}
\end{prop}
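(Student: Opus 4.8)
The plan is to reduce the statement to the classical Gaussian integration by parts formula for $\mu$, after regularizing $f$ so that $\rho$ becomes Fr\'echet differentiable. Recall that $\mu$ is centred Gaussian with covariance $Q=(-2A)^{-1}$, so that for $h\in D(A)$ one has $Q^{-1}h=-2Ah=-h''$ and the Gaussian integration by parts formula (see \cite{dpz3}) reads
\[
\E[\partial_h\Phi(\beta)]=-\E[\Phi(\beta)\,\langle h'',\beta\rangle],\qquad \Phi\in C^1_b(H).
\]
First I would choose a mollification $f_n:=f*\phi_{1/n}\in C^\infty_b(\R)$, so that $f_n$ is uniformly bounded, its total variation does not exceed that of $f$, $f_n\to f$ Lebesgue-a.e., and the measures $f_n(da)=f_n'(a)\,da$ converge weakly-$*$ to $f(da)$. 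Setting $F_n(x):=\int_0^1 f_n(x_r)\,dr$ and $\rho_n:=\exp(-F_n)$, each $F_n$ is Fr\'echet-$C^1$ with $\partial_h F_n(x)=\int_0^1 f_n'(x_r)\,h_r\,dr$, hence $\rho_n\varphi\in C^1_b(H)$, and the product rule together with the displayed formula applied to $\Phi=\rho_n\varphi$ gives
\[
\E[\rho_n(\beta)\,\partial_h\varphi(\beta)]
=\E\!\left[\rho_n(\beta)\,\varphi(\beta)\Big(-\langle h'',\beta\rangle+\partial_h F_n(\beta)\Big)\right].
\]

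Next I would rewrite the drift term by the weighted occupation times formula for the semimartingale $\beta$: since $\mu$-a.s. $\beta$ possesses a bi-continuous family of local times and $d\langle\beta\rangle_r=dr$, for every bounded Borel $g$ one has $\int_0^1 g(\beta_r)\,h_r\,dr=\int_\R g(a)\big(\int_0^1 h_r\,\ell^a(dr)\big)\,da$, whence, with $g=f_n'$ and $f_n'(a)\,da=f_n(da)$,
\[
\partial_h F_n(\beta)=\int_{\R\times[0,1]}f_n(da)\,h_r\,\ell^a(dr),\qquad\mu\text{-a.s.}
\]
It then remains to let $n\to\infty$. Because $F_n(\beta)\to F(\beta)$ $\mu$-a.s.\ (the discontinuity set of $f$ is Lebesgue-null, hence carries no occupation time of $\beta$) and $0\le\rho_n\le e^{\|f\|_\infty}$, dominated convergence handles the left-hand side and the $\langle h'',\beta\rangle$-term, the latter being integrable since $\langle h'',\beta\rangle$ is Gaussian.

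The main obstacle is the passage to the limit in the singular local-time term. For $\mu$-a.e.\ fixed $\beta$, integrating by parts in $r$ and using $h_0=h_1=0$,
\[
\Psi^\beta(a):=\int_0^1 h_r\,\ell^a(dr)=-\int_0^1 \ell^a_r\,h'_r\,dr,
\]
which, by joint continuity of $(a,r)\mapsto\ell^a_r$, is a continuous function of $a$ with compact support (as $\ell^a_\cdot\equiv 0$ for $|a|$ large). Hence the weak-$*$ convergence $f_n(da)\to f(da)$ yields $\int f_n(da)\,\Psi^\beta(a)\to\int f(da)\,\Psi^\beta(a)$ for $\mu$-a.e.\ $\beta$. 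To exchange limit and expectation I would dominate
\[
\Big|\int f_n(da)\,\Psi^\beta(a)\Big|\le \mathrm{TV}(f)\,\|h'\|_\infty\,\sup_{a,r}\ell^a_r,
\]
which lies in $L^1(\mu)$ thanks to the finiteness of all moments of $\sup_{a,r}\ell^a_r$ for the Brownian bridge. Dominated convergence then identifies the limit with $\E[\rho(\beta)\varphi(\beta)\int_{\R\times[0,1]}f(da)\,h_r\,\ell^a(dr)]$, completing the proof. The only delicate points are the uniform total-variation bound $\mathrm{TV}(f_n)\le\mathrm{TV}(f)$ under mollification and the integrability of the maximal local time, both of which are standard.
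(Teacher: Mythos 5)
Your proof is correct, but it follows a genuinely different route from the paper. The paper's own argument avoids any regularization of $f$: it writes $F(\beta+\gep h)$ via the occupation-time formula and the layer-cake representation of $f$ through its Lebesgue--Stieltjes measure $f(da)$, computes the G\^ateaux derivative $\frac{d}{d\gep}F(\beta+\gep h)\big|_{\gep=0}$ directly in terms of $\int_{\R\times[0,1]} f(da)\, h_r\,\ell^a(dr)$, and then obtains \eqref{ibpf1} by applying the Cameron--Martin shift formula to $\E[\rho(\beta)\varphi(\beta+\gep h)]$ and differentiating at $\gep=0$. You instead mollify $f$, apply the standard Gaussian integration by parts to the smooth density $\rho_n\varphi\in C^1_b(H)$, and pass to the limit, handling the singular term by testing the weak-$*$ convergent measures $f_n'(a)\,da\rightharpoonup f(da)$ against the continuous, compactly supported function $\Psi^\beta(a)=-\int_0^1\ell^a_r\,h'_r\,dr$, with domination by $\mathrm{TV}(f)\,\|h'\|_\infty\,\sup_a\ell^a_1$. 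The paper's route is shorter and needs no approximation, but its differentiation steps (under the expectation, and of the layer-cake integral) are left informal --- indeed the displayed intermediate derivative carries a sign that is inconsistent with the final formula \eqref{ibpf1}, which your computation confirms is the correct one. Your route is longer but each step is elementary and standard, and it has the added merit of being thematically consistent with the rest of the paper: the same devices (uniformly bounded smooth $f_n$ with $f_n'(a)\,da\rightharpoonup f(da)$, joint continuity of local times, moment bounds on $\ell^a_1$) reappear in the paper's Section \ref{ida} and in the proof of Theorem \ref{Mosco}, and the paper itself invokes the $L^p$-integrability of $\ell^a_1$ in the proof of Proposition \ref{decomposition}, so the two ``standard facts'' you defer to are ones the authors also rely upon.
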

\begin{proof}
Let $h\in D(A)$ and $\gep\in\R$, by the occupation time
formula:
\[
\begin{split}
F(\beta+\gep h) & = \int_0^1 f(\beta_r+\gep h_r)\, dr =
\int_\R \int_0^1 f(a+\gep h_r)\, \ell^a(dr)   \, da
\\ & = \int_{\R\times\R\times[0,1]} da\, f(da)\, \ell^a(dr) \, \un{(a\geq s-\gep h_r)}\qquad {\rm a.s.}
\end{split}
\]
where $(\ell^a(r), a\in\R, r\in[0,1])$ is the local times family of $\beta$.
Therefore
\[
\left. \frac{d}{d\gep}\, F(\beta+\gep h) \right|_{\gep=0} =
-\int_{\R\times[0,1]} f(da)\, h_r \, \ell^a(dr).
\]
Then by using the Cameron-Martin formula
\[
\begin{split}
\E[\rho(\beta)\, \varphi(\beta+\gep h)]  & =
\E[\rho(\beta-\gep h)\, \varphi(\beta) \, \exp\left(-\gep \langle h'',\beta\rangle -
\|h\|^2 \gep^2/2\right)]
\end{split}
\]
and by differentiating w.r.t. $\gep$ at $\gep=0$ we conclude.
\end{proof}

We want now to show that the process associated with $\cE$ satisfies \eqref{1}.
We are going to apply \eqref{tipler} to $U^h(x):=\langle x,h\rangle$, $x\in H$,
with $h\in C^2_c((0,1);\R^d)$. Clearly $U^h\in {\rm Lip}(H)\subset D({\cE})$.
Our aim is to prove the following
\begin{prop}\label{decomposition}
There is an exceptional set $N$ such that
for all $x\in H\setminus N$, $\bbP_x$-a.s. for all $t\geq 0$
\begin{equation} \label{decomposition2}
N_t^{[U^h]}  =  \frac12\int_0^t
 \langle h'', u_s\rangle  \, ds +\frac12\int_{]0,t]\times[0,1]} \int_\R f(da)
\, h_r' \, \ell^a_{s,r} \, ds \, dr
\end{equation}
where a.s. for all $s>0$
\[
-\int_{[0,1]} \int_\R  h_r' \, \varphi(a) \, \ell^a_{s,r} \, dr =
\int_0^1 h_r \, \varphi(u_s(r)) \, dr
, \qquad \forall \, \varphi\in C_b(\R).
\]
\end{prop}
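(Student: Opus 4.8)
The plan is to apply the Fukushima decomposition \eqref{tipler} to $U^h(x)=\langle x,h\rangle$ and to identify the zero-energy part $N^{[U^h]}$ through the smooth measure it corresponds to, which we read off from the integration by parts formula of Proposition \ref{prop:ibpf}. Recall that for $u\in D(\cE)$ the functional $N^{[u]}$ is characterized, up to equivalence, as the unique continuous additive functional of zero energy in \eqref{tipler}; when it has bounded variation, the general theory of \cite{fot} (the Revuz correspondence between such functionals and smooth signed measures) identifies it with $-A^\Sigma$, where $A^\Sigma$ has Revuz measure $\Sigma$ determined by $\cE(u,v)=\int v\,d\Sigma$ on a dense class of $v$. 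So the problem reduces to computing this measure and checking that it is smooth.

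First I would compute $\cE(U^h,v)$ for $v\in\cK={\rm Exp}_A(H)$. Since $\nabla U^h=h$, we have $\cE(U^h,v)=\frac12\int_H\langle h,\nabla v\rangle\,d\nu=\frac12\int_H\partial_h v\,d\nu$, and because $\nu=\frac1Z\rho\,\mu$ this equals $\frac1{2Z}\E[\rho(\beta)\,\partial_h v(\beta)]$. Applying Proposition \ref{prop:ibpf} (with $v$ in the role of $\varphi$, noting $\cK\subset C^1_b(H)$ and $h\in C^2_c((0,1))\subset D(A)$) gives
\[
\cE(U^h,v)=\int_H v\,g_h\,d\nu,\qquad g_h(x):=\frac12\Big(-\langle h'',x\rangle+\int_{\R\times[0,1]}f(da)\,h_r\,\ell^a_x(dr)\Big),
\]
where $\ell^a_x(dr)$ is the local-time measure in $r$ of the path $x$, so the candidate Revuz measure is $\Sigma:=g_h\,\nu$. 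A sanity check with $f\equiv0$ recovers $g_h=-\frac12\langle h'',\cdot\rangle$ and hence $N^{[U^h]}_t=\frac12\int_0^t\langle h'',u_s\rangle\,ds$, the drift of the linear stochastic heat equation \eqref{sheat}; this fixes the signs in the correspondence above.

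The main work is to show $\Sigma=g_h\,\nu$ is a signed smooth measure. That it charges no set of zero capacity is immediate, since $\Sigma\ll\nu$ and sets of zero capacity are $\nu$-negligible. The delicate point is the local-time term: I would first check it is $\nu$-integrable, using that $f$ is bounded with bounded variation (so $|f|(\R)<\infty$) together with the finiteness of the moments of $\sup_a\ell^a_{x,1}$ under $\mu$ (equivalently under $\nu$, by \eqref{estcH}); and then establish the finite-energy bound \eqref{finiteenergy} for the positive and negative parts of $\Sigma$, so that smoothness follows from \cite[Lemma 2.2.3]{fot}. For the Gaussian part $-\frac12\langle h'',\cdot\rangle\,\nu$ this is the standard Ornstein--Uhlenbeck estimate; for the local-time part I expect the cleanest route is to rewrite it via the occupation-times formula and to compare $\cE_1$ with $\cE^0_1$ through \eqref{estcE}, reducing to a Gaussian estimate. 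This verification is the principal obstacle, precisely because the drift is genuinely singular and lacks dissipativity. Once $\Sigma$ is smooth, \cite[Theorem 5.1.4]{fot} provides the PCAFs associated with $\Sigma^\pm$, and $A^\Sigma_t=\int_0^t g_h(u_s)\,ds$ (the representation of a density against the symmetrizing measure, well defined for a.e. $s$ thanks to the existence of local times for $u_s$ proved above); the correspondence then yields $N^{[U^h]}_t=-\int_0^t g_h(u_s)\,ds$ outside an exceptional set $N$.

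It remains to put this into the stated form. Since $h\in C^2_c((0,1))$ satisfies $h_0=h_1=0$, integration by parts in $r$ gives $\int_0^1 h_r\,d_r\ell^a_{s,r}=-\int_0^1 h_r'\,\ell^a_{s,r}\,dr$, so the local-time contribution to $-g_h(u_s)$ becomes $+\frac12\int_\R f(da)\int_0^1 h_r'\,\ell^a_{s,r}\,dr$; inserting this into $N^{[U^h]}_t=-\int_0^t g_h(u_s)\,ds$ produces exactly \eqref{decomposition2}. Finally, the occupation-density identity follows from the occupation-times formula $\int_\R\varphi(a)\,\ell^a_{s,r}\,da=\int_0^r\varphi(u_s(\theta))\,d\theta$ and one further integration by parts in $r$, again using $h_0=h_1=0$; this last step is routine once the bi-continuous family of local times from the previous proposition is in hand.
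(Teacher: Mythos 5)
Your proposal is correct and takes essentially the same route as the paper: Fukushima decomposition for $U^h$, identification of the Revuz measure of $N^{[U^h]}$ from the integration by parts formula of Proposition \ref{prop:ibpf}, smoothness via the finite-energy bound \eqref{finiteenergy} (which the paper gets by Cauchy--Schwarz once the local-time term is shown to lie in $L^2(\nu)$ -- easier than the ``principal obstacle'' you anticipate), and the Revuz correspondence plus spatial integration by parts to reach \eqref{decomposition2}. The only real differences are cosmetic: the paper reduces to $h\ge 0$, $h''\ge 0$ by linearity instead of using the Jordan decomposition, and it justifies the time-integral representation of the PCAF (the step you gloss as ``the representation of a density against the symmetrizing measure'') by an explicit truncation and monotone-convergence argument with $\Phi\wedge n$.
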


\begin{proof}
The main tools of the proof are the integration by parts formula \eqref{ibpf1} and
a number of results from the theory of Dirichlet forms in \cite{fot}.
We start by applying \eqref{tipler} to $U^h(x):=\langle x,h\rangle$, $x\in H$. By approximation and linearity we can assume that $h\in D(A)$, $h''\geq 0$ and therefore $h\geq 0$ as well.
The process $N^{[U^h]}$ is a CAF
of $X$, and its Revuz measure is $\frac 12\,\Sigma^h$, where
\begin{equation}
\Sigma^h(dw):=\left(\langle w,h''\rangle - \int_{\R\times[0,1]} f(da)\, h_r \, d\ell^a_r \right) \nu(dw)
\end{equation}
and $\ell^a_r$ is the bi-continuous family of local times of the Brownian bridge. Remark that we have the estimate
\[
\E\left( \left(\int_{\R\times[0,1]} f(da)\, h_r \, d\ell^a_r \right)^2\right)<+\infty
\]
since $f(da)$ has globally bounded variation, $h$ is bounded and $\ell^a_1$ is in $L^p$ for any $p\geq 1$.

By linearity, it is enough to consider the case $h\geq 0$. Then 
the measurable function $\Phi(w) := \int_{[0,1]\times\bbR} h_r \, d\ell^a_r\, f(da)$ is non-negative, and $\Phi \, d\nu$ is a measure with finite energy, since
\[
\int |v| \, \Phi \, d\nu \leq \|\Phi\|_{L^2(\nu)} \, \|v\|_{L^2(\nu)} \leq \|\Phi\|_{L^2(\nu)} \,\sqrt{\cE_1(v,v)}, \qquad \forall \, v\in D(\cE)\cap C_b(H),
\]
see \eqref{finiteenergy} above. In particular, $\Phi \, d\nu$ is a smooth measure.
By theorem 5.1.3 of \cite{fot}, there is an associated PCAF, denoted by $N_t$. Notice that the process
\[
N^{n}_t:=\int_0^t (\Phi\wedge n)(X_s) \, ds
\]
is a well defined PCAF with Revuz measure $\Phi\wedge n\, d\nu$ and $N^n_t\leq N_t$, since $N_t-N^n_t$ is a CAF with a non-negative Revuz measure. By monotone convergence we find for all non-negative $ \varphi\in C_b(H)$
\[
\begin{split}
\int_{H} \varphi\, \Phi \, d\nu & = \lim_n 
\int_{H} \varphi\, \Phi\wedge n \, d\nu \, = \lim_n
 {\mathbb E}_\nu\left[
\int_0^1 \varphi(X_t)\, (\Phi\wedge n)(X_t) \, dt \right]\,
\\ & =   {\mathbb E}_\nu\left[
\int_0^1 \varphi(X_t)\, \Phi(X_t) \, dt \right].
\end{split}
\]
Therefore, $t\mapsto \int_0^t \Phi(X_s) \, ds$ is a PCAF with Revuz measure $\Phi \, d\nu$ and must
therefore be equivalent to $t\mapsto N_t$.
\end{proof}
\subsection{Identification of the noise term}
We deal now with the identification of $M^{[U^h]}$ with
the integral of $h$ with respect to a space-time white noise.
\begin{prop}\label{decomposition3}
There exists a Brownian sheet $(W(t,\theta), \, t\geq 0, \theta\in[0,1])$,
such that
\begin{equation}
M^{[U^h]}_t = \int_0^t \int_0^1 h_{\theta} \, W(ds,d\theta), \qquad h \in H.
\end{equation}
\end{prop}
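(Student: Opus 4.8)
The plan is to identify $M^{[U^h]}$ through its quadratic variation, which the theory of Dirichlet forms computes explicitly, and then to recognize the resulting covariance structure as that of a space-time white noise. Recall that by \cite[Theorem 5.2.3]{fot} the sharp bracket $\langle M^{[U^h]}\rangle$ is the PCAF whose Revuz measure is the energy measure $\mu_{\langle U^h\rangle}$. Since $\nabla U^h\equiv h$ is constant, a direct computation from the square field of $\cE$ gives, for every $f\in D(\cE)\cap C_b(H)$,
\[
\int_H f\, d\mu_{\langle U^h\rangle} = 2\,\cE(U^h f, U^h) - \cE((U^h)^2, f) = \|h\|^2\int_H f\, d\nu,
\]
so that $\mu_{\langle U^h\rangle} = \|h\|^2\,\nu$. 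As the Revuz measure of the PCAF $t\mapsto t$ is exactly $\nu$ (by invariance of $\nu$), I conclude $\langle M^{[U^h]}\rangle_t = \|h\|^2\,t$, and by polarization $\langle M^{[U^h]}, M^{[U^k]}\rangle_t = \langle h,k\rangle\, t$ for all $h,k\in D(A)$.

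With the bracket in hand I would construct the noise. Fixing the orthonormal basis $(e_k)_k$ of $H$, I set $B^k_t := M^{[U^{e_k}]}_t$. The identity above yields $\langle B^j, B^k\rangle_t = \delta_{jk}\,t$, so by L\'evy's characterization the $(B^k)_k$ are, under $\bbP_x$ for quasi-every $x$, a sequence of independent standard Brownian motions. Since $h\mapsto M^{[U^h]}$ is linear up to equivalence of additive functionals (by uniqueness in the Fukushima decomposition and linearity of $U^h$ in $h$), I would fix the exceptional set by taking the union of the countably many exceptional sets attached to rational linear combinations of the $e_k$, so that all the $B^k$ and their rational combinations are simultaneously defined off a single exceptional set.

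I then define the sheet by
\[
W(t,\theta) := \sum_{k} \left(\int_0^\theta e_k(r)\,dr\right) B^k_t,
\]
the series converging in $L^2(\bbP_x)$ for each $(t,\theta)$ because $\sum_k \langle \un{[0,\theta]}, e_k\rangle^2 = \theta$. A Gaussian computation then gives $\E_x[W(t,\theta)W(t',\theta')] = (t\wedge t')(\theta\wedge\theta')$, which together with Kolmogorov's criterion produces a jointly continuous version; hence $W$ is a Brownian sheet. Writing the associated white-noise integral as $\int_0^t\int_0^1 h_\theta\, W(ds,d\theta) = \sum_k\langle h,e_k\rangle\, B^k_t$, the representation $M^{[U^h]}_t = \int_0^t\int_0^1 h_\theta\, W(ds,d\theta)$ holds for $h=e_k$ by construction, extends to finite linear combinations by linearity, and passes to all $h\in H$ through the isometry $\E_\nu[(M^{[U^{h}]}_t - M^{[U^{h'}]}_t)^2] = \|h-h'\|^2\,t$ and density.

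The conceptual heart of the argument is the bracket computation: it is exactly the identity $\langle M^{[U^h]}, M^{[U^k]}\rangle_t = \langle h,k\rangle\,t$ that encodes the flatness of white noise and that lets L\'evy's theorem be invoked. I expect the main technical care to be needed in two places. First, in applying the energy-measure formalism to the \emph{unbounded} linear functionals $U^h$: here I would justify the displayed formula by truncation, approximating $U^h$ by bounded cylinder functions in $D(\cE)$ and passing to the limit. Second, in assembling a single Brownian sheet $W$ off one exceptional set while respecting the $\bbP_x$-dependence of the martingale additive functionals. Neither difficulty is deep, but together they account for most of the verification.
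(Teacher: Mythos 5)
Your proposal is correct and follows essentially the same route as the paper: the decisive step in both is computing the Revuz measure of the bracket via \cite[Theorem 5.2.3]{fot}, namely $2\,\cE(U^h f,U^h)-\cE((U^h)^2,f)=\|h\|^2\int f\,d\nu$, which gives $\langle M^{[U^h]},M^{[U^k]}\rangle_t=\langle h,k\rangle\,t$ and allows L\'evy's theorem to be invoked. The only difference is cosmetic: the paper defines the sheet directly as $W(t,\theta):=M^{[U^{h}]}_t$ with $h:=\un{[0,\theta]}$, using joint Gaussianity of the family $(M^{[U^h]})_h$, whereas you assemble it as the series $\sum_k\bigl(\int_0^\theta e_k\bigr)B^k_t$ over an orthonormal basis and then extend by isometry and density; the two constructions are equivalent.
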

\begin{proof}
We recall that, for $U\in D(\cE)$, the process $M^{[U]}$ is a continuous
martingale, whose quadratic variation
$(\langle M^{[U]}\rangle_t)_{t\geq 0}$
is a PCAF of $X$ with Revuz measure $\mu_{\langle M^{[U]}\rangle}$ given by
the formula
\begin{equation}\label{quadratic}
\int f\, d\mu_{\langle M^{[U]}\rangle} = 2\cE(Uf,U)-\cE(U^2,f), \qquad
\forall \, f\in D(\cE)\cap C_b(H),
\end{equation}
see \cite[Theorem 5.2.3]{fot}. Now, if we apply this formula
to $U^h(x)=\langle x,h\rangle$, then we obtain
\[
\int f\, d\mu_{\langle M^{[U^h]}\rangle} = \|h\|^2\int f\,d\nu, \qquad
\forall \, f\in D(\cE)\cap C_b(H).
\]
Therefore, the quadratic variation $\langle M^{[U^h]}\rangle_t$ is equal
to $\|h\|^2t$ for all $t\geq 0$, and, by L\'evy's Theorem, $(M^{[U^h]}\cdot\|h\|^{-1})_{t\geq 0}$ is a Brownian motion. Moreover, the parallelogram law,
if $h_1,h_2\in H$ and $\langle h_1,h_2\rangle=0$, then the
quadratic covariation between $M^{[U^{h_1}]}$ and $M^{[U^{h_2}]}$ is equal to
\[
\langle M^{[U^{h_1}]}, M^{[U^{h_2}]}\rangle_t = t \, \langle h_1,h_2\rangle,\qquad
t\geq 0.
\]
Therefore, $(M^{[U^h]}_t, t\geq 0, h\in H)$ is a Gaussian process
with covariance structure
\[
\E_x\left(M^{[U^{h_1}]}_t \, M^{[U^{h_2}]}_s \right) =
s\wedge t\,  \langle h_1,h_2\rangle.
\]
If we define $W(t,\theta):=M^{[U^{h}]}_t$ with
$h:=1_{[0,\theta]}$, $t\geq 0$, $\theta\in[0,1]$, then $W$ is the
desired Brownian sheet.
\end{proof}
%
\begin{proof}[Proof of Proposition \ref{exmarkov}]
Quasi-regularity has been proved in Lemma \ref{qr}. 
First we apply the Fukushima decomposition \eqref{tipler} to the function $U_h(x):=\langle x,h\rangle$ and identify the terms using propositions \ref{decomposition3} and \ref{decomposition} and the above results. It remains to prove that the process $(X_t)_{t\geq 0}$ satisfies the desired continuity properties. To this aim, we use the result of Lemma \ref{apriori} below. We notice that for any $\eta\in(0,1/2)$ and $p>1$
\[
\begin{split}
\frac1C\int_H \| x \|_{W^{\eta,p}(0,1)} ^p \, \nu(dx) & \leq 
 \E\left( \| \beta\|^p_{W^{\eta,p}(0,1)} \right)   \leq  \E  \left(|\beta_r|^p + \int_0^1\int_0^1 \dfrac{|\beta_s - \beta_t|^p }{|s-t|^{p\eta+1}} \,dt\,ds\right)
\\ & \leq 1+ \int_0^1\int_0^1 {|s-t|^{p(\frac12-\eta)-1}} \,dt\,ds <  +\infty.
\end{split}
\]
Then by Lemma \ref{apriori} and by Kolmogorov's criterion in the Polish space $C^\beta([0,1])$ we obtain that 
under $\bbP_\nu$ the coordinate process has a modification in $C([0,T]\times[0,1])$ for all $T>0$.

Finally, in order to prove continuity of a non-stationary solution, we use the absolute-continuity property of proposition \ref{r_lambdaabs}. Let us consider the set $C:=C([0,1])$ endowed with the uniform topology.
Let $S\subset \,]0,+\infty[$ be countable and satisfying $\varepsilon:=\inf S>0$ and
$\sup S<\infty$, and define $B_S\subset
C^{]0,+\infty[}$ as
$$
B_S:=\left\{\omega\in C^{]0,+\infty[}:\ \text{the restriction of $\omega$ to $S$ is
uniformly continuous}\right\},
$$
then we know that $\bbP_\nu(B_S)=1$, i.e. $\bbP_x(B_S)=1$ for $\nu$-a.e. $x$. For all $x\in N^c$, where $N$ is exceptional, the law of $X_\varepsilon$ under $\bbP_x$ is absolutely continuous w.r.t. $\nu$ for all $\varepsilon>0$.
Then $\bbP_{X_\varepsilon}(B_{S-\varepsilon})=1$, $\bbP_x$-almost surely.
Taking expectations, and using the Markov property, we get $\bbP_x(B_S)=1$. Arguing as in \cite[Lemma~2.1.2]{strook} we obtain that $\bbP_x^*\left(C(]0,+\infty[;C)\right)=1$, where $\bbP_\nu^*$ denotes the outer measure.

\end{proof}

\section{Convergence of regularized equations}\label{ida}

In this section we consider a smooth approximation $f_n$ of $f$ and and we study
convergence in law of $u^n$ to $u$, where
\begin{equation}\label{1n}
\left\{ \begin{array}{ll} {\displaystyle \frac{\partial u^n}{\partial
t}=\frac 12\,
\frac{\partial^2 u^n}{\partial \theta^2} -\frac12 \,f'_n(u^n) + \dot{W}, }
\\ \\
u^n(t,0)=u^n(t,1)=0 ,
\\ \\
u^n(0,\theta)=u_0^n(\theta), \quad \theta\in[0,1].
\end{array} \right.
\end{equation}
By a $\Gamma$-convergence technique, we shall prove convergence in law of the stationary  processes.

Since $f$ is bounded and with bounded variation, then it is continuous outside a countable set $\Delta_f$. Moreover we can find a sequence of smooth functions $f_n:\R\mapsto\R$ such that
\begin{enumerate}
\item $(f_n)_n$ is uniformly bounded
\item $f_n\to f$ as $n\to+\infty$ locally uniformly in $\R\setminus \Delta_f$.
\end{enumerate}
We define the probability measure on $H$
\begin{equation}\label{mesinv2}
\nu_n(dx) = \frac 1{Z_n} \exp(-F_n(x))\, \mu(dx), \qquad Z_n:=\int \exp(-F_n)\, d\mu,
\end{equation}
where $Z_n$ is a normalizing
constant. Again, $\nu_n$ is not necessarily log-concave, see \cite{asz}.
Setting
\[
\rho_0:= 1, \quad
\rho_n := \frac{d\nu_n}{d\mu}, \quad n\geq 1, \qquad \rho := \frac{d\nu}{d\mu},
\]
we find that $0<c\leq\rho_n\leq C<+\infty$ and
$0<c\leq\rho\leq C<+\infty$ on $H$, since $f_n$ and $f$ are bounded for all $n\in\N$.
We have then the simple
\begin{lemma}\label{lem1}
There is a canonical identification between the Hilbert spaces $L^2(\nu)$ and
$L^2(\nu_n)$ for all $n\in\N$ and for positive constants $c,C$
\begin{equation}\label{estcH0}
\frac cC\|\cdot\|^2_{L^2(\nu)}\leq\|\cdot\|^2_{L^2(\nu_n)}\leq \frac Cc\|\cdot\|^2_{L^2(\nu)}.
\end{equation}
\end{lemma}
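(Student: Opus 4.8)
The plan is to exploit the two-sided density bounds recorded just above the statement, namely $0<c\le\rho\le C$ and $0<c\le\rho_n\le C$ on $H$, which assert that $\nu$, $\nu_n$ and $\mu$ are all mutually equivalent with Radon--Nikodym derivatives bounded away from $0$ and $\infty$. First I would observe that mutual equivalence forces these three measures to share the same family of $\mu$-null sets; consequently the equivalence classes of measurable functions that make up $L^2(\nu)$ and $L^2(\nu_n)$ coincide as sets of classes, and the identity map on measurable functions furnishes the claimed canonical identification. It then only remains to verify that this identity is a bounded isomorphism with bounded inverse, which is precisely the norm comparison \eqref{estcH0}.

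For the quantitative estimate I would simply rewrite both $L^2$-norms against the reference measure $\mu$. Using $d\nu_n=\rho_n\,d\mu$ and $d\nu=\rho\,d\mu$, for any measurable $\varphi$ one has $\|\varphi\|_{L^2(\nu_n)}^2=\int_H|\varphi|^2\rho_n\,d\mu$ and $\|\varphi\|_{L^2(\nu)}^2=\int_H|\varphi|^2\rho\,d\mu$. Bounding $\rho_n\le C$ from above and $\rho\ge c$ from below yields $\|\varphi\|_{L^2(\nu_n)}^2\le C\int_H|\varphi|^2\,d\mu\le \frac{C}{c}\,\|\varphi\|_{L^2(\nu)}^2$, while bounding $\rho_n\ge c$ from below and $\rho\le C$ from above gives $\|\varphi\|_{L^2(\nu_n)}^2\ge c\int_H|\varphi|^2\,d\mu\ge \frac{c}{C}\,\|\varphi\|_{L^2(\nu)}^2$. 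Together these are exactly \eqref{estcH0}, and in particular $\varphi\in L^2(\nu)$ if and only if $\varphi\in L^2(\nu_n)$, confirming that the identification is genuinely a bijection of the two spaces.

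There is essentially no serious obstacle here: the whole content is already packaged in the uniform boundedness of the densities $\rho$ and $\rho_n$ away from $0$ and $\infty$, which in turn follows from the boundedness of $f$ and of the family $(f_n)$ assumed above. The only point deserving a word of care is that the constants $c,C$ must be chosen \emph{uniformly in} $n$, so that \eqref{estcH0} holds with the same pair $c,C$ for every $n$; this uniformity is guaranteed precisely by property (1), the uniform bound on $(f_n)_n$, together with the corresponding bound for $f$.
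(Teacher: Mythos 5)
Your proof is correct and follows exactly the route the paper intends: the paper's own proof simply declares the result ``obvious'' from the two-sided bounds $0<c\leq\rho_n\leq C$ and $0<c\leq\rho\leq C$, and your argument spells out precisely the computation behind that claim, including the correct observation that uniformity of $c,C$ in $n$ comes from the uniform boundedness of $(f_n)_n$.
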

\begin{proof}
This is obvious since $0<c\leq\rho_n\leq C<+\infty$ and
$0<c\leq\rho\leq C<+\infty$.
\end{proof}
In particular we can consider $L^2(\nu_n)$ as being a copy of $L^2(\nu)$ endowed
with a different norm $\|\cdot\|_{L^2(\nu_n)}$. We shall use this notation below.

\medskip
We define the symmetric positive bilinear form
\[
{\cE}^n(\varphi,\psi) \, := \,
\frac 12 \, \int_H \langle \nabla \varphi ,
\nabla \psi \rangle \, d\nu_n,
\qquad  \forall \varphi,\psi\in C_b^1(H),
\]
Let us set $\cK:={\rm Exp}_A(H)$.
\begin{lemma}\label{lem22}
The symmetric positive bilinear forms $({\cE}^n,\cK)$ is
closable in $L^2(\nu_n)$. We denote
by $({\cE}^n,D({\cE}^n))$ the closure.
\end{lemma}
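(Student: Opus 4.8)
The plan is to follow the proof of Lemma~\ref{lem2} verbatim, replacing the measure $\nu$ by $\nu_n$ and the form $\cE$ by $\cE^n$. The only structural input needed is a two-sided comparison between the first-order forms $\cE^n_1$ and $\cE^0_1$, after which closability transfers automatically from the Gaussian form $(\cE^0,\cK)$, whose closability in $L^2(\mu)$ was recalled in Section~\ref{cE}.

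First I would record the comparison. Writing $\cE^n(\varphi,\varphi)=\frac12\int_H\|\nabla\varphi\|^2\,d\nu_n=\frac12\int_H\|\nabla\varphi\|^2\,\rho_n\,d\mu$ and using the uniform bounds $0<c\le\rho_n\le C<+\infty$ established just above, one gets $c\,\cE^0(\varphi,\varphi)\le\cE^n(\varphi,\varphi)\le C\,\cE^0(\varphi,\varphi)$ for every $\varphi\in\cK$. The same bounds applied to the $L^2$ parts give $c\,\|\varphi\|^2_{L^2(\mu)}\le\|\varphi\|^2_{L^2(\nu_n)}\le C\,\|\varphi\|^2_{L^2(\mu)}$. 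Adding these, there is a constant $C'>0$ with
\[
\frac1{C'}\,\cE^0_1\le\cE^n_1\le C'\,\cE^0_1,
\]
the exact analogue of \eqref{estcE}.

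With this in hand the closability argument is routine. If $(\varphi_k)\subset\cK$ satisfies $\varphi_k\to0$ in $L^2(\nu_n)$ and is $\cE^n$-Cauchy, then by the norm equivalence $\varphi_k\to0$ in $L^2(\mu)$ and $(\varphi_k)$ is $\cE^0$-Cauchy; closability of $(\cE^0,\cK)$ forces $\cE^0(\varphi_k,\varphi_k)\to0$, and the comparison then gives $\cE^n(\varphi_k,\varphi_k)\to0$. Hence $(\cE^n,\cK)$ is closable. There is no real obstacle here: the entire content is the boundedness of the densities $\rho_n$, which holds because each $f_n$ is bounded, so for every fixed $n$ the comparison constants are finite and the argument of Lemma~\ref{lem2} applies without change.
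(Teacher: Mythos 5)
Your proposal is correct and follows essentially the same route as the paper: the paper proves Lemma~\ref{lem22} by declaring it identical to Lemma~\ref{lem2}, whose proof is exactly the two-sided comparison $\frac1{C}\,\cE^0_1\leq \cE^n_1\leq C\,\cE^0_1$ (from the uniform bounds $0<c\leq\rho_n\leq C$) followed by transfer of closability from $(\cE^0,\cK)$. Your only addition is to spell out the routine Cauchy-sequence argument that the paper compresses into ``yields immediately the result,'' which is fine.
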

\begin{proof}
The proof is identical to that of Lemma \ref{lem2}.
\end{proof}
We recall that the Dirichlet form $({\cE}^n,D({\cE}^n))$ is associated with the solution of equation \eqref{1n}, see
e.g. \cite{dpz3}.

\subsection{Convergence of Hilbert spaces}

We recall now the following definition, given by Kuwae and Shioya in \cite{kuwshi}.
\begin{definition}\label{kuwshi}
A sequence of Hilbert spaces $\bbH_n$ converges to a hilbert $\bbH$ if there is a family of linear maps $\{\Phi_n:\bbH \to \bbH_n\}$ such that:
\begin{equation}
\lim\limits_{n \to +\infty} \|\Phi_n(x)\|_{\bbH_n} =  \|x\|_{\bbH} ,\quad x\in \bbH
\end{equation}
A sequence $(x_n)_n$, $x_n\in\bbH_n$, converges \emph{strongly} to a vector $x\in \bbH$ if there exists a sequence $(\tilde{x}_n)_n$ in $\bbH$ such that $\tilde{x}_n\to x$ in $\bbH$ and
\begin{equation}
\lim\limits_{n\to +\infty}\varlimsup\limits_{m\to+\infty}\|\Phi_m(\tilde{x}_n) - x_m\|_{\bbH_m} = 0
\end{equation}
and $(x_n)_n$ converge \emph{weakly} to $x$ if
\begin{equation}
\lim\limits_{n\to +\infty} \langle x_n,z_n\rangle_{\bbH_n} = \langle x, z\rangle_{\bbH}
\end{equation}
for any $z\in \bbH$ and sequence $(z_n)_n$ , $z_n\in\bbH_n$, such that $z_n \to z$ strongly.
\end{definition}

\begin{lemma}$ $\label{topology}
\begin{enumerate}
\item
The sequence of Hilbert spaces $L^2(\nu_n)$ converges to $L^2(\nu)$,
by choosing $\Phi_n$ equal to the natural identification of
equivalence classes in $L^2(\nu_n)$ and $L^2(\nu)$.
\item $u_n\in L^2(\nu_n)$ converges strongly to $u\in L^2(\nu)$ if and only if
$u_n\to u$ in $L^2(\nu)$.
\item $u_n\in L^2(\nu_n)$ converges weakly to $u\in L^2(\nu)$ if and only if
$u_n\to u$ weakly in $L^2(\nu)$.
\end{enumerate}
\end{lemma}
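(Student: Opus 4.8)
The plan is to reduce all three statements to the pointwise convergence $\rho_n\to\rho$, which I would establish first. For $\mu$-a.e.\ path $x$ the Brownian bridge has a bi-continuous local time family $a\mapsto\ell^a$, so its occupation measure is absolutely continuous; since $\Delta_f$ is countable, hence Lebesgue-null, this gives $\int_0^1\un{(x_r\in\Delta_f)}\,dr=\int_{\Delta_f}\ell^a\,da=0$. Thus $x_r\notin\Delta_f$ for a.e.\ $r\in[0,1]$, where $f_n(x_r)\to f(x_r)$, and the uniform bound on $(f_n)$ lets me pass to the limit by dominated convergence to get $F_n(x)\to F(x)$ for $\mu$-a.e.\ $x$. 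A second dominated-convergence argument (the integrands $e^{-F_n}$ being uniformly bounded) yields $Z_n\to Z>0$, so that $\rho_n=e^{-F_n}/Z_n\to e^{-F}/Z=\rho$ both $\mu$- and $\nu$-a.e., with all densities trapped in $[c,C]$.

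Granting this, part (1) is a one-line dominated-convergence computation: for $x\in L^2(\nu)$ one has $\|\Phi_n(x)\|_{L^2(\nu_n)}^2=\int_H|x|^2\,(\rho_n/\rho)\,d\nu$, and since $\rho_n/\rho\to1$ with $|x|^2\rho_n/\rho\le(C/c)|x|^2\in L^1(\nu)$, this converges to $\int_H|x|^2\,d\nu=\|x\|_{L^2(\nu)}^2$.

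For part (2) I would use the norm comparison \eqref{estcH0} throughout. In the ``if'' direction I take the constant auxiliary sequence $\tilde x_k\equiv u$ in Definition \ref{kuwshi}; then $\tilde x_k\to u$ and $\|\Phi_m(u)-u_m\|_{L^2(\nu_m)}^2\le(C/c)\|u-u_m\|_{L^2(\nu)}^2\to0$, so the required double limit vanishes. Conversely, if $(u_n)$ converges strongly with auxiliary sequence $\tilde x_k\to u$, then \eqref{estcH0} gives
\[
\varlimsup_{m}\|u-u_m\|_{L^2(\nu)}\le\|u-\tilde x_k\|_{L^2(\nu)}+\sqrt{C/c}\,\varlimsup_{m}\|\Phi_m(\tilde x_k)-u_m\|_{L^2(\nu_m)},
\]
and letting $k\to\infty$ forces $u_m\to u$ in $L^2(\nu)$.

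Part (3) I would build on the characterization of strong convergence just obtained. For the ``only if'' direction, given $z\in L^2(\nu)$ I test against $z_n:=z\,\rho/\rho_n$; since $\rho/\rho_n\to1$ boundedly, $z_n\to z$ in $L^2(\nu)$, hence strongly, while $\langle u_n,z_n\rangle_{L^2(\nu_n)}=\int u_n z\,d\nu$, so Kuwae--Shioya weak convergence yields $\int u_n z\,d\nu\to\int uz\,d\nu$ for every $z$. For the ``if'' direction, given a strongly convergent $z_n\to z$ (i.e.\ $z_n\to z$ in $L^2(\nu)$) I set $w_n:=z_n\,\rho_n/\rho$ and check $w_n\to z$ in $L^2(\nu)$ via the splitting $w_n-z=(z_n-z)\rho_n/\rho+z(\rho_n/\rho-1)$, using \eqref{estcH0} on the first term and dominated convergence on the second; then $\langle u_n,z_n\rangle_{L^2(\nu_n)}=\langle u_n,w_n\rangle_{L^2(\nu)}\to\langle u,z\rangle_{L^2(\nu)}$ by the weak--strong pairing. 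The only genuinely analytic ingredient is the a.e.\ convergence $\rho_n\to\rho$, and its crux---the one point I expect to require care---is that the Brownian bridge assigns zero occupation time to the Lebesgue-null discontinuity set $\Delta_f$; everything after that is dominated convergence and the two-sided bound \eqref{estcH0}.
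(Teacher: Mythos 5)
Your proof is correct and follows essentially the same route as the paper: dominated convergence of the densities for part (1), the constant auxiliary sequence plus the two-sided norm comparison \eqref{estcH0} for part (2), and the density-ratio multipliers $z\,\rho/\rho_n$ and $z_n\,\rho_n/\rho$ for the two directions of part (3). The only difference is that you spell out, via the occupation-time formula and the Lebesgue-null discontinuity set $\Delta_f$, the $\mu$-a.e.\ convergence $\rho_n\to\rho$, which the paper simply asserts at this point (and only details later, in the proof of Proposition \ref{convHfin}); this is a welcome addition rather than a deviation.
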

\begin{proof}
\begin{enumerate}
\item We have to prove that for all $x\in L^2(\nu)$ we have $\|x\|_{L^2(\nu_n)} \to \|x\|_{L^2(\nu)}$ as $n\to\infty$. Since $e^{-F_n}/Z_n$ converges a.s. to $e^{-F}/Z$ and it is uniformly bounded, then the result follows by dominated convergence.
\item Let $(u_n)_n$ converges strongly to $u\in L^2(\nu)$ so there is a sequence $(\tilde{u}_n)_n$ in $L^2(\nu)$ tending to $u$ in $L^2(\nu)$ such that:
\begin{equation}\label{strongconv}
\lim\limits_{n}\varlimsup\limits_{m} \|\tilde{u}_n-u_m\|_{L^2(\nu)_m} = 0.
\end{equation}
Then we have:
\[
\qquad \varlimsup\limits_{m}\|u -u_m\|_{L^2(\nu)} \leq  \lim\limits_{n}\|u-\tilde{u}_n\|_{L^2(\nu)}+\frac{C}{c}\lim\limits_{n}\varlimsup\limits_{m}\|u_m - \tilde{u}_n\|_{L^2(\nu_m)} =0,
\]
so that $u_n\to u$ in $L^2(\nu)$. Conversely, if $u_n\to u$ in $L^2(\nu)$ then we can consider $\tilde{u}_n=u$ for all $n\in\N$ and \eqref{strongconv} holds.
\item Let $u_n\in L^2(\nu_n)$ be a sequence which converges weakly to $u\in L^2(\nu)$, i.e.
for all $v\in L^2(\nu)$ and any sequence $v_n\in L^2(\nu_n)$ strongly convergent to $v$
\[
\langle u_n, v_n\rangle_{L^2(\nu_n)} \to \langle u,v\rangle_{L^2(\nu)},\quad n\to +\infty.
\]
Let $v_n:=v\cdot \rho\cdot \rho_n^{-1}$, then by the dominated convergence theorem
$\|v_n-v\|_{L^2(\nu)}\to 0$ and by the previous point $v_n\in L^2(\nu_n)$ converges strongly to $v$. So we have
\[
\langle u_n, v\rangle_{L^2(\nu)} = \langle u_n, v_n\rangle_{L^2(\nu_n)}\nonumber\\
\to  \langle u, v\rangle_{L^2(\nu)},\quad n\to+\infty.
\]
Viceversa, let us suppose that for all $v\in L^2(\nu)$ we have $\langle u_n, v\rangle_{L^2(\nu)}\to \langle u, v\rangle_{L^2(\nu)}$ and let us consider any sequence $v_n\in L^2(\nu_n)$ strongly convergent to $v$. Setting $w_n:=v_n\cdot \rho_n\cdot \rho^{-1}$, by dominated convergence $\|w_n-v\|_{L^2(\nu)}\to 0$ and therefore
$\langle u_n, v_n\rangle_{L^2(\nu_n)}=\langle u_n, w_n\rangle_{L^2(\nu)}\to \langle u, v\rangle_{L^2(\nu)}$ and the proof is finished.
\end{enumerate}
\end{proof}

\subsection{Convergence of Dirichlet Forms}
Now we can give the definition of Mosco-convergence of Dirichlet forms.
This concept is useful for our purposes, since it was proved in \cite{kuwshi} to imply the convergence in a strong sense of the associated resolvents and semigroups.
\begin{definition}
If $\cE^n$ is a quadratic form on $\bbH_n$, then $\cE^n$ Mosco-converges to the quadratic form $\cE$ on $\bbH$  if the two following conditions hold:
\begin{enumerate}
\item[{\it Mosco I}.] For any sequence $x_n\in\bbH_n$, converging weakly to $x\in\bbH$,
\begin{equation}\label{MoscoI}
\cE(x,x) \leq \varliminf\limits_{n\to +\infty} \cE^n(x_n,x_n).
\end{equation}
\item[{\it Mosco II}.] For any $x\in\bbH$, there is a sequence $x_n\in\bbH_n$ converging strongly to $x\in\bbH$ such that
\begin{equation}\label{MoscoII}
\cE(x,x) = \lim\limits_{n\to +\infty}\cE^n(x_n,x_n).
\end{equation}
\end{enumerate}
\end{definition}
We say that a sequence of bounded operarors $(B_n)_n$ on $\bbH_n$, converges strongly to an operator B on $\bbH$, if $\bbH_n\ni B_nu_n\to Bu\in \bbH$ strongly for all sequence $u_n\in\bbH_n$ converging strongly to $u\in \bbH$. Then Kuwae and Shioya have proved in \cite{kuwshi} the following equivalence between Mosco convergence and strong convergence of the associated resolvent operators. 
\begin{theorem}[Kuwae and Shioya \cite{kuwshi}]\label{moscoks}
The Mosco convergence is equivalent to the strong convergence of the associated resolvents.
\end{theorem}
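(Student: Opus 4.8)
The statement is the varying-space Mosco theorem of Kuwae and Shioya \cite{kuwshi}; the plan is to route everything through the variational characterisation of the resolvent, which converts the equivalence into the classical principle that $\Gamma$-convergence of convex coercive functionals is the same as convergence of their minimisers. To each non-negative closed form I would attach its non-negative self-adjoint generator $A_n$ on $\bbH_n$, with resolvent $G^n_\lambda:=(\lambda+A_n)^{-1}$ for $\lambda>0$. The key elementary fact is that, for fixed $f\in\bbH_n$, the vector $u=G^n_\lambda f$ is the unique minimiser over $\bbH_n$ of the strictly convex coercive functional
\[
\Psi^n_\lambda(v):=\cE^n(v,v)+\lambda\,\|v\|_{\bbH_n}^2-2\,\langle f,v\rangle_{\bbH_n},
\]
and likewise on $\bbH$. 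Conditions \emph{Mosco I} and \emph{Mosco II} are exactly the $\Gamma$-lower-bound inequality and the recovery-sequence condition for the convergence of $\cE^n$ to $\cE$ in the sense of Definition \ref{kuwshi}, so the theorem reduces to the stability of these minimisation problems.

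For the implication from Mosco convergence to resolvent convergence, I would fix $\lambda>0$, take $f_n\to f$ strongly and set $u_n:=G^n_\lambda f_n$. Comparing $u_n$ with the competitor $0$ in $\Psi^n_\lambda$ yields the uniform a priori bounds $\sup_n\|u_n\|_{\bbH_n}<\infty$ and $\sup_n\cE^n(u_n,u_n)<\infty$, so by the weak compactness built into the Kuwae--Shioya framework a subsequence of $(u_n)$ converges weakly to some $w\in\bbH$. Applying \emph{Mosco I} along this subsequence, together with the weak lower semicontinuity of the norm and the strong--weak duality $\langle f_n,u_n\rangle_{\bbH_n}\to\langle f,w\rangle_{\bbH}$, gives $\Psi_\lambda(w)\leq\liminf_n\Psi^n_\lambda(u_n)$. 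On the other hand, writing $u:=G_\lambda f$ and choosing via \emph{Mosco II} a recovery sequence $\tilde u_n\to u$ with $\cE^n(\tilde u_n,\tilde u_n)\to\cE(u,u)$, one obtains $\Psi^n_\lambda(\tilde u_n)\to\Psi_\lambda(u)$. Minimality $\Psi^n_\lambda(u_n)\leq\Psi^n_\lambda(\tilde u_n)$ then forces $\Psi_\lambda(w)\leq\Psi_\lambda(u)=\min\Psi_\lambda$, so strict convexity identifies $w=u$ and pins down $\|u_n\|_{\bbH_n}\to\|u\|_{\bbH}$; weak convergence plus convergence of norms upgrades to strong convergence. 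Since every subsequence admits a further subsequence with the same limit, the whole sequence $G^n_\lambda f_n$ converges strongly to $G_\lambda f$.

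For the converse I would use the form--resolvent identity $\cE^n(x,x)=\sup_{\lambda>0}\langle B^n_\lambda x,x\rangle_{\bbH_n}=\lim_{\lambda\to\infty}\langle B^n_\lambda x,x\rangle_{\bbH_n}$, where $B^n_\lambda:=\lambda(I-\lambda G^n_\lambda)$ is the bounded non-negative approximating operator (with spectral symbol $\lambda s/(\lambda+s)\uparrow s$). Strong resolvent convergence propagates through the continuous functional calculus to the strong convergence $B^n_\lambda\to B_\lambda$, and hence to that of the square roots $(B^n_\lambda)^{1/2}\to(B_\lambda)^{1/2}$. Then \emph{Mosco II} is produced by a Yosida-type recovery sequence, a diagonal choice among the strongly convergent families $\lambda G^n_\lambda\hat x_n$ (with $\hat x_n\to x$ strongly), using $\lambda G_\lambda x\to x$ in the form norm as $\lambda\to\infty$; and \emph{Mosco I} follows by combining, at fixed $\lambda$, the weak lower semicontinuity of $v\mapsto\|(B^n_\lambda)^{1/2}v\|$ with a passage to the supremum over $\lambda$.

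The genuine difficulty, and the reason the theorem is non-trivial here, is that the Hilbert spaces vary with $n$: the weak compactness, the strong--weak duality and the weak lower semicontinuity of the norm that are automatic in a fixed space must all be expressed through the connecting maps $\Phi_n$ of Definition \ref{kuwshi} and checked to be compatible with the notions of weak and strong convergence defined there. In the present application this is made harmless by Lemma \ref{topology}, which identifies weak and strong convergence in the spaces $L^2(\nu_n)$ with ordinary weak and strong convergence in the single space $L^2(\nu)$; on that identification the argument collapses to Mosco's original fixed-space theorem, so the only substantive verification is the one already carried out in Lemma \ref{topology}.
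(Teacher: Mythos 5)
The paper does not prove this statement at all: Theorem \ref{moscoks} is imported verbatim from Kuwae and Shioya \cite{kuwshi}, and the text around it only records the definition of convergence of Hilbert spaces and of Mosco convergence needed to state it. So there is no internal proof to compare against; what you have written is a reconstruction of the argument in the cited reference, and in outline it is the correct one. Your forward direction (variational characterisation of $G^n_\lambda f$ as the minimiser of $\Psi^n_\lambda$, a priori bounds from the competitor $0$, weak compactness of bounded sequences in the Kuwae--Shioya sense, Mosco I for the lower bound, Mosco II for the recovery sequence, strict convexity to identify the limit, and the splitting $\cE^n(u_n,u_n)+\lambda\|u_n\|^2\to\cE(u,u)+\lambda\|u\|^2$ to upgrade weak to strong convergence) is exactly the Mosco/Kuwae--Shioya scheme, and your converse via the Yosida approximation $B^n_\lambda=\lambda(I-\lambda G^n_\lambda)$, weak lower semicontinuity of $v\mapsto\|(B^n_\lambda)^{1/2}v\|$, and the monotone supremum over $\lambda$ is the standard one. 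The steps you leave implicit (weak compactness of norm-bounded sequences across varying spaces, stability of the functional calculus, in particular of square roots, under strong convergence of uniformly bounded self-adjoint operators) are genuine lemmas of \cite{kuwshi}, but they are true and your use of them is sound.

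One caveat on your closing paragraph: the claim that, in this paper, the theorem ``collapses to Mosco's original fixed-space theorem'' via Lemma \ref{topology} is accurate only for the regularization scheme of Section \ref{ida}, where all the measures $\nu_n$ are mutually equivalent with uniformly bounded densities. In Section \ref{sec:fda} the spaces $L^2(\pi_n)$ are built over measures supported on finite-dimensional subspaces, the connecting maps $\Phi_n$ are conditional expectations (Proposition \ref{convHfin}), and no identification with a single $L^2(\nu)$ is available; there the full varying-space statement (and its extension, Theorem \ref{moscoks'} of Andres and von Renesse) is genuinely needed. So the general proof you sketch, not the fixed-space reduction, is the one the paper actually relies on.
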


\subsection{Mosco convergence}
\begin{prop}\label{MI}
The Dirichlet form $\cE^n$ on $L^2(\nu_n)$ Mosco-converges to $\cE$ on $L^2(\nu)$.
\end{prop}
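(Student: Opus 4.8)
The plan is to verify the two conditions \emph{Mosco I} and \emph{Mosco II} separately. Throughout I would use Lemma \ref{topology} to translate strong and weak convergence in the varying spaces $L^2(\nu_n)$ into ordinary strong, resp.\ weak, convergence in $L^2(\nu)$, and I would perform all computations against the fixed Gaussian reference measure $\mu$, writing $\cE(\varphi,\varphi)=\frac12\int_H\|\nabla\varphi\|^2\,\rho\,d\mu$ and $\cE^n(\varphi,\varphi)=\frac12\int_H\|\nabla\varphi\|^2\,\rho_n\,d\mu$. Since $c\le\rho,\rho_n\le C$, the norms $\cE_1$, $\cE^n_1$ and $\cE^0_1$ are mutually equivalent, so the domains $D(\cE)$, $D(\cE^n)$ and $D(\cE^0)$ coincide as sets of equivalence classes, and for $u$ in this common domain the gradient $\nabla u\in L^2(\mu;H)$ is one and the same object (the $L^2$-limit of $\nabla\varphi_k$ for $\varphi_k\in\cK$ approximating $u$). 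I will also use repeatedly that $\rho_n\to\rho$ and $\rho_n^{1/2}\to\rho^{1/2}$ $\mu$-a.e.\ with a uniform bound, as already recorded in the proof of Lemma \ref{topology}.

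For \emph{Mosco II}, given $x\in D(\cE)$ I would take the constant recovery sequence $x_n:=x$. By Lemma \ref{topology}(2) this converges strongly, and since $\|\nabla x\|^2\in L^1(\mu)$ while $\rho_n\to\rho$ $\mu$-a.e.\ with $\rho_n\le C$, dominated convergence yields
\[
\cE^n(x,x)=\tfrac12\int_H\|\nabla x\|^2\,\rho_n\,d\mu\;\longrightarrow\;\tfrac12\int_H\|\nabla x\|^2\,\rho\,d\mu=\cE(x,x),
\]
which is exactly \eqref{MoscoII}.

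For \emph{Mosco I} --- the substantial direction --- let $x_n$ converge weakly to $x$, i.e.\ weakly in $L^2(\mu)$ by Lemma \ref{topology}(3). I may pass to a subsequence realizing $L:=\varliminf_n\cE^n(x_n,x_n)$, assuming $L<\infty$. From $\rho_n\ge c$ I get $\cE^0(x_n,x_n)=\frac12\int\|\nabla x_n\|^2\,d\mu\le \frac1c\,\cE^n(x_n,x_n)$, so $(x_n)$ is bounded in $D(\cE^0)$; by the compact embedding of Proposition \ref{compact}, together with the fact that the weak limit is $x$, a further subsequence satisfies $x_n\to x$ strongly in $L^2(\mu)$. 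The gradients $g_n:=\nabla x_n$ are bounded in $L^2(\mu;H)$, hence along a subsequence $g_n$ converges weakly to some $g$; closedness of $\nabla$ then forces $x\in D(\cE^0)=D(\cE)$ with $\nabla x=g$. Finally I would show that $\rho_n^{1/2}g_n$ converges weakly to $\rho^{1/2}g$ in $L^2(\mu;H)$: for bounded $\psi$ one writes $\int\langle\rho_n^{1/2}g_n,\psi\rangle\,d\mu=\int\langle g_n,\rho_n^{1/2}\psi\rangle\,d\mu$ and combines the weak convergence of $g_n$ with the strong convergence $\rho_n^{1/2}\psi\to\rho^{1/2}\psi$ in $L^2(\mu;H)$. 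Weak lower semicontinuity of the norm then gives, along the chosen subsequences (whose form values converge to $L$),
\[
2\,\cE(x,x)=\|\rho^{1/2}g\|^2_{L^2(\mu;H)}\le\varliminf_n\|\rho_n^{1/2}g_n\|^2_{L^2(\mu;H)}=\varliminf_n 2\,\cE^n(x_n,x_n),
\]
which is \eqref{MoscoI}.

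The main obstacle is precisely this last lower-semicontinuity step: because both the integrand $\|g_n\|^2$ and the weight $\rho_n$ vary with $n$, one cannot invoke weak lower semicontinuity of a fixed quadratic functional directly. The device of absorbing the weight into the vector field, $g_n\mapsto\rho_n^{1/2}g_n$, and checking that this rescaled sequence still converges weakly (using the $\mu$-a.e.\ convergence and the uniform bound on $\rho_n$ together with the weak--strong pairing) is the technical heart of the argument; everything else reduces to the compact embedding of Proposition \ref{compact} and the closedness of the gradient.
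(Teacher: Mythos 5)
Your proof is correct, but for the substantial condition \emph{Mosco I} it takes a genuinely different route from the paper. The paper never extracts weak limits of the gradients $\nabla x_n$: instead it tests against exponential cylinder functions $v\in\cK$, uses the integration by parts formula \eqref{ibpf1} (involving the local times of the Brownian bridge) together with the occupation-times formula and the weak convergence $f'_n(a)\,da\rightharpoonup f(da)$ to show $\cE^n(x_n,v)\to\cE(x,v)$, and then concludes by Cauchy--Schwarz duality, $\bigl(\cE^n(x_n,x_n)\bigr)^{1/2}\geq \cE^n(x_n,v)/\bigl(\cE^n(v,v)\bigr)^{1/2}$, taking the supremum over $v$; the case $x\notin D(\cE)$ is handled by a separate argument via lower semicontinuity of $\cE$. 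Your argument --- identify the three domains through the uniformly equivalent norms, extract a weak $L^2(\mu;H)$-limit $g$ of the bounded gradients, use weak closedness of the (closed, hence weakly closed) graph of $\nabla$ to conclude $x\in D(\cE)$ with $\nabla x=g$, and absorb the weight via $\rho_n^{1/2}\nabla x_n\rightharpoonup\rho^{1/2}\nabla x$ plus weak lower semicontinuity of the norm --- is more elementary and more self-contained: it needs neither local times nor smoothness of $f_n$, it disposes of $x\notin D(\cE)$ automatically (finiteness of the liminf forces $x\in D(\cE)$), and in fact your appeal to the compact embedding of Proposition \ref{comp} is redundant, since weak convergence of $(x_n,\nabla x_n)$ in the product space already suffices for the graph argument. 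What the paper's heavier duality method buys is robustness: it is precisely the technique that survives in Section \ref{sec:fda}, where the approximating forms live on the finite-dimensional spaces $H_n$ and gradients cannot be identified with vector fields on a common space, so your ``same gradient'' device is unavailable there. Two small points to tidy up: \emph{Mosco II} also requires a recovery sequence for $x\notin D(\cE)$ (the constant sequence again works, both sides being $+\infty$ since the domains coincide), and Lemma \ref{topology}(3) literally gives weak convergence in $L^2(\nu)$, which you should convert to weak convergence in $L^2(\mu)$ using the two-sided bounds on the densities.
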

\begin{Proof}
The proof of the condition Mosco II is trivial in our case; indeed, for all $x\in D(\cE)$, we set $x_n:=x\in D(\cE^n)$ for all $n\in\N$; by dominated convergence
$\cE(x,x) = \lim\limits_{n} \cE^n (x,x)$. If $x\notin D(\cE)$, then again $x_n:=x\notin 
D(\cE^n)$ satisfies $\cE(x,x) = \lim\limits_{n} \cE^n (x,x)=+\infty$.

Let us prove now condition Mosco I.
We first assume that $u\in\cD(\cE)$. By the integration by parts formula \eqref{ibpf1} we have for any $v\in\cK={\rm Exp}_A(H)$
\[
2\cE(u,v) = - \int_H  u\cdot {\rm Tr}(D^2v)\, d\nu + \int_H u\left(\langle  \cdot,A\nabla v\rangle_H - \int_{\R\times[0,1]} f(da)\, \nabla_r v \, \ell^a(dr)\right) d\nu.
\]
Let $u_n\in L^2(\nu_n)$ a sequence converging weakly to $u$, then we know from Theorem \ref{topology} that $u_n\to u$ weakly in $L^2(\nu)$. By the compactness of the embedding $D(\cE^0)\mapsto L^2(\mu)$ proved in Proposition \ref{comp}, $u_n\to u$ strongly in $L^2(\nu)$. By linearity it is enough to consider $v(x)=\exp(i\langle h,x\rangle_H)$, $h\in D(A)$, $x\in H$. Notice that $\nabla v = i\, v\, h$. Then we can write
\[
\int_{\R\times[0,1]} f(da)\, \nabla_r v(\beta) \, \ell^a(dr) = i\, v(\beta) \,  \int_{\R\times[0,1]} f(da)\, h_r \, \ell^a(dr).
\]
Moreover by the occupation times formula
\[
\langle\nabla v(\beta), f_n'(\beta)\rangle_H = i\, v(\beta) \,  \int_0^1 h_r \, f_n'(\beta_r) \, dr =
i\, v(\beta) \,  \int_{\R\times[0,1]}  h_r \, \ell^a(dr) \, f'_n(a)\, da.
\]
Since $f'_n(a)\, da\rightharpoonup f(da)$, by dominated convergence we obtain
\[
2\cE(u,v) = \lim\limits_{n\to \infty}\left(- \int_H  u^n\cdot {\rm Tr}(D^2v)\,d\nu^n + \int_H u^n(\langle  x, A\nabla v\rangle_H + \langle\nabla v, f_n' \rangle) \, d\nu^n\right).
\]
We can suppose that each $u^n$ is in $\cD(\cE^n)$ (else $\cE^n(u^n,u^n) = +\infty$) so we have for any $v\in\cK\setminus\{0\}$
\[
\varliminf_{n\to+\infty}\Big(\cE^n(u^n,u^n)\Big)^{1/2} \geq \varliminf_{n\to+\infty}\frac{\cE^n(u^n,v)}{\sqrt{\cE^n(v,v)}} = \frac{\cE(u,v)}{\sqrt{\cE(v,v)}}
\]
and by considering the $\sup$ over $v$ we obtain the desired result.

Suppose now that $u\notin \cD(\cE)$ and let $L^2(\nu_n)\ni u^n \to u\in L^2(\nu)$ weakly, then we know from Theorem \ref{topology} that $u_n\to u$ weakly in $L^2(\nu)$. By the compactness of the embedding $D(\cE^0)\mapsto L^2(\mu)$ proved in Proposition \ref{comp}, $u_n\to u$ strongly in $L^2(\nu)$. If
$\liminf\limits_{n\to \infty} \cE^n(u^n,u^n) < +\infty$, then we also have
$\liminf\limits_{n\to \infty} \cE(u^n,u^n) < +\infty$. But since $\cE$ is lower semi-continuous in $L^2(\nu)$, then $\cE(u,u)<+\infty$, which is absurd since we assumed that $u\notin \cD(\cE)$.
\end{Proof}

\subsection{Convergence of stationary solutions}
We denote by $\bbP_{\nu_n}^n$ the law of the stationary solution of \eqref{1n} and by $\bbP_\nu$ the law of the Markov process associated with $\cE$ and started with law $\nu$. We have the following convergence result

\begin{prop}\label{convergence}
The sequence $\bbP_{\nu_n}^n$ converges weakly to $\bbP_\nu$ in $C([0,T]\times[0,1])$.
\end{prop}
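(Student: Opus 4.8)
The plan is to prove weak convergence of the laws $\bbP_{\nu_n}^n$ on the Polish space $C([0,T]\times[0,1])$ in the classical two-step manner: first establish convergence of all finite-dimensional distributions, then prove that the family $\{\bbP_{\nu_n}^n\}_n$ is tight.

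\emph{Finite-dimensional distributions.} By Proposition \ref{MI} the forms $\cE^n$ Mosco-converge to $\cE$, so by Theorem \ref{moscoks} the associated resolvents converge strongly in the sense of Kuwae and Shioya; as shown in \cite{kuwshi} this is equivalent to strong convergence of the semigroups, i.e. $P^n_t u_n\to P_t u$ strongly whenever $u_n\to u$ strongly, for every $t>0$. Since by Lemma \ref{topology} strong convergence here means nothing but convergence in $L^2(\nu)$, and since both processes are stationary with invariant measures $\nu_n$ and $\nu$, each finite-dimensional distribution can be written as an iterated expression
\[
\int \varphi_0 \, P^n_{t_1-t_0}\big(\varphi_1\, P^n_{t_2-t_1}(\varphi_2\,\cdots)\big)\, d\nu_n,
\qquad \varphi_0,\dots,\varphi_k\in C_b(H).
\]
Multiplication by a bounded continuous function is continuous on $L^2(\nu)$ and hence preserves strong convergence, while integration against $\nu_n$ is the pairing with the constant $1\in L^2(\nu_n)$, which converges strongly to $1\in L^2(\nu)$; recall that a strong–strong pairing converges to the limiting inner product. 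Iterating these facts together with the semigroup convergence, and using \eqref{estcH0} to keep the norms uniformly bounded, the displayed expression converges to the corresponding one for $\bbP_\nu$. This gives convergence of all finite-dimensional distributions.

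\emph{Tightness.} It remains to bound, uniformly in $n$, the moments of the space–time increments of $u^n$ and then to invoke Kolmogorov's continuity criterion on $C([0,T]\times[0,1])$. For the spatial regularity I would argue exactly as in the proof of Proposition \ref{exmarkov}: at each fixed time the law of $u^n_t$ is $\nu_n$, and since $c\le d\nu_n/d\mu\le C$ uniformly in $n$, that computation yields $\sup_n \E_{\nu_n}[\,\|\cdot\|_{W^{\eta,p}(0,1)}^p\,]<+\infty$ for every $\eta<1/2$. For the temporal regularity I would write the stationary solution in mild form as the sum of the stationary stochastic convolution, whose law does not depend on $n$ and is classically H\"older of exponent $\tfrac14$ in time, and of the drift term
\[
v^n_t \;=\; -\tfrac12\int_{-\infty}^t e^{(t-r)A} f'_n(u^n_r)\, dr .
\]
The crucial point is that the singular nonlinearity enters only through its total variation: by the occupation times formula, for each $r$,
\[
\int_0^1 |f'_n(u^n_r(\rho))|\, d\rho \;=\; \int_\R |f'_n(a)|\, \ell^a_{r,1}\, da \;\le\; \|f_n\|_{\mathrm{TV}}\,\sup_a \ell^a_{r,1}\;\le\; C\,\sup_a \ell^a_{r,1},
\]
uniformly in $n$, the mollifications $f_n$ being chosen with $\sup_n\|f_n\|_{\mathrm{TV}}<+\infty$. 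Since $\sup_a \ell^a_{r,1}$ has finite moments of all orders under $\nu_n$, uniformly in $n$ by \eqref{estcH0}, the heat-kernel smoothing in the convolution defining $v^n$ produces uniform H\"older moment bounds for $v^n$ in both variables. Combining these with the spatial bounds (compare Lemma \ref{apriori} below) and applying Kolmogorov's criterion gives tightness of $\{\bbP_{\nu_n}^n\}$ in $C([0,T]\times[0,1])$; together with the convergence of finite-dimensional distributions this proves the proposition.

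\emph{Main obstacle.} The hard part is precisely the uniformity in $n$ of the bounds on the drift: the fields $f'_n$ are not bounded as $n\to\infty$, the limiting drift being the distributional spatial derivative of a local time. The resolution, used above, is that $f'_n$ only ever appears integrated, so that by the occupation times formula it is controlled by $\|f_n\|_{\mathrm{TV}}$ against the local time of $u^n_r$; combined with the uniform equivalence \eqref{estcH0} of the measures $\nu_n$ and the finite moments of local times under the Brownian bridge, this tames the singularity. This is exactly the point at which the bounded-variation hypothesis on $f$ is essential.
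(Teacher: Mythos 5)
Your treatment of the finite-dimensional distributions is exactly the paper's: Mosco convergence (Proposition \ref{MI}) plus the Kuwae--Shioya equivalence (Theorem \ref{moscoks}) gives strong convergence of the semigroups, and then the Markov property, the stability of strong convergence under multiplication by a fixed $C_b(H)$ function, and pairing with the constant function $1$ yield convergence of the iterated expressions. No difference there.

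For tightness you take a genuinely different route, and this is where the comparison is instructive. The paper never touches the drift at all: it works in the Polish space $C^{\theta}([0,1])$, reduces the fixed-time compact-containment condition of the Ethier--Kurtz criterion to a moment bound on $\nu_n$ (uniform in $n$ because $c\le d\nu_n/d\mu\le C$), and obtains the time-increment estimate from Lemma \ref{apriori}, whose proof rests on the Lyons--Zheng forward--backward martingale decomposition. That decomposition is available for any stationary reversible Dirichlet-form process, so the singular drift simply never appears; this is also why the same lemma serves verbatim for the limiting process in the proof of Proposition \ref{exmarkov}, where no mild formulation is available. Your argument instead exploits the SPDE structure of the approximating equations: mild form, stationary stochastic convolution, and the key bound $\|f_n'(u^n_r)\|_{L^1(0,1)}\le \|f_n\|_{\mathrm{TV}}\,\sup_a \ell^a_{r,1}$ from the occupation times formula, followed by $L^1\to L^\infty$ heat-kernel smoothing. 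This can be pushed through (the analytic-semigroup estimates and the moments of $\sup_a\ell^a_{r,1}$ under $\nu_n$, uniform in $n$ by the density bounds, are standard), but it carries two costs that the paper's route avoids. First, you must assume $\sup_n\|f_n\|_{\mathrm{TV}}<+\infty$: the paper only requires the approximations to be uniformly bounded and to converge locally uniformly off $\Delta_f$, and such a sequence can perfectly well have exploding variation, so strictly speaking your proof establishes the proposition only for a restricted class of approximating sequences. Second, the smoothing step is only gestured at; making it precise requires fractional powers of $-A$, $L^1\to L^\infty$ bounds with the spectral gap (or a careful treatment on $[0,T]$ with initial datum distributed as $\nu_n$), and a joint space-time Kolmogorov criterion. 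What your approach buys in exchange is a more concrete, equation-level picture of why bounded variation of $f$ is the right hypothesis; what the paper's approach buys is complete indifference to the drift, which is precisely what one wants when the drift is a distributional object.
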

\begin{proof}
Let us first prove convergence of finite-dimensional distributions, i.e.
\[
\lim_{n\to+\infty} \E^n_{\nu_n}(f(X_{t_1},\ldots,X_{t_m})) =  \E_{\nu}(f(X_{t_1},\ldots,X_{t_m})),
\]
for all $f\in C((C([0,1])^m)$. The Mosco convergence of the Dirichlet forms $\cE^n$ provides the strong convergence of the semi-group and, by the Markov property, the convergence of the finite dimensional laws. Indeed let $f$ be in $C((C([0,1])^m)$ of the form $f(x_1, ...,x_m) = f_1(x_1)\cdot...\cdot f_m(x_m)$  then 
\[
\begin{split}
& P^n_{t_1}(f_1\cdot P^n_{t_2-t_1}(f_2\cdot ... (f_{m-1}P^n_{t_m - t _{m-1}}f_m)...)) 
\\ & \to 
P_{t_1}(f_1\cdot P_{t_2-t_1}(f_2\cdot ... (f_{m-1}P_{t_m - t _{m-1}}f_m)...)), \qquad {\rm strongly}.
\end{split}
\]
Then by the Markov property
\[
\begin{split}
& \E^n_{\nu_n}(f(X_{t_1},\ldots,X_{t_m})) = \langle 1, P^n_{t_1}(f_1\cdot P^n_{t_2-t_1}(f_2\cdot ... (f_{m-1}P^n_{t_m - t _{m-1}}f_m)...)) \rangle_{H_n} \\ & \to
\langle 1, P_{t_1}(f_1\cdot P_{t_2-t_1}(f_2\cdot ... (f_{m-1}P_{t_m - t _{m-1}}f_m)...))\rangle_H = \E_{\nu}(f(X_{t_1},\ldots,X_{t_m})).
\end{split}
\]

We need now to prove tightness in $C([0,T]\times[0,1])$.
We first recall a result of \cite[Th. 7.2 ch 3]{ek}.
Let $(P,d)$ be a Polish space, and let $(X_\alpha)_\alpha$ be a family of processes with sample paths in $C([0,T];P)$. Then the laws of $(X_\alpha)_\alpha$ are relatively compact if and only if the following two conditions hold:
\begin{enumerate}
\item For every $\eta >0$ and rational $t\in[0,T]$, there is a compact set $\Gamma_\eta^t \subset P$ such that:
\begin{equation}\label{tension0}
\inf\limits_\alpha \bbP\left( X_\alpha \in \Gamma_\eta^t\right) \geq 1-\eta
\end{equation}
\item For every $\eta,\epsilon >0$ and $T>0$, there is $\delta>0$ such that
\begin{equation}\label{tension}
\sup\limits_\alpha \bbP\left( w(X_\alpha, \delta, T)\geq \epsilon \right) \leq \eta
\end{equation}
 \end{enumerate}
where $w(\omega, \delta, T):=\sup\{d(\omega(r),\omega(s)): r,s\in[0,T],
\, |r-s|\leq\delta\}$ is the modulus of continuity in $C([0,T];P)$.

We consider now, as Polish space $(P,d)$, the Banach space
$C^{\theta}([0,1])$. Since $\bbP^n_{\nu_n}$ is stationary, \eqref{tension0} is reduced to
a condition on $\nu_n$. In fact we have 
\[
\left( \int_H \| x\|^p_{W^{\eta,p}(0,1)} \, d\nu_n \right)^{\frac1p}
\, \leq \,  \left(\frac Cc \int_H \| x\|^p_{W^{\eta,p}(0,1)} \,d\mu \right)^{\frac1p}.
\]
Now, since the Brownian bridge $(\beta_r)_{r\in[0,1]}$ is a Gaussian process with covariance function 
$r\wedge s-rs$, then
\[
\begin{split}
\E\left( \| \beta\|^p_{W^{\eta,p}(0,1)} \right) &  \leq  \E  \left(\|\beta\|_p^p + \int_0^1\int_0^1 \dfrac{|\beta_s - \beta_t|^p }{|s-t|^{p\eta+1}} \,dt\,ds\right)
\\ & \leq C_p\left(1+ \int_0^1\int_0^1 {|s-t|^{p(\frac12-\eta)-1}} \,dt\,ds \right)<  +\infty.
\end{split}
\]
For any $\eta<1/2$, $\theta<\eta$ and $p>1/(\eta-\theta)$ we have
by the Sobolev embedding Theorem that $W^{\eta,p}(0,1)\subset C^{\theta}([0,1])$ with continuous embedding, so that
$\sup_n \int_H \| x\|^p_{C^{\theta}([0,1])} \, d\nu_n<\infty$. By Lemma \ref{apriori} below we obtain existence of a constant $K$ independent of $n$ such that
\[
{\mathbb E}^n_{\nu_n}\left[\left\| X_t-
X_s\right\|^p_{C^{\theta}([0,1])} \right]
 \, \leq \, K \, |t-s|^{\xi}, \qquad \forall \, n\geq 1, \, t,s\in[0,T].
\]
By Kolmogorov's criterion, see \cite[Thm. I.2.1]{reyo},
we obtain that a.s. $w(X^{n},\delta, T)\leq C \, \delta^{\frac{1-\xi}{2p}}$, with $C\in L^p$. Therefore by the Markov inequality, if $\epsilon>0$
\[
\bbP \left(w(X^{n},\delta, T)\geq \epsilon \right)  \leq \bbE \left[ {C^{p}} \right]
\, \delta^{\frac{1-\xi}2}\epsilon^{-p},
\]
and \eqref{tension} follows for $\delta$ small enough.
\end{proof}

\section{Convergence of finite dimensional approximations}\label{sec:fda}
From now on we turn our attention to another problem: convergence in law of finite dimensional approximations of equation \eqref{1}. We want to project, in a sense to be made precise, \eqref{1} onto an equation in a finite dimensional subspace of $H:=L^2(0,1)$. To be more precise, we consider the space $H_n$ of functions in $L^2(0,1)$ which are constant on each interval $[(i-1)2^{-n}, i2^{-n}[$, $i=1,\ldots,2^{n}$ and we endow $H_n$ with the scalar product inherited from $H$. 

Notice that $H_n$ is a linear closed subspace of $L^2(0,1)$, so that there exists a unique orthogonal projector $P_n:L^2(0,1)\mapsto H_n$, given explicitly by
\begin{equation}\label{P_n}
P_n x := 2^n\sum_{i=0}^{2^n-1} \un{[i2^{-n}, (i+1)2^{-n}[} \, \langle
\un{[i2^{-n}, (i+1)2^{-n}[}, x\rangle.
\end{equation}
We call $\mu_n$ the law of $P_n\beta$; then $\mu_n$ is a Gaussian law on $H$ with zero mean and non-degenerate covariance
operator $P_nQP_n$, where $Q$ is the covariance operator of $\mu$, which has been studied in detail in section \ref{cE}. In what follows we write
\[
P_nQP_n = (-2A_n)^{-1}, \qquad A_n:H_n\mapsto H_n.
\]
We also define $ \pi_n$ as
\begin{equation}\label{mesurefinie}
 \pi_n(dx) = \frac 1{Z_n} \,\exp(-F_n(x))\, \mu_n(dx)
= \frac 1{Z_n} \exp\left(-\frac1 {2^n}\sum_{i=0}^{2^n-1} f(x(i)) \right)\, \mu_n(dx).
\end{equation}
where $Z_n:=\mu_n(\exp(-F_n))$ is a normalization constant.

Then, a natural approximation of $\cE$ defined on $H_n$ is given by
the following symmetric bilinear non-negative form 
\begin{equation}\label{Lambda_n}
\Lambda_n(u,v):=\frac 12\int \langle \nabla u, \nabla v\rangle_{H_n} \,  d\pi_n, \quad u , v \in C^1(H_n)
\end{equation}
with reference measure $\pi_n$.  Then we have
\begin{equation}\label{E^f}
\Lambda^n(u,v) = \frac 12\int \langle \nabla (u\circ P_n), \nabla (v\circ P_n)\rangle_H \, \frac 1{Z_n} \,\exp(-F_n\circ P_n)\, d\mu,
 \quad u , v \in C^1(H_n).
 \end{equation}
We write
\begin{equation}\label{f}
f(y) = f_0(y) + \sum_{j=1}^k \alpha_j \, \un{(y\leq y_j)}, \qquad y\in\R
\end{equation}
where $f_0$ is smooth and bounded and $\alpha_j,y_j\in\R$. Clearly, $f$ has a jump in each $y_j$ of respective size $\alpha_j$. We have the following integration by parts formula
\begin{equation}\label{ibpfn}
\begin{split}
\int \partial_h \varphi \, d \pi_n & = -\int \varphi \, \langle x,A_n h\rangle \,  \pi_n(dx)
 +  \int \varphi(x) \, 2^{-n} \sum_{i=0}^{2^n-1} h_i \, f'_0(x(i)) \, \pi_n(dx)
\\ & 
- \int \varphi(x) \sum_{i=0}^{2^n-1} h_i \, \sum_j 2\, \frac{1-e^{-\alpha_j\, 2^{-n}} }
{1+e^{-\alpha_j \, 2^{-n}}} \,  \pi_n(dx \, ; \, x(i)=y_j),
\end{split}
\end{equation}
where we use the notation
\[
 \pi_n(A \, ; \, x(i)=y_j) := \lim_{\varepsilon\downarrow 0} \frac1{2\varepsilon} \,
 \pi_n(A\cap\{|x(i)-y_j|\leq \gep\}).
\]
This suggests that the associated dynamic solves the stochastic differential equation
\begin{equation}\label{fdsde}
dX^i = \frac 12 \left((A_n X)^i -\, f'_0(X^i)\right) dt+ \sum_j \frac{1-e^{-\alpha_j\, 2^{-n}}}{1+e^{-\alpha_j \, 2^{-n}}} \, d\ell^{i,y_j}_t + dw_t^i
\end{equation} 
where $(\ell^{i,a}_t, t\geq 0)$ is the local time of $(X^i(t), t\geq 0)$ at $a$. Then $(X^i_t)_i$ is a vector of interacting skew Brownian motions.
\subsection{Skew Brownian motion}
Let $(X_t)_{t\geq 0}$ be the skew Brownian motion defined in \eqref{skewR} with $|\beta|<1$. Then 
\begin{lemma}\label{skewr}
The process $(X_t)_{t\geq 0}$ is associated with the Dirichlet form
\[
D(u) := \frac12 \int_\R (\dot{u})^2 \, \exp(-\alpha\un{]-\infty,0]}) \, dx
\]
in $L^2(\exp(-\alpha\un{]-\infty,0]}) \, dx)$, where $\alpha\in\R$ is defined by $\frac{1-e^{-\alpha}}{1+e^{-\alpha}}=\beta$.
\end{lemma}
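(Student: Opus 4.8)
The plan is to identify the infinitesimal generator of $D$ by integration by parts, to check via the It\^o--Tanaka formula that the skew Brownian motion $(X_t)_{t\geq 0}$ has the very same generator, and to conclude by the uniqueness of one--dimensional diffusions. Throughout I write $\rho(x):=\exp(-\alpha\un{]-\infty,0]}(x))$ and $m(dx):=\rho(x)\,dx$, so that $\rho\equiv 1$ on $(0,\infty)$ and $\rho\equiv e^{-\alpha}$ on $(-\infty,0)$; since $|\beta|<1$ forces $\alpha\in\R$ finite, both $\rho$ and $\rho^{-1}$ are bounded, and $D$ is a regular, strongly local Dirichlet form on $L^2(m)$ of one--dimensional type.

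First I would integrate by parts. Because $\rho$ is constant on each of $(-\infty,0)$ and $(0,\infty)$, for $u,v$ that are smooth off the origin, continuous at $0$ and decaying at $\pm\infty$ one obtains
\[
D(u,v)=-\frac12\int_{\R\setminus\{0\}} u''\,v\,\rho\,dx+\frac12\,v(0)\bigl(e^{-\alpha}u'(0-)-u'(0+)\bigr).
\]
Hence the generator of $D$ acts as $Lu=\tfrac12 u''$ away from $0$, and a function lies in its domain only if the boundary term vanishes for every admissible $v$, that is iff the gluing condition $u'(0+)=e^{-\alpha}u'(0-)$ holds at the origin. The set $\mathcal{C}$ of functions that are $C^2$ on $(-\infty,0]$ and on $[0,\infty)$, continuous, compactly supported and satisfy this condition is a core for $(L,D(L))$.

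Next I would run the It\^o--Tanaka--Meyer formula for $u\in\mathcal{C}$ along the skew Brownian motion, using $dX_s=dB_s+\beta\,dL^0_s$ from \eqref{skewR} and $\langle X\rangle_s=s$. Since $u'$ jumps at $0$, the formula produces the local--time term $\tfrac12\bigl(u'(0+)-u'(0-)\bigr)L^0_t$, while the drift contributes $\beta\int_0^t u'(X_s)\,dL^0_s=\beta\,\tfrac{u'(0+)+u'(0-)}2\,L^0_t$, the value at $0$ being read in the symmetric sense dictated by the definition \eqref{skewR1} of $L^0$. Adding the two, all contributions carried by $L^0$ collapse to
\[
\frac{L^0_t}{2}\Bigl[(1+\beta)\,u'(0+)-(1-\beta)\,u'(0-)\Bigr],
\]
which vanishes exactly when $u'(0+)=\tfrac{1-\beta}{1+\beta}\,u'(0-)$, i.e. precisely the gluing condition of the previous step, since $\tfrac{1-e^{-\alpha}}{1+e^{-\alpha}}=\beta$ is equivalent to $e^{-\alpha}=\tfrac{1-\beta}{1+\beta}$. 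Therefore, for every $u\in\mathcal{C}$, the process $u(X_t)-u(X_0)-\tfrac12\int_0^t u''(X_s)\,ds=\int_0^t u'(X_s)\,dB_s$ is a martingale, so $X$ solves the martingale problem for $(L,\mathcal{C})$.

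Finally I would conclude. Both the Hunt process associated with the regular Dirichlet form $D$ and the skew Brownian motion solve the martingale problem for $(L,\mathcal{C})$; since $\mathcal{C}$ is a core and the martingale problem for a one--dimensional diffusion of this type is well posed (equivalently, $X$ and the $D$--process share the same scale function, encoded by the ratio $e^{-\alpha}$ of one--sided slopes at $0$, and the same speed measure $m$), the two processes have the same law. In particular $X$ is $m$--symmetric and its Dirichlet form is $D$. The only delicate point is the local--time bookkeeping of the third step: one must correctly account for the jump of $u'$ at $0$ and read the drift term $\beta\int_0^t u'(X_s)\,dL^0_s$ with the symmetric convention, so that the cancellation occurs exactly under the relation $\tfrac{1-e^{-\alpha}}{1+e^{-\alpha}}=\beta$ -- getting these constants right is the whole content of the lemma.
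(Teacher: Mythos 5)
Your proof is correct, but it runs in the opposite direction from the paper's and rests on a different uniqueness theorem. The paper starts from the Dirichlet form: it constructs the Hunt process associated with $D$ (closability and regularity by equivalence with the Brownian form), and then uses the integration by parts formula for the measure $\exp(-\alpha\un{]-\infty,0]})\,dx$ together with the Fukushima decomposition (a Revuz-measure computation identifying the zero-energy part as $\beta$ times the symmetric local time) to show that this process is itself a weak solution of \eqref{skewR}; the identification with the skew Brownian motion then follows from Harrison--Shepp uniqueness for \eqref{skewR}, and the quasi-everywhere caveat inherent in the Dirichlet-form machinery is removed by invoking the explicit transition density of the skew Brownian motion (absolute continuity). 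You instead start from the skew Brownian motion: the integration by parts identifies the generator of $D$ as $\tfrac12 u''$ with the transmission condition $u'(0+)=e^{-\alpha}u'(0-)$, the symmetric It\^o--Tanaka formula shows that $X$ solves the corresponding martingale problem (your bookkeeping of the two local-time contributions and their cancellation under $\frac{1-e^{-\alpha}}{1+e^{-\alpha}}=\beta$ is exactly right, and the symmetric convention is indeed the one forced by \eqref{skewR1}), and you conclude by uniqueness of the martingale problem. Each route buys something: the paper's argument transfers essentially verbatim to the finite-dimensional system of interacting skew Brownian motions in Theorem \ref{skewb}, where the same IBP-plus-Fukushima scheme is reused and where statements for \emph{every} (not just quasi-every) starting point are needed; your argument is lighter on Dirichlet-form machinery and makes the matching of constants completely transparent. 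Two points you should firm up. First, the final appeal to well-posedness: the operator carries a transmission condition, so textbook Stroock--Varadhan uniqueness (continuous coefficients) does not apply directly; the correct justification is the one in your parenthetical, namely Feller's theory of one-dimensional regular diffusions, under which scale function and speed measure determine the law --- this should be made the explicit backbone of the last step rather than an aside. Second, your identification a priori holds only for quasi-every starting point; this suffices for the association claimed in the lemma (an $L^2$ statement, and in one dimension singletons have positive capacity anyway), but it is worth noting, since the paper's absolute-continuity step exists precisely to upgrade to all starting points, a fact used later in the proof of Theorem \ref{skewb}.
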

\begin{proof}
The form $(D, C^1_b(\R))$ is closable in $L^2(\exp(-\alpha\un{]-\infty,0]}) \, dx)$ since it is equivalent to the standard Dirichlet forms associated with the Brownian motion.  By the same argument, the closure of $(D, C^1_b(\R))$ is regular and therefore there exists an associated Hunt process $(X_t)_{t\geq 0}$. We want now to prove that this process is a weak solution of \eqref{skewR}. The following integration by parts
formula
\[
\begin{split}
\int \varphi' \, \exp(-\alpha\un{]-\infty,0]}) \, dx = &
- (1-e^{-\alpha}) \, \varphi(0) \\ = & \ 2\, \frac{1-e^{-\alpha}}{1+e^{-\alpha}} \lim_{\varepsilon\downarrow 0} \frac1{2\varepsilon} \int_{-\gep}^\gep \varphi \, \exp(-\alpha\un{]-\infty,0]}) \, dx,
\end{split}
\]
together with the Fukushima decomposition, shows that $X_t$ is a semimartingale and that it satisfies \eqref{skewR} for quasi-every initial point $X_0=x$, i.e. for all $x$ outside a set $N$ of null capacity. However, we can in fact choose $N=\emptyset$ by noting that the transition semigroup of the skew Brownian motion 
with $-1\leq \beta \leq 1$ has an explicit Markov transition density with respect to the Lebesgue measure  (see III.1.16, VII.1.23, XII.2.16 in \cite{reyo}).
Therefore $X$ satisfies the {\it absolute continuity assumption} and we can use
\cite[Theorem 4.1.2 and formula (4.2.9)]{fot}.
 
\end{proof}

\begin{theorem}\label{skewb}
The form $ \Lambda_n$, defined in \eqref{Lambda_n}, is a regular Dirichlet form in $L^2(\pi_n)$, and the associated Markov process is a weak solution of \eqref{fdsde}. Moreover such solution is unique in law.
\end{theorem}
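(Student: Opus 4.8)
The plan is to establish the three assertions in turn, reproducing in the finite-dimensional space $H_n$ the construction of Sections \ref{secDir}--\ref{secweak}, and then adding an SDE-level uniqueness argument based on the skew Brownian motion. First I would prove that $\Lambda_n$ is a regular Dirichlet form. Since $F_n$ is bounded, the density $\frac1{Z_n}\exp(-F_n)$ of $\pi_n$ with respect to the non-degenerate Gaussian measure $\mu_n$ on $H_n$ is bounded above and below by positive constants; hence, exactly as in \eqref{estcE}, the form $\Lambda_n$ is comparable to the standard gradient (Ornstein--Uhlenbeck) Dirichlet form of $\mu_n$ on $H_n$. Closability of $(\Lambda_n,C^1(H_n))$ then follows as in Lemma \ref{lem2}. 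For regularity I would argue as in the one-dimensional Lemma \ref{skewr}: the Gaussian form on the finite-dimensional space $H_n$ is regular, $C^1_c(H_n)$ is a common core dense in $C_0(H_n)$, and regularity is preserved under equivalence of forms. Quasi-regularity and the existence of an associated Hunt process $(X_t)_{t\geq 0}$ then follow.

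Second, to show that this process solves \eqref{fdsde}, I would apply the Fukushima decomposition \eqref{tipler} to $U^h(x):=\langle x,h\rangle$ for $h\in H_n$, writing $U^h(X_t)-U^h(X_0)=M^{[U^h]}_t+N^{[U^h]}_t$. The martingale part is identified as in Proposition \ref{decomposition3}: computing its Revuz measure through \eqref{quadratic} gives quadratic variation $\|h\|^2\,t$, so by L\'evy's theorem $M^{[U^h]}$ produces the family of independent Brownian motions $w^i$. The drift part $N^{[U^h]}$ is read off from the integration by parts formula \eqref{ibpfn}, whose right-hand side is precisely its Revuz measure. The absolutely continuous terms $-\langle x,A_nh\rangle$ and $2^{-n}\sum_i h_i f_0'(x(i))$ yield the drift $\frac12((A_nX)^i-f_0'(X^i))$, while each singular term $\pi_n(dx\,;\,x(i)=y_j)$ is the Revuz measure of the local time $\ell^{i,y_j}$ of the $i$-th coordinate at level $y_j$; this last identification is exactly the one-dimensional computation of Lemma \ref{skewr} lifted to the coordinate $x(i)$, and produces the skew coefficient $\frac{1-e^{-\alpha_j 2^{-n}}}{1+e^{-\alpha_j 2^{-n}}}$. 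Collecting the terms gives \eqref{fdsde}.

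Finally, for uniqueness in law I would remove the regular part of the drift by a Girsanov transformation. The drift $\frac12(A_nx-f_0'(x))$ has linear growth (with $f_0'$ bounded), so the associated exponential is a true martingale by Bene\v{s}' criterion and the change of measure is legitimate up to the non-exploding horizon $T$. Because the local-time terms are diagonal in $i$ and a Girsanov change of measure preserves the continuous martingale part, and hence the local times, under the new measure the coordinates decouple into the independent equations $d\widetilde X^i=\sum_j\beta_{ij}\,d\ell^{i,y_j}_t+dw^i_t$ with $\beta_{ij}=\frac{1-e^{-\alpha_j 2^{-n}}}{1+e^{-\alpha_j 2^{-n}}}\in(-1,1)$, that is, independent multi-level skew Brownian motions. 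Each such one-dimensional equation has a unique law (Harrison--Shepp \cite{hash} for a single level, together with the explicit transition densities recalled in Lemma \ref{skewr} and \cite{reyo}), so the joint law is determined, and transforming back by Girsanov yields uniqueness in law of \eqref{fdsde}. The main obstacle is this last step: carefully justifying the Girsanov reduction and the weak uniqueness for the decoupled skew diffusions, since the singular local-time drift lies outside the scope of the classical Lipschitz SDE theory. By contrast, the regular-form and weak-solution parts are finite-dimensional transcriptions of the arguments already given in Section \ref{secweak}.
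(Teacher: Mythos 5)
Your proposal follows essentially the same route as the paper: comparability with the finite-dimensional Ornstein--Uhlenbeck form for regularity, the Fukushima decomposition together with the integration by parts formula \eqref{ibpfn} to identify the martingale part (via \eqref{quadratic} and L\'evy's theorem) and the drift/local-time part, and a Girsanov transformation reducing uniqueness in law to that of independent skew Brownian motions via Harrison--Shepp \cite{hash}. The only difference is one of detail: where you assert the identification of the singular Revuz measures $\pi_n(dx\,;\,x(i)=y_j)$ with the coordinate local times by analogy with Lemma \ref{skewr}, the paper carries this out explicitly via Tanaka's formula for $|X^i_t-y_j|$ and a Revuz-measure computation using Theorem 5.4.2 of \cite{fot}.
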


\begin{proof} 
As in the proof of Lemma \ref{skewr},
$ \Lambda_n$ is a regular Dirichlet form with the strong local property because it is equivalent to the Dirichlet form of a finite dimensional Ornstein-Uhlenbeck process. So by \cite{fot} there is a continuous Hunt process associated to $ \Lambda_n$. 

By the integration by parts formula \eqref{ibpfn} and the Fukushima decomposition, 
the Hunt process associated with $ \Lambda_n$ has the following property: the process $(\langle h, X_t\rangle)_{t\geq 0}$ is a semi-martingale
\begin{equation}\label{decomposition1}
\langle h, X^n_t\rangle - \langle h, X^n_0\rangle = M^{h}_t + N^{h}_t
\end{equation}
and the Revuz measure of the bounded-variation CAF $N^{h}$ is
\begin{equation}\label{Revuzmeasure}
\Sigma^{h}(dx) = \frac12 \langle A_n x-f'_0(x), h\rangle \,   \pi_n(dx) +
\sum_{i=0}^{2^n-1} h_i \, \sum_j \frac{1-e^{-\alpha_j\, 2^{-n}} }
{1+e^{-\alpha_j \, 2^{-n}}} \,  \pi_n(dx \, ; \, x(i)=y_j).
\end{equation}
Because of the structure of $\Sigma^{h}$, the process $N^h$ can be written as
\[
N^h_t = \int_0^t \frac12 \langle A_n X_s-f'_0(X_s), h\rangle \, ds + \sum_{i=0}^{2^n-1} h_i \, \sum_j \frac{1-e^{-\alpha_j\, 2^{-n}} }
{1+e^{-\alpha_j \, 2^{-n}}} \, \ell^{i,y_j}_t,
\]
where $\ell^{i,y_j}_t$ is adapted to the natural filtration of $(X_t,t \geq 0)$.
We want now to show that in fact $\ell^{i,y_j}_t$ is adapted to the natural filtration of $(X_t^i,t \geq 0)$. Since $X_t^i$ is a semimartingale, by Tanaka's formula
\begin{equation}\label{tanaka}
|X_t^i-y_j| = |X_0^i-y_j|+ \int_0^t {\rm sign}(X_s^i-y_j)\, d X_s^i + L_t^{y_j}(X^i)
\end{equation}
where $L^{y_j}(X^i)$ is the local time of $X_t^i$ at $y_j$. Since $|\langle e_i, \, \cdot \rangle-y_j|\in\cE^f_n$, then $L^{y_j}(X^i)$ is an additive functional of $X$. Now we can compute the Revuz measure of $L^{y_j}(X^i)$, using theorem 5.4.2 of \cite{fot}. With an integration by parts formula we see that for all $\varphi$ smooth enough:
\[
\begin{split}
 & \cE^f_n(|\langle e_i,\cdot \rangle-y_j|,\varphi) = \frac12\int {\rm sign}(x_i-y_j)\, \partial_i\varphi(x) \, d \pi_n \\  & = - \frac12\int{\rm sign}(x_i-y_j)\, \left((A_nx)^i-f'_0(x_i)\right) \, \varphi(x) \, d \pi_n 
 - \int \varphi(x) \,  \pi_n(dx; x(i)=y_j).
 \end{split}
 \]
 By comparison with \eqref{tanaka}, we see that $ \pi_n(dx; x(i)=y_j)$ is the Revuz measure of $t\mapsto L_t^{y_j}(X^i)$ and therefore by \eqref{Revuzmeasure} the processes $(L_t^{y_j}(X^i), t\geq 0)$ and $(\ell^{i,y_j}_t, t\geq 0)$ are equal up to a multiplicative constant.

We want now to prove uniqueness in law for \eqref{fdsde}. We define the exponential martingale
\[
M_t :=\exp\left( -\int_0^t \frac12\langle A_nX_s-f'_0(X_s), dw_s\rangle -\frac18
\int_0^t \| A_nX_s-f'_0(X_s)\|^2 ds \right).
\]
Then under the probability measure $M_T\cdot\bbP_x$, by the Girsanov theorem the canonical process is a solution in law of
\[
dX^i = \sum_j \frac{1-e^{-\alpha_j\, 2^{-n}}}{1+e^{-\alpha_j \, 2^{-n}}} \, d\ell^{i,y_j}_t + d\hat w_t^i, \qquad t\in[0,T],
\]
where the Brownian motions $(\hat w_t^i, t\geq 0)_i$ are independent; therefore we have reduced to an independent vector of skew-Brownian motions and uniqueness in law holds for such processes by the pathwise uniqueness proved by Harrison and Shepp in \cite{hash}.

Moreover, by the property recalled in the proof of Lemma \ref{skewr}, the transition semigroup of the skew-Brownian motion satisfies the absolute continuity condition and therefore all the above statements are true for all initial conditions.
\end{proof}
\subsection{Convergence of the Hilbert spaces}\label{Convergence of the Hilbert spaces}
\begin{prop}\label{convHfin}
The sequence of Hilbert spaces $(L^2( \pi_n))_n$ converges to $L^2( \nu)$ in the sense of Definition \ref{kuwshi}.
\end{prop}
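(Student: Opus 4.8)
The plan is to produce, in accordance with Definition \ref{kuwshi}, a family of linear maps $\Phi_n:L^2(\nu)\to L^2(\pi_n)$ and to check the single required condition $\|\Phi_n\varphi\|_{L^2(\pi_n)}\to\|\varphi\|_{L^2(\nu)}$ for every $\varphi\in L^2(\nu)$. The first observation is that $\mu_n=(P_n)_*\mu$, and that since $|F|\le\|f\|_\infty$, $|F_n|\le\|f\|_\infty$ with $Z,Z_n$ bounded away from $0$ and $\infty$, the spaces $L^2(\nu)$ and $L^2(\pi_n)$ coincide as sets with $L^2(\mu)$ and $L^2(\mu_n)$ respectively, with uniformly equivalent norms. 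I would therefore take $\Phi_n$ to be the orthogonal projection in $L^2(\mu)$ onto the $\sigma(P_n)$-measurable functions (morally, the projection onto functions of the finitely many coordinates $x(i)$): for $\varphi\in L^2(\nu)=L^2(\mu)$ the conditional expectation $\E_\mu[\varphi\mid\sigma(P_n)]$ is $\sigma(P_n)$-measurable, hence of the form $(\Phi_n\varphi)\circ P_n$ for a unique $\Phi_n\varphi\in L^2(\mu_n)=L^2(\pi_n)$. This map is linear, well defined, and uniformly bounded by conditional Jensen together with the norm equivalences.

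With this choice the norm is explicit, using $\mu_n=(P_n)_*\mu$:
\[
\|\Phi_n\varphi\|_{L^2(\pi_n)}^2=\frac1{Z_n}\,\E_\mu\!\left[\big(\E_\mu[\varphi\mid\sigma(P_n)]\big)^2\,e^{-F_n(P_n\beta)}\right].
\]
The argument then rests on three limits. First, since $H_n\subset H_{n+1}$ and $\bigcup_nH_n$ is dense in $H$, one has $P_n\beta\to\beta$ in $H$ $\mu$-a.s., so the $\sigma$-algebras $\sigma(P_n)$ increase to $\mathcal{B}(H)$ modulo $\mu$-null sets; L\'evy's martingale convergence theorem then gives $\E_\mu[\varphi\mid\sigma(P_n)]\to\varphi$ both in $L^2(\mu)$ and $\mu$-a.s. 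Second, $F_n(P_n\beta)\to F(\beta)$ $\mu$-a.s. (see the next paragraph), so $e^{-F_n(P_n\beta)}\to e^{-F(\beta)}$ boundedly and $Z_n=\E_\mu[e^{-F_n(P_n\beta)}]\to Z$. Combining these through a Vitali-type argument — the factor $e^{-F_n(P_n\beta)}$ is uniformly bounded, while the squares $\big(\E_\mu[\varphi\mid\sigma(P_n)]\big)^2$ are uniformly integrable because they converge in $L^1(\mu)$ — yields
\[
\|\Phi_n\varphi\|_{L^2(\pi_n)}^2\longrightarrow\frac1Z\,\E_\mu\!\left[\varphi^2\,e^{-F(\beta)}\right]=\|\varphi\|_{L^2(\nu)}^2,
\]
which is exactly the desired convergence.

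The only genuinely nontrivial point, and where I expect the main difficulty to lie, is the a.s. convergence $F_n(P_n\beta)\to F(\beta)$, precisely because $f$ is merely bounded with bounded variation and may have jumps. Writing $\bar\beta_i:=2^n\int_{i2^{-n}}^{(i+1)2^{-n}}\beta_r\,dr$ for the value of $P_n\beta$ on the $i$-th dyadic interval, one has $F_n(P_n\beta)=2^{-n}\sum_{i}f(\bar\beta_i)$, a Riemann-type sum to be compared with $F(\beta)=\int_0^1 f(\beta_r)\,dr$. Using the decomposition \eqref{f} of $f$ into a smooth bounded part $f_0$ and finitely many jumps $\alpha_j\un{(\cdot\le y_j)}$, the smooth part converges by uniform continuity of $\beta$ and of $f_0$. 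For a jump term one compares $2^{-n}\sum_i\un{(\bar\beta_i\le y_j)}=\int_0^1\un{(\bar\beta_{i(r)}\le y_j)}\,dr$ with $\int_0^1\un{(\beta_r\le y_j)}\,dr$, where $i(r)$ is the index of the interval containing $r$. Since $\bar\beta_{i(r)}\to\beta_r$ for every $r$ by continuity of $\beta$, and since $\{r:\beta_r=y_j\}$ has zero Lebesgue measure $\mu$-a.s. (the Brownian bridge has a jointly continuous, hence absolutely continuous, occupation density, as already used above), the integrands converge for a.e. $r$, and bounded convergence closes the argument. This local-time input is the substantive ingredient; the remaining steps are soft functional analysis.
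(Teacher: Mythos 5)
Your proof is correct and takes essentially the same route as the paper: the same conditional-expectation maps $\Phi_n$, martingale convergence in $L^2(\mu)$, a.s. convergence of $F_n(P_n\beta)$ to $F(\beta)$ via the $\mu$-a.s. Lebesgue-negligibility of the level sets $\{r:\beta_r=y_j\}$, and a uniform-integrability argument to pass to the limit in the norms (including $Z_n\to Z$). The only cosmetic difference is that you treat the jumps through the explicit decomposition \eqref{f}, whereas the paper invokes continuity of $f$ outside the finite set $\Delta_f$; the two rest on identical facts.
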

\begin{proof}
According to Definition \ref{kuwshi}, we have first to define a map $\Phi_n:L^2( \nu)\mapsto L^2( \pi_n)$.
We consider now the Borel $\sigma$-field $\cB$ on $L^2(0,1)$, completed with all $\mu$-null sets (we use the same notation for the completed $\sigma$-field).

Setting $\bar\beta:=P_n\beta$, let us introduce the filtration $\cF_n:=\sigma( \bar{\beta}_{i2^{-n}}, \, i=1,\ldots,2^{n})$ and the linear map $\Phi_n:L^2(\mu)\mapsto L^2(\mu_n)$ defined as follows: $ \Phi_n(\varphi)=\varphi_n$, where
\[
\varphi_n( \bar{\beta}_{i2^{-n}}, \, i=1,\ldots,2^{-n}) = \E( \varphi(\beta) \, | \, \cF_n).
\]
Then $\varphi_n$ is well defined $\mu_n$-a.e. For any $\varphi\in L^2(\mu)$ the sequence $(\varphi_n)_n$ is a martingale bounded in $L^2(\mu)$, therefore converging a.s. and in $L^2(\mu)$.
Now, since $L^2(\mu)\equiv L^2( \nu)$ and $L^2(\mu_n)\equiv L^2( \pi_n)$ with equivalence of norms (uniformly in $n$),
then the map $ \Phi_n$ is still well defined and $\sup_n\| \varphi_n\|_{L^2( \pi_n)}<+\infty$ for all $\varphi\in L^2( \nu)$. We have
to prove that $\| \varphi_n \|_{L^2( \pi_n)}{\rightarrow} \| \varphi\|_{L^2( \nu)}$ as $n\to+\infty$.

We first prove that $F_n(\bar{\beta}^n)$ converges a.s. to $F(\beta)$, where $\beta^n:=\bar{\beta}_{\lfloor r2^{n}\rfloor}$, $r\in[0,1]$.
We have that
\[
F_n(\beta^n)=2^n\sum_{i=1}^{2^n-1} f_n(\beta_{i2^{-n}}) = \int_0^1 f_n(\beta_{\lfloor r2^{n}\rfloor}) \, dr.
\]
Now  by dominated convergence it is enough to prove that a.s. $f_n(\beta^n_r)\underset{n\to +\infty}{\rightarrow} f(\beta_r)$ for a.e. $r\in[0,1]$. By \eqref{f}, $f$ is continuous outside the finite set $\Delta_f=\{y_j\}$. Moreover $(f_n)_n$ is uniformly bounded and $f_n \underset{n\to +\infty}{\rightarrow} f$ as $n\to+\infty$ locally uniformly in $\R\setminus \Delta_f$.
For all $a\in\R$,
a.s. $\{r\in[0,1]: \beta_r=a\}$ is a compact set with zero Lebesgue measure and
therefore a.s. $U:=\{r\in[0,1]: \beta_r\in \Delta_f\}$ also has zero Lebesgue measure. Therefore
for all $r\in[0,1]\setminus U$,  $f_n(\beta^n_r)\underset{n\to +\infty}{\rightarrow} f(\beta_r)$ and
by dominated convergence $F_n(\beta^n)$ converges a.s. to $F(\beta)$.
In particular, by dominated convergence ${Z_n}=\mu_n(e^{-F_n})=\E(e^{-F_n(\bar{\beta}^n)})$ converges to $Z=\E(e^{-F(\beta)})$.

Now, let us prove that $\| \varphi_n \|_{L^2( \pi_n)}\to \| \varphi\|_{L^2( \nu)}$. Since $Z_n\underset{n\to +\infty}{\rightarrow} Z$, we have to prove that
\[
\E\left( \varphi_n^2(\bar{\beta}^n) \, e^{-F_n(\bar{\beta}^n)} \right) \underset{n\to +\infty}{\rightarrow} \E\left( \varphi^2(\beta) \, e^{-F(\beta)} \right).
\]
We have shown above that $\varphi_n(\bar{\beta}^n)$ converges to $\varphi(\beta)$ in $L^2$. Therefore $(\varphi_n^2(\beta^n))_n$ is
uniformly integrable and so is also $(\varphi_n^2(\bar{\beta}^n) \, e^{-F_n(\bar{\beta}^n)})_n$, since $(e^{-F_n(\bar{\beta}^n)})_n$ is bounded in $L^\infty$. We can then conclude since a u.i. sequence converging a.s. converges in $L^1$.
\end{proof}

\subsection{Mosco convergence}
We want now to prove that $\Lambda^n$ Mosco converges to $\cE$. In \cite[Thm. 3.5]{avr}, Andres and von Renesse have proved that Theorem \ref{moscoks} still holds if one replaces the condition {\it Mosco II} with the following condition {\it Mosco II'}.
\begin{definition}[{\it Mosco II'} ] There is a core $\cK\subset\cD(\cE)$ such that for any $x\in K$ there exists a sequence $x_n\in \cD(\Lambda^n)$ converging strongly to $x$ and such that $\cE(x,x)=\lim\limits_{n\to +\infty} \Lambda^n(x_n,x_n)$.
\label{stronglycv}
\end{definition}
\begin{theorem}[Andres and von Renesse \cite{avr}]\label{moscoks'}
The conditions Mosco I and Mosco II' are equivalent to the Mosco convergence.
\end{theorem}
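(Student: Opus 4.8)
The plan is to reduce the asserted equivalence to the single implication that Mosco I together with Mosco II$'$ yields Mosco II, since the converse is immediate: Mosco convergence is by definition Mosco I and \eqref{MoscoII}, and taking the core $\cK=\cD(\cE)$ (which is trivially a core for $\cE$) shows at once that \eqref{MoscoII} implies the condition of Definition \ref{stronglycv}. For the nontrivial direction, I first observe that the matching lower bound in \eqref{MoscoII} comes for free: if a sequence $x_n\in\bbH_n$ converges strongly to $x$, then it converges weakly, so Mosco I gives $\varliminf_n \Lambda^n(x_n,x_n)\geq \cE(x,x)$. Hence it suffices, for every $x\in\bbH$, to produce a strongly convergent recovery sequence $x_n\to x$ with $\varlimsup_n \Lambda^n(x_n,x_n)\leq\cE(x,x)$.

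Two of the three cases are straightforward. If $x\notin\cD(\cE)$, then $\cE(x,x)=+\infty$ and any strongly convergent sequence—for instance $x_n:=\Phi_n(x)$, which converges strongly to $x$ by the convergence of the Hilbert spaces—satisfies $\varlimsup_n \Lambda^n(x_n,x_n)\leq +\infty=\cE(x,x)$ trivially. If $x\in\cK$, the desired sequence is furnished directly by Definition \ref{stronglycv}. The entire content therefore lies in promoting the recovery property from the core $\cK$ to all of $\cD(\cE)$.

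So fix $x\in\cD(\cE)$. Since $\cK$ is a core, there is a sequence $y^{(k)}\in\cK$ with $\cE_1(y^{(k)}-x,y^{(k)}-x)\to 0$ as $k\to\infty$; in particular $y^{(k)}\to x$ in $\bbH$—hence strongly—and $\cE(y^{(k)},y^{(k)})\to\cE(x,x)$. For each fixed $k$, Definition \ref{stronglycv} provides a sequence $(x_n^{(k)})_n$ with $x_n^{(k)}\to y^{(k)}$ strongly and $\Lambda^n(x_n^{(k)},x_n^{(k)})\to\cE(y^{(k)},y^{(k)})$ as $n\to\infty$. Let $d_s$ denote a metric inducing strong convergence on the total space $\big(\coprod_n \bbH_n\big)\sqcup\bbH$, which exists by \cite{kuwshi}. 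The plan is then to diagonalize: choose integers $n_1<n_2<\cdots$ so that for every $n\geq n_k$ both $d_s(x_n^{(k)},y^{(k)})<1/k$ and $\big|\Lambda^n(x_n^{(k)},x_n^{(k)})-\cE(y^{(k)},y^{(k)})\big|<1/k$ hold, and set $z_n:=x_n^{(k)}$ for $n_k\leq n<n_{k+1}$. Writing $k(n)$ for the index so selected, the triangle inequality gives $d_s(z_n,x)\leq d_s(z_n,y^{(k(n))})+d_s(y^{(k(n))},x)\to 0$, so $z_n\to x$ strongly, while $\varlimsup_n \Lambda^n(z_n,z_n)\leq\varlimsup_k \cE(y^{(k)},y^{(k)})=\cE(x,x)$; combined with the automatic lower bound from Mosco I this establishes \eqref{MoscoII} for $x$.

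I expect the diagonalization to be the main obstacle, precisely because the ambient Hilbert space $\bbH_n$ varies with $n$: one cannot simply take norm limits in a fixed space, and the interplay of the two limits (in $k$ and in $n$) must be controlled intrinsically. The technical facts that make the extraction legitimate are that strong convergence in the sense of Definition \ref{kuwshi} is metrizable—equivalently, is characterized by weak convergence together with convergence of norms—and that the inner product is jointly continuous along strongly convergent sequences; these are established in \cite{kuwshi} and are exactly what allow the estimates above to be carried out across the varying spaces. Once they are in hand the argument is a routine diagonal passage, and the equivalence with Theorem \ref{moscoks} follows.
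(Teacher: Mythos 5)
Your proposal concerns a statement the paper itself never proves: Theorem \ref{moscoks'} is imported from Andres and von Renesse \cite{avr}, so there is no internal proof to compare against. The argument in \cite{avr} does not upgrade Mosco II$'$ to Mosco II at all; it re-examines the Kuwae--Shioya proof that Mosco convergence implies strong resolvent convergence (Theorem \ref{moscoks}) and observes that recovery sequences are needed only for comparison elements in an $\cE_1$-dense set, because the variational functional $v\mapsto \cE(v,v)+\lambda\langle v,v\rangle-2\langle z,v\rangle$ characterizing the resolvent is $\cE_1$-continuous. Your route --- reduce the theorem to showing that Mosco I plus Mosco II$'$ implies Mosco II, via a diagonal extraction over core approximations $y^{(k)}\to x$ in $\cE_1$-norm --- is a genuinely different strategy, and most of it is sound: the trivial converse, the free lower bound along strongly convergent sequences coming from Mosco I, the case $x\notin\cD(\cE)$, and the convergence $\cE(y^{(k)},y^{(k)})\to\cE(x,x)$ are all correct.

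The gap sits exactly where you predicted trouble: the diagonal step. You invoke a metric $d_s$ inducing strong convergence on $\bigl(\coprod_n\bbH_n\bigr)\sqcup\bbH$, ``which exists by \cite{kuwshi}''. Kuwae and Shioya construct no such metric; strong convergence is defined sequentially through the maps $\Phi_n$, and the triangle inequality you use, $d_s(z_n,x)\le d_s(z_n,y^{(k(n))})+d_s(y^{(k(n))},x)$, would require an estimate of the form $\|\Phi_n(y-x)\|_{\bbH_n}\le C\,\|y-x\|_{\bbH}$ uniformly in $n$, and no uniform bound on the $\Phi_n$ is part of Definition \ref{kuwshi} (only pointwise convergence of norms). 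The repair is short and stays within your scheme. Since in Definition \ref{kuwshi} the maps $\Phi_n$ are linear and defined on all of $\bbH$, one checks that $u_n\to x$ strongly if and only if $\|u_n-\Phi_n x\|_{\bbH_n}\to 0$: indeed, for the nontrivial direction, $\|u_n-\Phi_n x\|_{\bbH_n}\le\|u_n-\Phi_n \tilde x_m\|_{\bbH_n}+\|\Phi_n(\tilde x_m-x)\|_{\bbH_n}$, and $\lim_n\|\Phi_n(\tilde x_m-x)\|_{\bbH_n}=\|\tilde x_m-x\|_{\bbH}$ for each fixed $m$. Now choose $n_k$ so that for all $n\ge n_k$ one has, besides your two conditions (now reading $\|x_n^{(k)}-\Phi_n y^{(k)}\|_{\bbH_n}<1/k$ and $|\cE^n(x_n^{(k)},x_n^{(k)})-\cE(y^{(k)},y^{(k)})|<1/k$), also $\|\Phi_n(y^{(k)}-x)\|_{\bbH_n}\le\|y^{(k)}-x\|_{\bbH}+1/k$, which is possible because $y^{(k)}-x$ is a fixed vector for each $k$. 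Then, with $z_n:=x_n^{(k(n))}$, the triangle inequality inside the single space $\bbH_n$ gives $\|z_n-\Phi_n x\|_{\bbH_n}\le\|x_n^{(k(n))}-\Phi_n y^{(k(n))}\|_{\bbH_n}+\|\Phi_n(y^{(k(n))}-x)\|_{\bbH_n}\to 0$, so $z_n\to x$ strongly, and your computation of $\lim_n\cE^n(z_n,z_n)=\cE(x,x)$ closes the proof of Mosco II. In short: the idea is right and different from the cited proof, but the metrization claim is unsupported and must be replaced by this explicit estimate across the varying spaces.
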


\begin{theorem}\label{Mosco}
The Dirichlet form $\Lambda_n$ Mosco-converges to $\Lambda$ as $n\to+\infty$.
\end{theorem}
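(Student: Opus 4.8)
The plan is to invoke Theorem \ref{moscoks'} and reduce the assertion to checking the two conditions \emph{Mosco I} and \emph{Mosco II'} for the forms $\Lambda^n$ on $L^2(\pi_n)$ relative to the limit form $\cE$ on $L^2(\nu)$; the convergence of the underlying Hilbert spaces, together with the connecting maps $\Phi_n$, has already been supplied by Proposition \ref{convHfin}. The entire argument runs parallel to the proof of Proposition \ref{MI}, the only genuinely new ingredient being the passage from the \emph{discrete} integration by parts formula \eqref{ibpfn} to the \emph{continuous} one \eqref{ibpf1}, i.e. the replacement of the discrete surface terms by the local-time term.

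For \emph{Mosco II'} I would use the core $\cK={\rm Exp}_A(H)$ and, for $v\in\cK$, set $v_n:=v\circ P_n$, viewed as an element of $C^1(H_n)$. Strong convergence $v_n\to v$ in the sense of Definition \ref{kuwshi} follows from $P_n\to I$ strongly on $H$ together with the martingale (conditional-expectation) description of $\Phi_n$ in Proposition \ref{convHfin}, taking $\tilde v_k\equiv v$. For the energies it suffices by linearity to treat $v(x)=\exp(i\langle h,x\rangle)$ with $h\in D(A)$; then on $H_n$ one has $\nabla_{H_n}v_n=i\,v_n\,P_nh$, so that
\[
\Lambda^n(v_n,v_n)=\tfrac12\,\|P_nh\|^2\,\pi_n(H_n)=\tfrac12\,\|P_nh\|^2\longrightarrow\tfrac12\,\|h\|^2=\cE(v,v),
\]
since $|v_n|\equiv 1$ and $\pi_n$ is a probability measure; the convergences $F_n(\beta^n)\to F(\beta)$ and $Z_n\to Z$ from Proposition \ref{convHfin} enter only through the strong convergence of $v_n$.

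For \emph{Mosco I}, let $x_n\in L^2(\pi_n)$ converge weakly to $x$; arguing as in Lemma \ref{topology} this means $x_n\rightharpoonup x$ in $L^2(\nu)$. Fix $v\in\cK\setminus\{0\}$ with approximants $v_n$ as above. By Cauchy--Schwarz,
\[
\Lambda^n(x_n,x_n)^{1/2}\ \ge\ \frac{|\Lambda^n(x_n,v_n)|}{\Lambda^n(v_n,v_n)^{1/2}}.
\]
I integrate by parts in the numerator via \eqref{ibpfn}, transferring the gradient onto the smooth factor $v_n$; this rewrites $2\Lambda^n(x_n,v_n)$ as $\int_{H_n}x_n\,G_n\,d\pi_n$, with $G_n$ a bounded multiplier carrying the second-order and Ornstein--Uhlenbeck drift terms, plus the singular contribution
\[
\sum_{i=0}^{2^n-1}\sum_j c_{n,j}\,(P_nh)_i\int_{H_n}x_n\,v_n\ \pi_n(dx\,;\,x(i)=y_j),\qquad c_{n,j}:=\frac{1-e^{-\alpha_j2^{-n}}}{1+e^{-\alpha_j2^{-n}}}.
\]
The multiplier part converges to the corresponding $\int_H x\,G\,d\nu$ using $P_n\to I$ and the weak convergence of $x_n$, exactly as in Proposition \ref{MI}. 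Granting that the singular part converges to $\int_H x\,v\,\big(\int_{\R\times[0,1]}f(da)\,h_r\,\ell^a(dr)\big)\,d\nu$ in accordance with \eqref{ibpf1}, one obtains $\Lambda^n(x_n,v_n)\to\cE(x,v)$ and $\Lambda^n(v_n,v_n)\to\cE(v,v)$, whence $\varliminf_n\Lambda^n(x_n,x_n)^{1/2}\ge \cE(x,v)/\cE(v,v)^{1/2}$. Taking the supremum over $v\in\cK\setminus\{0\}$ and using the variational identity $\cE(x,x)=\sup_{v\in\cK\setminus\{0\}}\cE(x,v)^2/\cE(v,v)$ yields \eqref{MoscoI}, both when $x\in D(\cE)$ and, trivially, when $x\notin D(\cE)$.

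The hard part will be precisely the convergence of the singular term. Since $c_{n,j}=\tfrac12\,\alpha_j\,2^{-n}+o(2^{-n})$, the inner sum $\sum_i (P_nh)_i\,\pi_n(dx\,;\,x(i)=y_j)$, carrying the factor $2^{-n}$, is a space-discretized approximation of the occupation density of the coordinate path at level $y_j$ tested against $h$; summing over $j$ with weights $\alpha_j$ reconstructs $\int_{\R\times[0,1]}f(da)\,h_r\,\ell^a(dr)$ by \eqref{f}. Thus the step amounts to proving convergence of \emph{discrete local times to continuous local times} under the laws $\pi_n$, in a sense strong enough to survive pairing against the merely weakly convergent factor $x_n$; this is the discrete analogue of the occupation-times computation underlying \eqref{ibpf1} and is the technical core of the theorem.
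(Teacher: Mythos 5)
Your skeleton (Theorem \ref{moscoks'}, the core $\cK={\rm Exp}_A(H)$, recovery sequences $v_n=v\circ P_n$, Cauchy--Schwarz duality for Mosco I) coincides with the paper's, but the proposal has a genuine gap exactly where you flag it: the convergence of the singular terms $\pi_n(dx\,;\,x(i)=y_j)$ to the local-time term $\int_{\R\times[0,1]}f(da)\,h_r\,\ell^a(dr)$ is not proved, only ``granted'', and this is the heart of the theorem. The paper never proves that statement either --- it sidesteps it. Since the mollifications $f_n$ built from $\rho(n(y-y_j))$ in \eqref{f} satisfy $f_n\geq f$, the unnormalized measures satisfy $\tilde\pi_n\leq\pi_n$, hence $\tilde\Lambda^n\leq\Lambda^n$; for the lower bound Mosco I it therefore suffices to prove $\liminf_n\tilde\Lambda^n(u_n,u_n)\geq\cE(u,u)$, and the integration by parts for $\tilde\Lambda^n$ involves only the smooth drift $f_n'$, with no singular surface measures at all. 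The passage to the local time then reduces to $L^2$ estimates on $\langle n\rho'(n(\beta^n-y)),P_nh\rangle-\int h\,d\ell^y$, controlled by the modulus of continuity of Brownian local times (Revuz--Yor). If you insist on working with \eqref{ibpfn} directly, you must supply a discrete-local-time convergence result that is available nowhere in the paper; the monotone comparison $\tilde\Lambda^n\leq\Lambda^n$ is precisely the device that makes it unnecessary.

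There is a second gap: in Mosco I you pass to the limit in the ``multiplier part'' using only weak convergence of $x_n$. This fails as stated: the drift factor $\langle\cdot,A_nP_nh\rangle$ is linear, hence unbounded (your $G_n$ is not a bounded multiplier), and it converges only \emph{weakly} in $L^2(\mu)$, so you would be pairing two weakly convergent sequences. The paper first upgrades $u_n$ to strong convergence along a subsequence via Lemma \ref{comp2}: finiteness of $\liminf_n\Lambda^n(u_n,u_n)$ bounds $\cE^0(u_n\circ P_n,u_n\circ P_n)$, and the compact embedding $D(\cE^0)\subset L^2(\mu)$ of Proposition \ref{compact} yields $u_{n_k}\circ P_{n_k}\to u$ in $L^2(\nu)$; only then can one multiply by $v_ne^{-F_n}$ and pair with the weakly convergent drift. (The same compactness upgrade is used in Proposition \ref{MI}, so ``exactly as in Proposition \ref{MI}'' should include it.) Relatedly, your appeal to Lemma \ref{topology} to identify weak convergence is misplaced: that lemma concerns the measures $\nu_n$ on all of $H$ from Section \ref{ida}, whereas here weak convergence of $u_n\in L^2(\pi_n)$ must be read through the conditional-expectation maps $\Phi_n$ of Proposition \ref{convHfin}, i.e.\ it is $(u_n\circ P_n)_n$ that converges weakly in $L^2(\nu)$. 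A minor further point: the ``by linearity'' reduction of Mosco II' to a single exponential is not literal, since the energy is quadratic; one needs the bilinear convergences $\Lambda^n(v_n^{(m)},v_n^{(m')})\to\cE(v^{(m)},v^{(m')})$, which do hold by dominated convergence but must be stated.
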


\begin{lemma}\label{comp2}
Let $u_n\in L^2(\pi_n)$ be a sequence which converges weakly to $u\in L^2(\nu)$, and such that $\liminf_{n}\Lambda^n(u_n,u_n)<+\infty$, then there is a subsequence of $(u_n\circ P_n)_n$ converging to $u$ in $L^2(\nu)$.
\end{lemma}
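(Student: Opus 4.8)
The plan is to convert the energy bound into a bound in the Gaussian Dirichlet space $D(\cE^0)$, to extract a strongly convergent subsequence by the compactness of the embedding $D(\cE^0)\hookrightarrow L^2(\mu)$ (Proposition \ref{compact}), and finally to identify the limit with $u$ through the weak convergence hypothesis. First I would pass to a subsequence, not relabelled, along which $\Lambda^n(u_n,u_n)\le M<+\infty$; in particular $v_n:=u_n\circ P_n$ belongs to $D(\cE^0)$ for every such $n$. Since $f$ is bounded we have $|F_n|\le\|f\|_\infty$ and $Z_n\in[e^{-\|f\|_\infty},e^{\|f\|_\infty}]$, so the density satisfies $0<c\le Z_n^{-1}e^{-F_n}\le C<+\infty$ uniformly in $n$; using the representation \eqref{E^f} this gives
\[
\cE^0(v_n,v_n)=\frac12\int_H\|\nabla v_n\|_H^2\,d\mu\le \frac1c\,\Lambda^n(u_n,u_n)\le\frac Mc.
\]

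Next I would bound the sequence in $L^2(\mu)$. Weakly convergent sequences in the sense of Definition \ref{kuwshi} are necessarily norm bounded, hence $\sup_n\|u_n\|_{L^2(\pi_n)}<+\infty$; since $\mu_n=(P_n)_*\mu$ we have $\|v_n\|_{L^2(\mu)}^2=\|u_n\|_{L^2(\mu_n)}^2\le c^{-1}\|u_n\|_{L^2(\pi_n)}^2$, so $(v_n)_n$ is bounded in $L^2(\mu)$ as well, and therefore bounded in the graph norm $\cE^0_1$ of $D(\cE^0)$. By the compactness of the embedding $D(\cE^0)\hookrightarrow L^2(\mu)$ I can extract a further subsequence along which $v_n\to w$ strongly in $L^2(\mu)$, equivalently in $L^2(\nu)$ by \eqref{estcH}.

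The genuinely delicate step, and the main obstacle, is to show $w=u$: it amounts to matching the two reference measures $\nu$ and $\pi_n$, which differ through the conditional-expectation map $\Phi_n$ and a density correction. For a bounded continuous test function $\psi$, conditioning on $\cF_n=\sigma(\bar\beta_{i2^{-n}},\,i=1,\dots,2^n)$ and using the $\cF_n$-measurability of $u_n(P_n\beta)$ yields the exact identity
\[
\langle v_n,\psi\rangle_{L^2(\nu)}=\langle u_n,\chi_n\rangle_{L^2(\pi_n)},\qquad
\chi_n(y):=\frac{Z_n}{Z}\,e^{F_n(y)}\,\E[\psi(\beta)\,e^{-F(\beta)}\mid P_n\beta=y].
\]
I then claim $\chi_n\to\psi$ strongly in the sense of Definition \ref{kuwshi}. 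This follows because, along the increasing and generating filtration $(\cF_n)_n$, the martingale convergence theorem gives $\E[\psi(\beta)e^{-F(\beta)}\mid\cF_n]\to\psi(\beta)e^{-F(\beta)}$ in $L^2(\mu)$, while $Z_n\to Z$ and $e^{F_n(\bar\beta^n)}\to e^{F(\beta)}$ boundedly and a.s. (the latter convergence already established in the proof of Proposition \ref{convHfin}); a dominated-convergence argument then shows $\chi_n(P_n\beta)\to\psi(\beta)$ in $L^2(\mu)$, which is precisely strong convergence. Feeding the strongly convergent sequence $\chi_n$ into the weak convergence of $u_n$ gives $\langle v_n,\psi\rangle_{L^2(\nu)}\to\langle u,\psi\rangle_{L^2(\nu)}$, whereas the strong convergence $v_n\to w$ gives $\langle v_n,\psi\rangle_{L^2(\nu)}\to\langle w,\psi\rangle_{L^2(\nu)}$. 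Since bounded continuous functions are dense in $L^2(\nu)$, we conclude $w=u$, which is the assertion.
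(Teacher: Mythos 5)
Your proof is correct, and its core is the same as the paper's: you pass to a subsequence with bounded energy, transfer the bound through \eqref{E^f} to get $\sup_k \cE^0(v_{n_k},v_{n_k})<+\infty$ for $v_n:=u_n\circ P_n$, and invoke the compactness of the embedding $D(\cE^0)\hookrightarrow L^2(\mu)$ from Proposition \ref{compact}. (Here you are in fact more careful than the paper, which never verifies the $L^2$-bound on $(v_n)_n$ needed to have a bound in the graph norm $\cE^0_1$; your observation that weakly convergent sequences in the sense of Definition \ref{kuwshi} are norm-bounded fills this in.) Where you genuinely diverge is the identification of the limit $w$ with $u$. The paper does it in one line: since the conditional expectation defining $\Phi_n$ fixes $\cF_n$-measurable functions, one has $\Phi_n(u_n\circ P_n)=u_n$, so the $L^2(\nu)$-convergence $v_{n_k}\to w$ \emph{is} strong convergence of $u_{n_k}$ to $w$ in the Kuwae--Shioya sense; strong convergence implies weak convergence, and uniqueness of weak limits forces $w=u$. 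You instead run this duality by hand: you build the explicit test functions $\chi_n$ (conditional expectation corrected by $\tfrac{Z_n}{Z}e^{F_n}$), prove $\chi_n\to\psi$ strongly using martingale convergence together with $Z_n\to Z$ and $F_n\to F$ from the proof of Proposition \ref{convHfin}, and feed $\chi_n$ into the weak-convergence hypothesis. Both arguments are valid; the paper's is shorter and purely structural, while yours is self-contained and makes visible exactly which probabilistic facts enter. One small remark: your phrase ``which is precisely strong convergence'' quietly uses the same mechanism as the paper's shortcut --- to upgrade $\chi_n\circ P_n\to\psi$ in $L^2(\mu)$ to strong convergence in the sense of Definition \ref{kuwshi} you need that $\Phi_m(\chi_n\circ P_n)=\chi_n\circ P_n$ for $m\geq n$, plus the uniform equivalence of the norms of $L^2(\mu_m)$ and $L^2(\pi_m)$; since that identity is unavoidable, you could have applied it directly to $v_{n_k}$ itself and skipped the computation of $\chi_n$ altogether, which is exactly what the paper does.
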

\begin{proof}
By passing to a subsequence, we can suppose that $\limsup_{n}\Lambda^n(u_n,u_n)<+\infty$.
By \eqref{E^f}, we have that $\cE^0(u_n\circ P_n,u_n\circ P_n)\leq C\Lambda^n(u_n,u_n)$, for some constant $C>0$,
and therefore $\limsup_{n} \cE^0(u_n\circ P_n,u_n\circ P_n)<+\infty$. By Proposition \ref{compact}, the inclusion $D(\cE^0)\subseteq L^2(\nu)$ is compact, so that we can extract a subsequence $v_{n_k}:=u_{n_k}\circ P_{n_k}$ converging in $L^2(\nu)$. This subsequence $u_{n_k}\in L^2(\pi_{n_k})$ converges strongly to $u\in L^2(\nu)$, since $\Phi_n(u_n\circ P_n) = u_n$, by the definition of $\Phi_n$ given in the proof of Proposition \ref{convHfin}. \end{proof}
\begin{proof}[Proof of Theorem \ref{Mosco}]
Let us consider the following regularization of $f$: we fix a function $\rho:\R\mapsto\R$ such that $\rho(x)=1$ for all $x\leq 0$, $\rho(x)=0$ for all $x\geq 1$, $\rho$ is monotone non-increasing and twice continuously differentiable on $\R$ with $0\leq \rho'\leq 1$; then we set
\[
f_n(y) = f_0(y) + \sum_{j=1}^k \alpha_j \, \rho(n(y-y_j)+\un{(\alpha_j<0)}), \qquad y\in\R.
\]
Notice that $f_n\downarrow f$ pointwise as $n\uparrow+\infty$.
Now we define the measure
\[
\tilde \pi_n(dx) = \frac 1{Z_n} \exp(-F_n(x))\, \mu_n(dx)
= \frac 1{Z_n} \exp\left(-\frac1 {2^n}\sum_{i=1}^{2^n-1} f_n(x(i2^{-n}))\right)\, \mu_n(dx);
\]
note that $\tilde \pi_n$ is not normalized to be a probability measure, in fact $\tilde \pi_n\leq \pi_n$ since $f_n\geq f$. We also define the Dirichlet form
\[
\tilde \Lambda^n(\varphi,\psi) := \frac12\int \langle\nabla\varphi,\nabla\psi\rangle \, d\tilde  \pi_n, \qquad \forall \, \varphi,\psi\in D(\Lambda^n).
\]
The form $\tilde \Lambda^n$ is clearly equivalent to $\Lambda^n$ on $D(\Lambda^n)$. Moreover $\tilde \Lambda^n(u,u)\leq \Lambda^n(u,u)$ for all $u\in D(\Lambda^n)$.

Let us show first condition Mosco II'. For $v\in \cK:={\rm Exp}_A(H)$, we have that
\[
v(w) = \sum_{m=1}^k\lambda_k \exp(i\langle w,h_m\rangle)
\]
and we can suppose that $v\ne 0$. We set $v_n:=v_{|H_n}$. Then it is easy to see that $v_n$ converges strongly to $v$; indeed, setting $\tilde v_n:=v\circ P_n$, we have $\Phi_m(\tilde v_n)=v_n$ for $m\geq n$ by construction; therefore
\[
\| \Phi_m(\tilde v_n)-v_m\|_{L^2(\pi_m)} = \| v_n-v_m\|_{L^2(\pi_m)} \leq C \| v\circ P_n-v\circ P_m\|_{L^2(\mu)},
\]
which tends to 0 as $m\to+\infty$ and then $n\to+\infty$. Moreover 
\[
\Lambda^n(v_n,v_n) = \frac 12\int \|P_n\nabla v\|^2_H \, d\pi_n \to \cE(v,v),
\]
so that Mosco II' holds. 

Let us prove now Mosco I. Let $u_n\in L^2(\pi_n)$ be a sequence converging weakly to $u\in L^2(\nu)$; we can suppose that $u\in\cD(\cE)$ and that $\liminf\limits_{n} \Lambda^n(u_n,u_n) < +\infty$;
then by lemma \ref{comp2}, up to passing a subsequence, we can suppose that $u_{n} \to u$ strongly. 

Since $\tilde \Lambda^n\leq \Lambda^n$, we have 
\[
\liminf\limits_{n\to \infty} \Lambda^n(u_n,u_n) \geq \liminf\limits_{n\to \infty} \tilde \Lambda^n(u_n,u_n).
\]
Now for any $v_n\in D(\Lambda^n)$ 
\begin{equation}
\tilde \Lambda^n(u_n,u_n)  \geq  
\frac{\big(\tilde \Lambda^n(u_n,v_n)\big)^2}{\tilde \Lambda^n(v_n,v_n)}.
\end{equation}
Suppose that $v\ne 0$ and $v\in{\rm Exp}_A(H)$ is a linear combination of exponential functions. We set $v_n:=v_{|H_n}$. Then arguing as above we have $\tilde \Lambda^n(v_n,v_n)\to \cE(v,v)$. Now we prove that $\tilde \Lambda^n(u_n,v_n)\to \cE(u,v)$. By linearity, we can suppose that
$v = \exp(i\langle \cdot,h\rangle)$. Integrating by parts we see that
\[
\begin{split}
2\, \tilde \Lambda^n(u_n,v_n) & = -i\int u_n(x) \, v_n(x) \, \langle A_n x-\, f'_n(x), P_nh\rangle \,  \pi_n(dx).
\end{split}
\]
The claim follows if we prove that
\[
\int u_n(x) \, v_n(x) \, \langle n \rho'(n(x-y)), P_nh\rangle \,  \pi_n(dx) \to
\int u(x) \, v(x) \, \langle \ell^y_{\cdot},h\rangle \,  \nu(dx).
\]
Note that, with the notation $\beta^n=P_n\beta$,
\[
\int \varphi(x) \, \langle n \rho'(n(x-y)), h\rangle \,  \pi_n(dx) =
\E( \varphi(\beta^n) \, \langle n \rho'(n(\beta^n-y)), h\rangle ).
\]
Now 
\[
|\langle n \rho'(n(\beta^n-y))-n \rho'(n(\beta-y)), P_nh\rangle|
\leq n \sup_{|r-s|\leq 2^{-n}} |\beta_r-\beta_s| \, \|h\|_\infty.
\]
Moreover, if $h$ has support in $[\gep,1-\gep]$, then
\[
\begin{split}
& \left|\langle n \rho'(n(\beta-y)), h_n\rangle - \int_0^1 h_n \, d\ell^y\right|= 
\left|\int h_n(r) \left(\int n \rho'(n(a-y)) \, (\ell^a - \ell^y)(dr)\, da\right)\right|
\\ & = \left|\int_\gep^{1-\gep} h_n'(r) \left(\int n \rho'(n(a-y)) \, (\ell^a(r) - \ell^y(r))\, da\right) dr\right|
\\ &  \leq \|h'\| \, \sup_{|a-y|\leq 1/n} \sup_{r\in[\gep,1-\gep]} |\ell^a(r) - \ell^y(r)|.
\end{split}
\]
We want now to show that these quantities converge to $0$ in $L^2$ as $n\to+\infty$.
Indeed, since $(\beta_{1-r}, r\in[0,1])$ has the same law as $(\beta_{r}, r\in[0,1])$, we can write
\[
\begin{split}
&\E\left(\sup_{|r-s|\leq 2^{-n}} |\beta_r-\beta_s|^2\right) \leq 2\, \E\left(\sup_{|r-s|\leq 2^{-n},
r,s\leq \frac34} |\beta_r-\beta_s|^2\right) 
\\ & = 2\, \E\left(\sup_{|r-s|\leq 2^{-n}, \,
r,s\leq \frac34} |B_r-B_s|^2 \, \frac{p_{1/4}(B_{3/4})}{p_{1}(0)}\right) \leq
4\, \E\left(\sup_{|r-s|\leq 2^{-n}, \, r,s\leq \frac34} |B_r-B_s|^2 \right)
\\ & \leq C (2^{-n})^{1/2}
\end{split}
\]
by Kolmogorov's continuity criterion for the standard Brownian motion $(B_r)_{r\geq 0}$.
For the other term, we also reduce to a known result on the local time $(\ell^a_t)_{a\in\R,t\geq 0}$ of Brownian motion:
\[
\begin{split}
&\E\left(\sup_{|a-y|\leq 1/n} \sup_{r\in[\gep,1-\gep]} |\ell^a(r) - \ell^y(r)|^2\right) 
\\ & = \E\left(\sup_{|a-y|\leq 1/n} \sup_{r\in[\gep,1-\gep]} |\ell^a(r) - \ell^y(r)|^2 \, \frac{p_{\gep}(B_{1-\gep})}{p_{1}(0)}\right) 
\\ & \leq \gep^{-1/2} \, \E\left(\sup_{|a-y|\leq 1/n} \sup_{r\in[\gep,1-\gep]} |\ell^a(r) - \ell^y(r)|^2\right) 
\leq C (1/n)^{1/2},
\end{split}
\]
see  \cite{reyo} p.225-226. It only remains to prove that
\begin{equation}\label{wal}
\lim_n \int u_n \, v_n \, \langle A_n x-\, f'_0(x), P_nh\rangle \,  \pi_n(dx)
= \int u \,  \, (\langle x, Ah\rangle-\, \langle f'_0(x),h\rangle) \,  \nu(dx).
\end{equation}
The term containing $f'_0(x)$ gives no difficulty; as for 
$\int u_n\, v_n \, \langle  \cdot,A^nP_n h\rangle\, d \pi_n$, we have
\[
\int u_n\, v_n \, \langle  \cdot,A^nP_n h\rangle\, d \pi_n = \frac1{Z_n} \int u_n\, v_n \, \langle  \cdot,A^nP_n h\rangle\, e^{-F_n} \, d\mu_n.
\]
Now, notice that by an integration by part formula, we have for all $g\in C^1_b(H)$
\[
\int g\circ P_n \,  \langle  \cdot,A^nP_n h\rangle\, d\mu =
\int g\,  \langle  \cdot,A^nP_n h\rangle\, d\mu_n = - \int \partial_{P_n h} g \, d\mu_n
\]
and, again by an integration by parts formula, 
\[
- \lim_{n\to+\infty} \int \partial_{P_n h} g \, d\mu_n = - \int \partial_h g \, d\mu = 
\int g\,  \langle  \cdot,A h\rangle\, d\mu.
\]
Moreover 
\[
\int \langle  \cdot,A^nP_nh\rangle^2d\mu = 
\int \langle  \cdot,A^nP_nh\rangle^2d\mu_n = \|P_nh\|^2 \leq \|h\|^2.
\]
%
%
Therefore, the linear functional $L^2(\mu)\ni g\mapsto \int g\circ P_n \,  \langle \cdot,A^nP_n h\rangle\, d\mu$
is uniformly bounded in $n$ and converges on  $C^1_b(H)$, a dense subset in $L^2(\mu)$. By a density argument, this sequence of functionals converges weakly in $L^2(\mu)$.

We recall now that $L^2(\pi_n)\ni u_n$ converges strongly to $u\in L^2(\nu)$.
We want to show that $(u_nv_ne^{-F_n})\circ P_n\to uve^{-F}$ in $L^2(\mu)$. 
Indeed by lemma \ref{comp2}, from any subsequence of $(u_n\circ P_n)_n$ we can extract a sub-subsequence converging to $u$ in $L^2(\nu)$ and $\nu$-almost surely. On the other hand $(v_ne^{-F_n})\circ P_n$ converges pointwise to $ve^{-F}$ and $((v_ne^{-F_n})\circ P_n)_n$ is uniformly bounded, so we conclude with the dominated convergence theorem. Therefore, we obtain that 
\[
\lim_n \int u_nv_ne^{-F_n}\,  \langle  \cdot,A^nP_n h\rangle\, d\mu_n = \int uve^{-F}\,  \langle  \cdot,A h\rangle\, d\mu,
\]
and \eqref{wal} is proved.

Finally we prove that if $\liminf\limits_{n}\Lambda^n(u_n,u_n)<+\infty$, then $u\in\cD(\cE)$. Indeed for all $u_n\in\cD(\Lambda^n)$ we have $u_n\circ P_n\in \cD(\cE)$, moreover $(u_n)_n$ converges weakly to $u$ then $(u_n\circ P_n)_n$ converges weakly to $u$ in $L^2(\nu)$; then, as at the end of the proof of Proposition \ref{MI}, by the compact injection of $\cD(\cE)$ in $L^2(\nu)$ we have that $u\in\cD(\cE)$, which ends the proof.
\end{proof}
\subsection{Convergence in law of stationary processes}
We denote now by $(\bbQ_{\pi_n}^n)_n$ the law of the stationary solution of equation \eqref{fdsde} started with initial law $\pi_n$.
We want to prove a convergence result for $(\bbQ_{\pi_n}^n)_n$ to $\bbP_\nu$, the stationary solution to equation \eqref{1}. We define the space $H^{-1}(0,1)$ as the completion of $L^2(0,1)$ with respect to the Hilbertian norm
\[
\|x\|^2_{H^{-1}(0,1)} := \int_0^1 d\theta \, \langle x,\un{[0,\theta]}\rangle_{L^2(0,1)}^2,
\]
and the linear isometry $J:H^{-1}(0,1)\mapsto L^2(0,1)$ given by the closure of 
\[
H^{-1}(0,1)\subset L^2(0,1)\ni x\mapsto 
Jx:=\langle x,\un{[0,\cdot]}\rangle_{L^2(0,1)}.
\]
\begin{lemma} The sequence $\bbQ_{\pi_n}^n$ converges weakly to $\bbP_\nu$ in $C([0,T]; H^{-1}(0,1))$.
\end{lemma}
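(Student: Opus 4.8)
The plan is to follow the two-step scheme used in the proof of Proposition~\ref{convergence}: first prove convergence of the finite-dimensional distributions via the Mosco convergence of Theorem~\ref{Mosco}, then prove tightness in $C([0,T];H^{-1}(0,1))$ through the Ethier--Kurtz criterion \cite[Th.~7.2 ch~3]{ek}. Passing to the weaker space $H^{-1}(0,1)$ is exactly what makes tightness tractable here, since the approximating processes $X^n$ live in the spaces $H_n$ of step functions and are far too irregular for a tightness argument in a uniform or $L^2$ topology.

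For the finite-dimensional distributions I would argue as in Proposition~\ref{convergence}. By Theorem~\ref{Mosco} the forms $\Lambda^n$ Mosco-converge to $\cE$, so by Theorem~\ref{moscoks} the associated semigroups $P^n_t$ converge strongly, in the Kuwae--Shioya sense attached to the convergence $L^2(\pi_n)\to L^2(\nu)$ of Proposition~\ref{convHfin}, to the semigroup $P_t$ of $\cE$. Taking a product test function $F=f_1\otimes\cdots\otimes f_m$ with each $f_i$ bounded and continuous on $H^{-1}(0,1)$, the nested expression $\langle 1,P^n_{t_1}(f_1\,P^n_{t_2-t_1}(\cdots))\rangle_{L^2(\pi_n)}$ converges to its $\cE$-analogue by stability of strong convergence under multiplication by bounded continuous functions, and the Markov property then gives $\E^n_{\pi_n}[F(X_{t_1},\ldots,X_{t_m})]\to\E_\nu[F(X_{t_1},\ldots,X_{t_m})]$. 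Since the embedding $H_n\hookrightarrow H^{-1}(0,1)$ is continuous and $H^{-1}$-convergence is weaker than $L^2$-convergence, nothing is lost by testing against functions continuous on $H^{-1}(0,1)$.

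For tightness I would check the two conditions of \cite[Th.~7.2 ch~3]{ek} with Polish space $H^{-1}(0,1)$. By stationarity the compact-containment condition \eqref{tension0} reduces to tightness of the laws $\pi_n$ in $H^{-1}(0,1)$; this is easy, because under the isometry $J$ the variable $J(P_n\beta)=\int_0^\cdot(P_n\beta)(r)\,dr$ has derivative $P_n\beta$ bounded in $L^2(0,1)$ by $\|\beta\|_{L^2(0,1)}$, so the laws of $J(P_n\beta)$ are bounded in $W^{1,2}(0,1)$, whose embedding into $L^2(0,1)$ is compact, and the uniform bounds on the densities $\rho_n$ carry the resulting tightness from $\mu_n$ to $\pi_n$. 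For the modulus-of-continuity condition \eqref{tension} I would prove a uniform Kolmogorov-type bound $\E^n_{\pi_n}[\|X_t-X_s\|^{2p}_{H^{-1}(0,1)}]\le K\,|t-s|^{1+\gamma}$ and conclude by \cite[Thm.~I.2.1]{reyo}. Pairing the decomposition \eqref{decomposition1}--\eqref{Revuzmeasure} with $\un{[0,\theta]}$ and integrating in $\theta$, the martingale increment is controlled by Burkholder--Davis--Gundy together with the quadratic variation $\|P_n\un{[0,\theta]}\|^2(t-s)\le\theta\,(t-s)$; the local-time increments $\ell^{i,y_j}_t-\ell^{i,y_j}_s$ have, by the Revuz correspondence under the stationary law, expectation $(t-s)\,\pi_n(x(i)=y_j)$, and the key cancellation is that the weights $\tfrac{1-e^{-\alpha_j2^{-n}}}{1+e^{-\alpha_j2^{-n}}}=O(2^{-n})$ exactly offset the $O(2^n)$ intervals carried by $\un{[0,\theta]}$.

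The main obstacle is precisely this uniform-in-$n$ time-increment estimate in the $H^{-1}$ norm, and within it the control of $\langle A_nX,\un{[0,\theta]}\rangle$: working in $H^{-1}(0,1)$ is what rescues it, since pairing the discrete Laplacian $A_n$ with the step function $\un{[0,\theta]}$ collapses, after a discrete summation by parts, to a discrete first difference, which is bounded uniformly in $n$ by the Dirichlet energy available under $\pi_n$ through \eqref{E^f} and the a priori estimates of Lemma~\ref{apriori}. Once both conditions hold, $(\bbQ^n_{\pi_n})_n$ is relatively compact in $C([0,T];H^{-1}(0,1))$, and together with the convergence of finite-dimensional distributions this pins the unique limit down as $\bbP_\nu$, which does have continuous $H^{-1}(0,1)$-valued paths: by Proposition~\ref{exmarkov} it has paths in $C([0,T]\times[0,1])\subset C([0,T];H^{-1}(0,1))$.
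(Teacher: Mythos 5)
Your finite-dimensional-distribution step is essentially the paper's: Mosco convergence (Theorem \ref{Mosco}) plus Kuwae--Shioya gives strong convergence of semigroups, and one iterates through the nested product. Note, however, that the ``stability of strong convergence under multiplication by bounded continuous functions,'' which you invoke as if it were known, is precisely the substantive point the paper has to prove here: it is established by going back to the definition of $\Phi_m$ as a conditional expectation (proof of Proposition \ref{convHfin}), splitting $\Phi_m(g\cdot\tilde v_n)-g\cdot v_m$ and using that $g\circ P_m\to g$ a.s. together with the contraction property of conditional expectation. Asserting it without proof leaves a real (if repairable) hole.

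The tightness step is where your proposal genuinely breaks down. You plan to pair the Fukushima decomposition \eqref{decomposition1}--\eqref{Revuzmeasure} with $\un{[0,\theta]}$ and estimate drift, local-time, and martingale parts separately. Two problems. First, the drift term: your claim that $\langle A_n x, P_n\un{[0,\theta]}\rangle$ is, after discrete summation by parts, ``bounded uniformly in $n$'' is false. The summation by parts produces a discrete first difference of $x$ at mesh scale $2^{-n}$ multiplied by $2^n$, i.e.\ roughly $2^n\bigl(x((i^*+1)2^{-n})-x(i^*2^{-n})\bigr)$ with $i^*\approx\theta 2^n$; under $\pi_n$ (comparable to the discretized Brownian bridge) such an increment is of order $2^{-n/2}$, so the pairing has $L^2(\pi_n)$-norm of order $2^{n/2}\to\infty$. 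Consequently the stationarity bound $\E\bigl[\,\bigl|\int_s^t\langle A_nX_r,P_n\un{[0,\theta]}\rangle\,dr\bigr|^p\bigr]\le |t-s|^p\,\E_{\pi_n}\bigl[|\langle A_nx,P_n\un{[0,\theta]}\rangle|^p\bigr]$ is useless, and neither \eqref{E^f} nor Lemma \ref{apriori} gives you the cancellation-in-time you would need. Second, your local-time estimate is a first-moment (Revuz) computation, which yields $|t-s|$ and not the $|t-s|^{1+\gamma}$ with high moments, uniform in $n$, that Kolmogorov's criterion requires. The paper's proof is designed exactly to bypass both issues: it sets $\bbS_n:=\bbQ_{\pi_n}^n\circ J^{-1}$, observes that $\pi_n\circ J^{-1}$ satisfies the moment condition \eqref{cater} because $J$ maps $L^2(0,1)$ continuously into $W^{1,2}(0,1)$, and applies Lemma \ref{apriori}, whose proof rests on the Lyons--Zheng forward--backward martingale decomposition. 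That decomposition expresses increments of $\langle h, X_t\rangle$ purely in terms of two martingales with quadratic variation $t\|h\|^2$, so the drift and local-time terms never have to be estimated at all; tightness of $JX^n$ in $C([0,T]\times[0,1])$ then transfers to tightness of $X^n$ in $C([0,T];H^{-1}(0,1))$ through the isometry. Without replacing your direct drift/local-time estimates by an argument of this type, your tightness proof does not go through.
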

\begin{proof} We define $\bbS_n:=\bbQ_{\pi_n}^n\circ J^{-1}$, i.e. the law of $(JX_t^n)_{t\geq 0}$, where $X_t^n$ has
law $\bbQ_{\pi_n}^n$. Since $J$ maps $L^2(0,1)$ continuously into $H^1(0,1)$, we obtain that ${\pi_n}^n\circ J^{-1}$ satisfies condition \eqref{cater} below. Therefore by Lemma \ref{apriori} below, $(\bbS_n)_n$ is tight in $C([0,T]\times[0,1])$ and therefore $(\bbQ_{\pi_n}^n)_n$ is tight in $C([0,T]; H^{-1}(0,1))$.

Let us now prove convergence of finite dimensional distributions. As in the proof of Proposition \ref{convergence}, let $f\in C_b(H^m)$ of the form $f(x_1, ...,x_m) = f_1(x_1)\cdots f_m(x_m)$. By the Markov property, it is enough to prove that
\[
\begin{split}
& P^n_{t_1}(f_1\cdot P^n_{t_2-t_1}(f_2\cdot ... (f_{m-1}P^n_{t_m - t _{m-1}}f_m)...)) 
\\ & \to 
P_{t_1}(f_1\cdot P_{t_2-t_1}(f_2\cdot ... (f_{m-1}P_{t_m - t _{m-1}}f_m)...)), \qquad {\rm strongly}.
\end{split}
\]
Arguing by recurrence, we only need to prove that, if $L^2(\pi_n)\ni v_n\to v\in L^2(\nu)$ strongly, and $g\in C_b(H)$, then $L^2(\pi_n)\ni g\cdot v_n$ converges strongly to $g\cdot v\in L^2(\nu)$. We have
\[
\begin{split}
& \| \Phi_m(g\cdot \tilde{v}_n) - g\cdot v_m\|_{L^2(\pi_m)} \\ & \leq \| \Phi_m(g\cdot \tilde{v}_n - g\circ P_m\cdot \tilde{v}_n)\|_{L^2(\pi_m)} + \| g\cdot (\Phi_m(\tilde{v}_n) - v_m)\|_{L^2(\pi_m)}.
\end{split}
\]
Recalling that $\Phi_m$ is defined in terms of a conditional expectation, see the proof of Proposition \ref{convHfin}, we obtain
\[
\limsup_m \| \Phi_m(g\cdot \tilde{v}_n - g\circ P_m\cdot \tilde{v}_n)\|_{L^2(\pi_m)} \leq \limsup_m C\| (g - g\circ P_m) \tilde{v}_n\|_{L^2(\nu)} = 0,
\]
since the conditional expectation is a contraction in $L^2(\mu)$ and $g\circ P_m$ converges almost surely to $g$ if $m\to+\infty$. Moreover 
\[
\lim_n \limsup_m \| g\cdot (\Phi_m(\tilde{v}_n) - v_m)\|_{L^2(\pi_m)} \leq \|g\|_{\infty}
\lim_n \limsup_m \| \Phi_m(\tilde{v}_n) - v_m\|_{L^2(\pi_m)} = 0
\]
by assumption. Therefore $L^2(\pi_n)\ni g\cdot v_n$ converges strongly to $g\cdot v\in L^2(\nu)$ and
we obtain the convergence in law of the finite dimensional laws.
\end{proof}

\section{A priori estimate}
We prove in this section an estimate which has been used above to prove tightness properties in $C([0,T]\times[0,1])$.
We consider here a probability measure $\gamma$ on $H$  and
Dirichlet form $(\bbD, D(\bbD))$ in $L^2(\gamma)$ such that $C^1_b(H)$ is a core of $ \bbD$ and
\[
\bbD(u,v) = \frac12 \int \langle\nabla u,\nabla v\rangle \, d\gamma, \qquad \forall \, u,v\in C^1_b(H).
\]
Let us define for $\eta\in\,]0,1[$ and $r\geq 1$ the norm $\|\cdot\|_{W^{\eta,r}(0,1)}$, given by 
\[
\|x\|_{W^{\eta,r}(0,1)}^r = \int_0^1 |x_s|^r ds + \int_0^1\int_0^1 \dfrac{|x_s - x_t|^r }{|s-t|^{r\eta+1}} \, dt \, ds.
\]
Then we have the following
\begin{lemma}\label{apriori}
Let $(X_t)_{t\geq 0}$ be the stationary Markov process associated with $\bbD$, i.e. such that the law of $X_0$ is $\gamma$. Suppose that there exist $\eta\in\,]0,1[$, $\zeta>0$ and $p>1$ such that
\[
\zeta>\frac1{1+\frac23\eta}, \qquad p>\max\left\{\frac2{1-\zeta}, \frac1{\eta-\frac32\frac{1-\zeta}\zeta}
\right\},
\]
and
\begin{equation}\label{cater}
\int_H \| x \|_{W^{\eta,p}(0,1)} ^p \, \gamma(dx) = C_{\eta,p} <+\infty.
\end{equation}
Then there exist $\theta\in\,]0,1[$, $\xi>1$ and $K>0$, all depending only on $(\eta,\zeta,p)$, such that
\[
{\mathbb E}\left[\left\| X_t-
X_s\right\|^p_{C^{\theta}([0,1])} \right]
 \, \leq \, K \, |t-s|^{\xi} .
\]
\end{lemma}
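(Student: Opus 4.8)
\emph{The plan.} By stationarity the increment $X_t-X_s$ has the same law as $Z:=X_\tau-X_0$ with $\tau:=|t-s|$, so it suffices to find $\theta,\xi,K$ with $\xi>1$ and $\E[\|Z\|_{C^\theta([0,1])}^p]\le K\,\tau^\xi$. I would rest the whole argument on two a priori controls of $Z$. The first is a \emph{weak, time-quantitative} bound extracted from the form itself: writing $(P_t)_{t\ge0}$ and $L$ for the semigroup and generator of $\bbD$ on $L^2(\gamma)$, stationarity and symmetry give, for every $u\in D(\bbD)$,
\[
\E\big[(u(X_\tau)-u(X_0))^2\big]=2\,\langle u,(I-P_\tau)u\rangle_{L^2(\gamma)}=2\int_0^\tau\bbD(u,P_su)\,ds\le 2\tau\,\bbD(u,u),
\]
since $\bbD(P_su,P_su)\le\bbD(u,u)$. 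Taking $u(x)=\langle x,h\rangle$, for which $\nabla u\equiv h$ and $\bbD(u,u)=\tfrac12\|h\|^2$, this becomes
\[
\E\big[\langle Z,h\rangle^2\big]\le\tau\,\|h\|^2,\qquad h\in H .
\]
The second is a \emph{strong, time-uniform} bound: by \eqref{cater} and stationarity $\E[\|Z\|_{W^{\eta,p}(0,1)}^p]\le 2^{p+1}C_{\eta,p}$, independently of $\tau$.

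\emph{Upgrading the weak bound (the crux).} The decisive step is to promote the second-moment estimate to a $p$-th moment bound in a genuine negative-order norm,
\[
\E\big[\|Z\|_{H^{-1}(0,1)}^p\big]\le C\,\tau^{p/2}.
\]
For this I would use the Fukushima decomposition of $t\mapsto\langle X_t,h\rangle=M^h_t+N^h_t$. By the gradient structure of $\bbD$, exactly as in Proposition \ref{decomposition3}, the martingale part has the \emph{deterministic} bracket $\langle M^h\rangle_t=\|h\|^2t$, so that $\{M^{\un{[0,\theta]}}_t\}$ is a Brownian sheet $W(t,\theta)$; recalling the isometry $J$ and $\|z\|_{H^{-1}}^2=\int_0^1\langle z,\un{[0,\theta]}\rangle^2\,d\theta$, this yields
\[
\|M_\tau-M_0\|_{H^{-1}}^2=\int_0^1\big(W(\tau,\theta)-W(0,\theta)\big)^2\,d\theta ,
\]
whose integrand is a centred Gaussian field of variance $\theta\tau$, so Minkowski's inequality gives $\E[\|M_\tau-M_0\|_{H^{-1}}^p]\le C_p\,\tau^{p/2}$. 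The finite-variation part is of higher order: in the settings where the lemma is applied, the drift Revuz measure is absolutely continuous with respect to $\gamma$ with an $L^p(\gamma)$ density $b^h$ (as established around Proposition \ref{decomposition}), hence $N^h_t=\int_0^t b^h(X_s)\,ds$ and, by stationarity, $\E[|N^h_\tau-N^h_0|^p]\le\tau^p\int_H|b^h|^p\,d\gamma$, negligible against $\tau^{p/2}$. Summing the two contributions yields the displayed bound.

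\emph{Interpolation and conclusion.} With the weak bound $\E[\|Z\|_{H^{-1}}^p]\le C\tau^{p/2}$ and the uniform strong bound $\E[\|Z\|_{W^{\eta,p}}^p]\le C'$, I would interpolate. A Gagliardo--Nirenberg interpolation between $H^{-1}(0,1)$ and $W^{\eta,p}(0,1)$ with weight $\zeta$ gives
\[
\|z\|_{W^{\eta',p}(0,1)}\le C\,\|z\|_{H^{-1}(0,1)}^{\,1-\zeta}\,\|z\|_{W^{\eta,p}(0,1)}^{\,\zeta},\qquad \eta':=\eta-\tfrac32\tfrac{1-\zeta}{\zeta},
\]
the loss $\tfrac32\tfrac{1-\zeta}{\zeta}$ being the cost of passing from the $L^2$-based space $H^{-1}$ to the $L^p$-scale in dimension one; the hypothesis $\zeta>(1+\tfrac23\eta)^{-1}$ is precisely $\eta'>0$. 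Raising to the power $p$ and applying H\"older with exponents $(1-\zeta)^{-1}$ and $\zeta^{-1}$,
\[
\E\big[\|Z\|_{W^{\eta',p}}^p\big]\le\big(\E[\|Z\|_{H^{-1}}^p]\big)^{1-\zeta}\big(\E[\|Z\|_{W^{\eta,p}}^p]\big)^{\zeta}\le K\,\tau^{(1-\zeta)p/2}=:K\,\tau^{\xi},
\]
and $p>2/(1-\zeta)$ gives $\xi=(1-\zeta)p/2>1$. Finally the Sobolev embedding $W^{\eta',p}(0,1)\hookrightarrow C^\theta([0,1])$ with $\theta:=\eta'-1/p$ is available exactly because the second part of the hypothesis, $p>(\eta-\tfrac32\tfrac{1-\zeta}{\zeta})^{-1}=1/\eta'$, forces $\theta>0$; this converts the last display into $\E[\|Z\|_{C^\theta}^p]\le K'\tau^\xi$, as required. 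I expect the genuine difficulty to lie in the upgrade step: making the Gaussian martingale computation rigorous in the $H^{-1}$ norm and, above all, controlling the finite-variation drift's $p$-th moment, for which one must exploit that the drift Revuz measure associated with $\bbD$ is absolutely continuous with an integrable density.
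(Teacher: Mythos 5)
Your overall architecture (a bound of order $\tau^{p/2}$ in $H^{-1}(0,1)$, a time-uniform bound in $W^{\eta,p}(0,1)$ from stationarity, interpolation, then Sobolev embedding) is exactly the paper's, but the step you yourself single out as the crux --- upgrading the second-moment estimate to $\E[\|Z\|^p_{H^{-1}(0,1)}]\le C\tau^{p/2}$ --- is carried out in a way that does not work, and this is a genuine gap. You split $\langle X_t,h\rangle=M^h_t+N^h_t$ by the \emph{forward} Fukushima decomposition and propose to control the zero-energy part through an $L^p(\gamma)$ density $b^h$ of its Revuz measure. First, no such hypothesis is available: the lemma concerns an arbitrary gradient-type Dirichlet form $\bbD$ with the single assumption \eqref{cater}, so importing drift regularity ``from the settings where the lemma is applied'' changes the statement being proved. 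Second, even in the paper's application the scheme fails quantitatively. For the modes $h=e_k$ entering the $H^{-1}$ norm one has $b^{e_k}(w)=\tfrac12\langle w,e_k''\rangle+(\text{local time terms})$, and $\|\langle\cdot,e_k''\rangle\|_{L^p(\nu)}=(k\pi)^2\,\|\langle\cdot,e_k\rangle\|_{L^p(\nu)}\sim k$, so that the mode-by-mode estimate you rely on gives
\[
\Bigl(\E\bigl[\|N_\tau\|_{H^{-1}(0,1)}^p\bigr]\Bigr)^{2/p}\;\le\;\sum_{k\ge1} k^{-2}\bigl(\tau\,\|b^{e_k}\|_{L^p(\nu)}\bigr)^2\;\sim\;\tau^2\sum_{k\ge1}1\;=\;+\infty,
\]
i.e. the crude bound $|N^{e_k}_\tau|\le\int_0^\tau|b^{e_k}(X_s)|\,ds$ throws away exactly the cancellation that makes $N_\tau$ lie in $H^{-1}$ at all; and for $h=\un{[0,\theta]}$, which your Brownian-sheet computation uses, the zero-energy part is not even defined, since its Revuz measure involves $h''$. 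The paper's proof avoids the drift entirely by the \emph{Lyons--Zheng decomposition}: by reversibility and stationarity, $\langle h,X_t-X_0\rangle=\tfrac12 M_t-\tfrac12(N_T-N_{T-t})$ where $M$ is a forward martingale and $N$ a backward martingale, both with bracket $t\|h\|^2_H$; applying Burkholder--Davis--Gundy to the two martingales gives $(\E[|\langle X_t-X_s,e_k\rangle|^p])^{1/p}\le c_p|t-s|^{1/2}$ uniformly in $k$, with no drift to control and no regularity required of $h$. This is the missing idea.

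A secondary slip lies in the interpolation step: the inequality $\|z\|_{W^{\eta',p}}\le C\|z\|_{H^{-1}}^{1-\zeta}\|z\|_{W^{\eta,p}}^{\zeta}$ with $\eta'=\eta-\tfrac32\tfrac{1-\zeta}{\zeta}$ is too strong. Testing it on $z=e_k$ forces $\eta'\le\zeta\eta-(1-\zeta)$, which fails for admissible parameters (e.g. $\eta=0.9$, $\zeta=0.99$). The correct route, which is the paper's, interpolates into $W^{\alpha,d}$ with $\alpha=\zeta\eta-(1-\zeta)$ and $1/d=\zeta/p+(1-\zeta)/2$ and only then applies the Sobolev embedding $W^{\alpha,d}\subset C^{\theta}([0,1])$ for $(\alpha-\theta)d>1$; your hypotheses are precisely those making $\alpha-1/d=\zeta\bigl(\eta'-1/p\bigr)>0$, so the conclusion survives, but with the smaller H\"older exponent $\theta<\zeta(\eta'-1/p)$ rather than your $\eta'-1/p$.
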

\begin{proof}
\medskip\noindent We follow the proof of Lemma 5.2 in \cite{deza}.
We introduce first the space $H^{-1}(0,1)$,
completion of $L^2(0,1)$ w.r.t. the norm:
\[
\|f\|_{-1}^2 \, := \, \sum_{k=1}^\infty k^{-2} \,
|\langle f, e_k \rangle_{L^{2}(0,1)}|^2
\]
where $e_k(r):={\sqrt 2}\sin(\pi kr)$, $r\in[0,1]$, $k\geq 1$, are the eigenvectors of the second derivative with
homogeneous Dirichlet boundary conditions at $\{0,1\}$. Recall that $L^{2}(0,1)=H$, in
our notation.
We denote by $\kappa$ the Hilbert-Schmidt norm of
the inclusion $H \to H^{-1}(0,1)$, which by definition is equal in our case to
\[
\kappa = \sum_{k\geq 1} k^{-2}<+\infty.
\]
We claim that for all $p>1$ there exists $C_p\in(0,\infty)$, depending only on $p$, such that
\begin{equation}\label{estim}
\left( {\mathbb E}\left[
\left\|X_t- X_s\right\|^p_{H^{-1}(0,1)} \right]
\right)^{\frac1p} \, \leq \, C_p \,\kappa\, |t-s|^{\frac12}, \qquad
t,s\geq 0.
\end{equation}
To prove (\ref{estim}),
we fix $T>0$ and use the Lyons-Zheng
decomposition, see e.g. \cite[Th. 5.7.1]{fot}, to write for
$t\in[0,T]$ and $h\in H$:
\[
\langle h,X_t - X_0\rangle_H
\, = \, \frac 12 \,
M_t \, - \, \frac 12 \, (N_T  - N_{T-t}),
\]
where $M$, respectively $N$, is a martingale w.r.t. the natural
filtration of $X$, respectively of $(X_{T-t},
\ t\in[0,T])$.
Moreover, the quadratic variations are both equal to:
$\langle M\rangle_t =\langle N\rangle_t =t \cdot \|h\|^2_H$. By the
Burkholder-Davis-Gundy inequality we can find $c_p\in(0,\infty)$
for all $p>1$ such that: $\left({\mathbb E}\left[|\langle X_t-
X_s, e_k \rangle|^p \right]\right)^{\frac1p} \leq c_p \,|t-s|^{\frac12}$,
$t,s\in[0,T]$, and therefore
\[
\begin{split}
& \left({\mathbb E}\left[\left\|X_t-
X_s\right\|^p_{H^{-1}(0,1)} \right]
\right)^{\frac1p}  \leq  \sum_{k\geq 1} k^{-2}\left({\mathbb E}\left[|\langle X_t-
X_s, e_k \rangle|^p \right]
\right)^{\frac1p} \\
  & \leq  c_p \sum_{k\geq 1} k^{-2} |t-s|^{\frac12} \|e_k \|^{2}_{L^{2}(0,1)}
  \leq  c_p \, \kappa \,|t-s|^{\frac12}, \quad t,s\in[0,T],
\end{split}
\]
and (\ref{estim}) is proved.
By stationarity
\begin{align}\label{estim2}
\nonumber
& \left( {\mathbb E}\left[\left\|X_t-
 X_s\right\|^p_{W^{\eta,p}(0,1)}
\right]\right)^{\frac1p}
\leq  \left( {\mathbb E}\left[\left\|
 X_t\right\|^p_{W^{\eta,p}(0,1)} \right]
\right)^{\frac1p}+\left( {\mathbb E}\left[\left\|
 X_s\right\|^p_{W^{\eta,p}(0,1)} \right]
\right)^{\frac1p}
\\ \nonumber \\  & \qquad
= 2\left( \int_H \| x\|^p_{W^{\eta,p}(0,1)} \, d\gamma \right)^{\frac1p}
\, = \, 2\, (C_{\eta,p})^{1/p}.
\end{align}
By the assumption on $\zeta$ and $p$ it follows that $\alpha:=\zeta\eta-(1-\zeta)>0$ and
\[
\frac{p}2 \, (1-\zeta)>1, \qquad \frac 1d:=\zeta \frac 1p + (1-\zeta)\frac12
<\alpha.
\]
Then by interpolation, see \cite[Chapter 7]{af},
\begin{eqnarray*}
& &
\left( {\mathbb E}\left[\left\| X_t-
X_s\right\|^p_{W^{\ga,d}(0,1)} \right]
\right)^{\frac1p}\leq
\\ \\ & & \leq \,
\left( {\mathbb E}\left[\left\| X_t-
X_s\right\|^p_{W^{\eta,p}(0,1)} \right]
\right)^{\frac \zeta p}\left( {\mathbb E}\left[\left\|
X_t- X_s\right\|^p_{H^{-1}(0,1)}
\right] \right)^{\frac{1-\zeta}p}.
\end{eqnarray*}
Since $\alpha d>1$, there exists
$\theta>0$ such that $(\ga-\theta)d>1$. By the Sobolev embedding,
$W^{\ga,d}(0,1)\subset C^{\theta}([0,1])$ with continuous embedding. 
Then we find that
\[
{\mathbb E}\left[\left\| X_t-X_s\right\|^p_{C^{\theta}([0,1])} \right] \, \leq \, K \, |t-s|^{\xi} 
\]
with $\xi:=\frac{p}2 \, (1-\zeta)>1$ and $K$ a constant depending only on $(\eta,\zeta,p)$.
\end{proof}


\begin{thebibliography}{abc99xys}


\bibitem{af} R. A. Adams, J.J.F. Fournier, {\it
Sobolev Spaces}, Secon Edition,
Academic Press, Elsivier.

\bibitem{asz} L. Ambrosio, G. Savar\'e, L. Zambotti (2009),
{\it Existence and Stability for Fokker-Planck equations
with log-concave reference measure}, Probability Theory and Related Fields,
 145 {\bf 3}, Page 517

\bibitem{avr} S. Andres, M.K. von Renesse, {\it Particle approximation of the Wasserstein diffusion}, J. Funct. Anal. 258 (2010), no. 11.

\bibitem{avr2} S. Andres, M.K. von Renesse, {\it Uniqueness and Regularity Properties for a System of Interacting Bessel Processes via the Muckenhoupt Condition},
(2011), to appear in Trans. Amer. Math. Soc.


\bibitem{sc} S. Cerrai (2001), {\it
Second Order PDE's in Finite and Infinite Dimension},
Lecture Notes in Mathematics 1762, Springer Verlag.

\bibitem{dpz2} G. Da Prato, J. Zabczyk (1996), {\it Ergodicity
for Infinite Dimensional Systems}, London Mathematical
Society Lecture Notes, n.229, Cambridge University Press.

\bibitem{dpz3} G. Da Prato, J. Zabczyk (2002),
{\it Second order partial differential equations in Hilbert spaces},
London Mathematical Society Lecture Note Series, n. 293.

\bibitem{deza} A. Debussche, L. Zambotti (2007),
{\it Conservative Cahn-Hilliard equation with reflection},
Ann. Prob., {\bf 35 (5)}, pp. 1706-1739.

\bibitem{dgz} J.-D. Deuschel, G. Giacomin and L. Zambotti,
\textit{Scaling limits of  equilibrium wetting models
in  (1+1)--dimension}, Probab. Theory Relat. Fields {\bf 132} (2005), 471--500.

\bibitem{dunsch} N.J. Dunford, J.T. Schwartz (1988),
{\it Linear Operators: Spectral Theory: Part II}, Wiley classic Library.

\bibitem{ek} S. N. Ethier, T. G. Kurtz (2005),
{\it Markov Processes: Characterization And Convergence}, 2nd Revised edition, Wiley Series in Probability and Statistics

\bibitem{fot} M. Fukushima, Y. Oshima, M. Takeda (1994),
{\it Dirichlet Forms and Symmetric Markov Processes}, Walter
de Gruyter, Berlin-New York.

\bibitem{funaki} Funaki, Tadahisa {\it Stochastic interface models}, Lectures on probability theory and statistics, 103-274, Lecture Notes in Math., 1869, Springer, Berlin, 2005.

\bibitem{fuol} T. Funaki, S. Olla (2001), {\it Fluctuations
for $\nabla\phi$ interface model on a wall},
Stoch. Proc. and Appl, {\bf 94}, no. 1, 1--27.

\bibitem{hash} Harrison, J. M.; Shepp, L. A. {\it On skew Brownian motion}. Ann. Probab. 9 (1981), no. 2, 309--313.

\bibitem{koles} A.V. Kolesnikov (2006), {\it Mosco convergence of Dirichlet forms in infinite dimensions with changing reference measures},  J. Funct. Anal.  230 {\bf 2}, 382--418.

\bibitem{kuwshi} K. Kuwae, T. Shioya (2003), {\it Convergence of spectral structures: a functional analytic theory and its applications to spectral geometry}, Comm. Anal. Geom. 11 (4) 599--673.


\bibitem{maro} Z. M. Ma, M. R\"ockner  (1992), {\it
Introduction to the Theory of (Non Symmetric) Dirichlet Forms},
Springer-Verlag, Berlin/Heidelberg/New York.


\bibitem{mosco} U. Mosco, {\it Composite media and asymptotic Dirichlet forms}, J. Funct. Anal. 123 (2) (1994) 368-421.


\bibitem{reyo}  Revuz, D., and Yor, M. (1991),
{\it Continuous Martingales and Brownian Motion}, Springer Verlag.

\bibitem{strook} {\sc D.W. Stroock, S.R.S. Varadhan (1997),} {\em Multidimensional diffusion
processes.} Springer Verlag, second~ed.

\end{thebibliography}
 \end{document}